\newcommand{\dt}{\partial_t}
\newcommand{\dn}{\partial_n}
\newcommand{\dz}{\partial_z}
\newcommand{\nam}{\nabla^\mu}
\newcommand{\R}{\mathbb R}
\newcommand{\T}{\mathbb T}
\newcommand{\Z}{\mathbb Z}
\newcommand{\eps}{}
\newcommand{\dsp}{\displaystyle}
\newcommand{\bott}{{\vert_{z=-1}}}
\newcommand{\trace}{{\vert_{Y=\frac{X}{\gamma}}}}
\newcommand{\sech}{{\rm sech}}
\newtheorem{lemm}{Lemma}[section]
\newtheorem{rema}[lemm]{Remark}
\newtheorem{prop}[lemm]{Proposition}
\newtheorem{thm}[lemm]{Theorem}
\newcommand{\abs}[1]{\vert#1\vert}
\newcommand{\Abs}[1]{\Vert#1\Vert}
\newcommand{\BigOh}[1]{\mathcal{O}(#1)}
\numberwithin{equation}{section}
\begin{document}

\title{Water waves over a rough bottom in the  
       shallow water regime}

\author{Walter Craig, David Lannes and Catherine Sulem}

\address{
Department of Mathematics\\
McMaster University \\
Hamilton, ON L8S 4K1, Canada }
\thanks{}
\email{craig@math.mcmaster.ca}

\address{D\'epartement de math\'ematiques et applications\\
Ecole normale sup\'erieure\\
45 rue d'Ulm\\
F-75230 Paris Cedex 05, France}
\thanks{}
\email{David.Lannes@ens.fr}

\address{
Department of Mathematics\\
University of Toronto\\
Toronto, ON M5S 2E4, Canada}
 \thanks{}
\email{sulem@math.toronto.edu}

\subjclass[2000]{76B15, 35Q35}

\keywords{Water waves,  shallow water, rough bathymetry}

\begin{abstract}
This is a study of the Euler equations for free surface water waves in
the case of varying bathymetry, considering the problem in the shallow
water scaling regime. In the case of rapidly varying periodic bottom
boundaries this is a problem of homogenization theory. In this setting
we derive a new model system of equations, consisting of the classical 
shallow water equations coupled with nonlocal evolution equations for 
a periodic corrector term. 
We also exhibit a new resonance phenomenon between surface waves and a 
periodic bottom. This resonance, which gives rise to secular growth of 
surface wave patterns, can be viewed as a nonlinear generalization of the 
classical Bragg resonance.
We justify the derivation of our model with 
a rigorous mathematical analysis of the scaling limit and the
resulting error terms. The principal issue is that the shallow water
limit and the homogenization process must be performed
simultaneously. Our model equations and the error analysis are 
valid for both the two- and the three-dimensional physical problems. 
\end{abstract}

\maketitle


\section{Introduction}
Studies of the Euler equations for free surface water waves are
important to understanding the dynamics of ocean waves. The case of
an idealized flat bottom and the resulting model equations has been
widely studied for many years. The more realistic
situation of varying bathymetry is less well known, despite its 
fundamental importance to studies of ocean wave dynamics in coastal
regions, and there is not a complete consensus as to the appropriate
model equations. In the case of topography there are 
many asymptotic scaling regimes of interest, including long-wave of
modulational hypotheses for the  evolution of the free surface, and
short scale and/or long scale variations in the variable bottom fluid 
boundary. 

In this paper we address the evolution of waves in the
shallow water regime, for which we investigate the effect of the
roughness of the bottom topography. The simplest situation is where
the bottom varies periodically and rapidly with respect to the typical
surface wavelength, a regime which can be described in the context of
homogenization theory. Ideally, wave motion in this regime of rapid
periodic bottom variations is described in terms of a long wave 
{\it effective} component, which is then adjusted by a smaller
multi-scale {\it corrector} at the next order of approximation. In
terms of the initial value problem for this regime, initial
configurations consisting of large scale data with a multi-scale
corrector term are expected to give rise to solutions with the same
character, up to a smaller error term. In this paper we derive a
system of model equations for such multi-scale approximate solutions. 
While other authors have looked at similar situations, as far as we
know this system is new, consisting of a 
version of the shallow water equations  for a mean field or effective 
components of the surface elevation and the fluid velocity, which 
then drive a nonlocal system of two additional equations 
for the evolution of a more rapidly oscillating corrector term. 
Because of the number of other models that have been proposed to describe
this setting, we justify the derivation of our system with a rigorous
analysis, giving error estimates for our approximate solutions.
In cases in which there is a resonance between
the effective velocity and the periodic bottom, the solution of the corrector
equation can exhibit secular growth at a linear rate.This phenomenon can be viewed as a nonlinear generalization of the classical Bragg resonance between the bottom topography and the free surface.
This is a local phenomenon, which may occur when the local Froude 
number is subcritical. In the absence of resonances, our analysis is valid over
time intervals of existence of the effective component.

The literature on models of free surface water waves over a variable
depth is extensive, including the paper of Miles \cite{Miles77} on
its Hamiltonian formulation, and that of Wu \cite{Wu94} on models 
which are valid in long wave scaling regimes. The paper of Rosales \&
Papanicolaou \cite{RP83} studies the long wave regime in which the
bottom is rapidly varying, in the sense that the typical wavelength of
surface waves is taken to be much longer than typical lengthscale of
the variations of the bottom depth. When the latter are periodic, or
more generally when they are given by a stationary ergodic process,
the techniques of homogenization theory are used to obtain effective
long wave model equations. The two most important examples are of
periodic bottom topography, and of topography given by a stationary 
random process. Recently there has been a renewal of interest in this
problem, both from the point of view of modeling of water waves in
asymptotic scaling regimes, and of mathematical analysis. A central
question is the validity of the homogenization approximation, and the
character of the resulting model equations. Following \cite{RP83}, the
paper of Nachbin \& S\o lna \cite{NS03} studies the deformation of
surface waves by the effects of propagation over a rough bottom, taken
in the shallow water scaling regime. In this work the bottom is given
by a random process, and the authors treat both the two- and
three-dimensional cases. The paper of Craig et al \cite{CGNS}
considers large periodic bottom variations, again for dimensions
$n=1+d$ ($d=1,2$), deriving model equations to quite high order of accuracy for
the profiles which describe weak limits of surface waves in the
homogenization limit of the nonlinear long wave regime. Similarly, the
paper of Garnier, Kraenkel \& Nachbin \cite{GKN07} studies the long
wave scaling limits of water waves over a periodic bottom (for $d=1$),
deriving an effective KdV equation, for which they describe the
dependence of the coefficients of nonlinearity and dispersion on the
topography of the bottom. This study continues in Garnier, Grajales \&
Nachbin  \cite{GGN07} in the case of random bathymetry. 
There are other studies of surface wave propagation over periodic
bathymetry, that focus on regimes which are not homogenization
theoretic. Namely, there is the case in which the typical wavelength
of surface waves is comparable or smaller than the typical bottom
variations. Among these, Choi \& Milewski \cite{CM02} consider
periodic solutions of systems of KdV equations which are coupled
through resonant interactions with a periodic bottom. The paper 
by Nahoulima et al \cite{NZPTK05} considers shallow water theory with
and without dispersive corrections, for a periodic and piecewise 
constant bottom of very long wavelength. 

The paper of Grataloup \& Mei \cite{GM03} considers the propagation of
modulational solutions over a random seabed in dimension $d=1$, which 
is extended to the case $d=2$ in Pihl, Mei \& Hancock \cite{PMH02}. In
this work, the typical wavelengths represented in the surface and the
topography are comparable, and the effort is to derive envelope
equations for the free surface and to understand its statistical
properties, given the ensemble of realizations of the random
bathymetry. 

There is also a long history of study of resonant interaction between water surface waves
with periodic bottom. The paper of  Mei \cite{MeiCC} gives the theory of linear Bragg 
resonances between surface waves and bottom variations of the same spatial scale. This
is extended to nonlinear resonances in Liu \& Yue \cite{LiuYue}.
The difference between these references and our work is that, in the latter,
 short scales perturbations of the free surface are generated by interaction of the bottom with long
waves on the free surface, a feature typical of homogenization theory.

None of the references above, however, give a mathematical
theorem which justifies on a rigorous basis the model equations that
are derived. After the derivation of the shallow water model in the
present paper, the second main point of our work is to provide a 
rigorous justification of this derivation. There 
is a history of results on the mathematical verification of the 
model equations for free surface water waves, starting in fact 
with the papers of Ovsjannikov~\cite{Ov74,Ov76} and 
Kano \& Nishida~\cite{KN79} which give existence theorems 
for the full water wave equations and as well a proof of 
convergence of solutions in the shallow water scaling limit. 
In both cases the bottom is assumed to be flat, and the authors work
with initial data given in spaces of analytic functions. Results on 
long wave scaling limits of the water waves problem in dispersive
regimes include Craig~\cite{Craig} and Schneider \& Wayne~\cite{SW00}
and their treatment of the two-dimensional problem, a long-time
existence theory, and the Boussinesq and KdV limits in Sobolev 
spaces. More recently, the paper of Lannes~\cite{LJAMS} gives 
an existence theory for solutions of the water wave problem for 
fluid domains with smooth variable bathymetry, and the further paper 
of Alvarez--Samaniego \& Lannes~\cite{AL} gives rigorous results 
on a number of long wave scaling limits of the same problem (see also Iguchi ~\cite{Iguchi} and earlier papers of  Bona et al \cite{BCL} and Chazel\cite{Chazel}),
all papers working with Sobolev space initial data. In the
context of this body of work, what distinguishes the present paper 
is the oscillatory nature of the bottom boundary of the fluid domain,
which has the implications that the solutions themselves are
oscillatory,  and principally, that the
homogenization theory Ansatz giving the form of solutions 
must be justified. Our analysis has several features in common 
with the results of \cite{CSS} on the justification of the nonlinear 
Schr\"odinger equation and the Davey -- Stewartson system as envelope 
equations for modulation theory, the most important of which being 
that the principal theorem is a consistency result rather than a 
full fledged limit theorem for solutions. Nonetheless, as far as we
know this is the first rigorous result which justifies with a 
rigorous analytic argument the application of homogenization 
theory to the water wave problem with rapidly varying periodic 
bathymetry. In the present framework, precise error estimates are needed because  the shallow water limit and the homogenization limit do not commute. More precisely, shallow water expansions  are derived for slowly varying bottoms, neglecting some terms that are relevant for rough bottoms. 
Conversely, homogenization limits are usually performed with  low regularity estimates on solutions,
that place them outside of the regime of high order shallow water asymptotics (see for instance \cite{Chupin} for a recent homogenization result at leading order for the Dirichlet-Neumann operator). The point of our work and the source of many of its technical difficulties is that we perform the homogenization and shallow water limit simultaneously, thereby retaining  the full complement of relevant terms from the original water waves equations.
The (local) effects of this infinity of terms neglected in previous studies add up to create the nonlocal effects present in our approximation.


\subsection{General setting}
The time-dependent fluid domain consists of the fluid domain
$\Omega(b,\zeta)=\{(x,z)\in \R^{d+1},-H_0+b(x)<z<\zeta(x,t)\}$
in which the fluid velocity is represented by the gradient of 
a velocity potential $\Phi$. The dependent variable  $\zeta(x,t)$ 
denotes the surface elevation and $b(x)$ denotes the variation 
of the bottom of the fluid domain from its mean value. We use 
the Hamiltonian formulation due to Zakharov~\cite{Z} and 
Craig \& Sulem~\cite{CS93} in the form of a coupled system 
for the surface elevation $\zeta$ and the trace of the 
velocity potential at the surface
$\psi=\Phi_{\vert_{z=\zeta}}$, namely 
\begin{equation}\label{eq1}	
	\left\lbrace
	\begin{array}{l}
	\dsp \dt\zeta - G[\zeta,b]\psi = 0 ~,   \\
	\dsp \dt \psi+g\zeta
	+\frac{1}{2}\vert\nabla\psi\vert^2
	-\frac{(G[\zeta,b]\psi+\nabla\zeta\cdot\nabla\psi)^2}
	{2(1+\vert\nabla\zeta\vert^2)} = 0 ~.
	\end{array}\right.
\end{equation}
The quantity $G[\zeta,b]\cdot$ is the Dirichlet-Neumann operator, 
defined by
\begin{equation}\label{eq2}
	G[\zeta,b]\psi =
	\sqrt{1+\vert\nabla\zeta\vert^2}\dn\Phi_{\vert_{z=\zeta}} ~,
\end{equation}
where $\Phi$ is the solution of the elliptic boundary value problem
\begin{equation}\label{eq2.5}
	\left\lbrace
	\begin{array}{l}
	\dsp \Delta\Phi+\dz^2\Phi=0 \quad \mbox{ in } \quad \Omega(b,\zeta)~,  \\
	\Phi_{\vert_{z=\zeta}} = \psi,\qquad \dn\Phi_{\vert_{z=-H_0+b }}=0 ~.
	\end{array}\right. 
\end{equation}

Writing the equations of evolution in terms of nondimensional 
variables, different asymptotic regimes of this problem are 
identified by scaling regimes of the associated dimensionless
parameters. Denote by $A$ the typical amplitude of surface waves, 
with $\lambda$ their  typical wavelength. Similarly let $B$ denote the
typical amplitude of the variations of the bottom from its mean value 
$H_0$, with $\ell$ their typical wavelength. From these quantities 
we define the dimensionless variables as follows:
\begin{equation}\label{eq5}
	\begin{array}{llll}
	\dsp x = \lambda X' ~, &
	z = H_0 z' ~, &
	t = \frac{\lambda}{\sqrt{gH_0}}t' ~,  \\
	\dsp \zeta = A\zeta' ~,   &
	\Phi = \frac{A}{H_0}\lambda\sqrt{gH_0}\Phi' ~,  &
	b = B b'(\frac{x}{\ell}) ~.
	\end{array}
\end{equation}
Stemming from this change of variables there are four dimensionless parameters, 
\begin{equation}\label{eq6}
	\mu=\frac{H_0^2}{\lambda^2}~,\qquad \varepsilon=\frac{A}{H_0}~,\qquad 
	\beta=\frac{B}{H_0}~, \qquad \gamma = \frac{\ell}{\lambda} ~.
\end{equation}
Our analysis is concerned with the shallow water regime $\mu\ll
1$. The relative amplitude of solutions is governed by $\varepsilon$. 
In addition to this, the relative amplitude of the bathymetry is given 
by $\beta$, the parameter $\gamma$ determines the relative length of
bottom perturbations with respect to the typical wavelength of surface
waves, and the bottom variations $b'(\cdot)$ are assumed to be
$2\pi$-periodic in all variables. We consider relatively large
amplitude surface waves, meaning that no smallness assumption 
is made on $\varepsilon$.
As usual for this regime, we therefore set $\varepsilon=1$ 
for the sake of simplicity.
With regard to the bottom variations, we set 
\begin{equation}\label{assumparam}
 \beta = \sqrt{\mu} = \gamma \ll 1 ~. 
\end{equation}
The fact that $\beta = \gamma$ corresponds to small bathymetry slope
in this regime, while the {\it roughness strength} is $\rho :=
\sqrt{\mu}/\gamma = 1$. 
For clarity of notation we drop this `prime' notation for the
remainder of the paper.

\subsection{Presentation of  results}

The first result of this paper is the construction of an approximate
solution $(\zeta_a,\psi_a)$ of the water waves problem 
in the form of the Ansatz
\begin{align}\label{Eqn:BasicAnsatz1}
  &  \eps \zeta_a=  \eps \zeta_0 (X,t) + \gamma  \zeta_1(X,X/\gamma,t/\gamma) \\
     \label{Eqn:BasicAnsatz2}
  & \psi_a = \psi_0(X,t) +  \gamma^2 \psi_1(X,X/\gamma,t/\gamma) ~.
\end{align}

\begin{rema}\label{remansatz}
The  factor of $\gamma^2$ in front of the corrector $\psi_1$is natural; indeed, this yields a $O(\gamma)$ corrector for the velocity, which is the physical relevant quantity.
\end{rema}

Setting $V_0 = \nabla\psi_0$ and $h_0 = 1+\eps\zeta_0$, we show that 
$(\zeta_0,V_0)$ satisfies the classical shallow water system with flat bottom,
\begin{equation} \label{shallow-water-eqs}
	\left\lbrace	\begin{array}{l}
    \partial_t \zeta_0 + \nabla\cdot (h_0V_0)=0 ~,\\
    \partial_t V_0 + \nabla\zeta_0 + (V_0\cdot\nabla)V_0 =0 ~,
	\end{array}\right.
\end{equation}
while the corrector terms $(\zeta_1,\psi_1)$ satisfy a linear 
nonlocal coupled system of equations in the fast variables
($\tau= t/\gamma, Y = X/\gamma$)

\begin{equation}\label{shallow-water-corr}	
	\left\lbrace
	\begin{array}{l}
    \dsp \partial_\tau \zeta_1 + V_0\, \cdot \nabla_Y \zeta_1 - 
        |D_Y| \tanh(h_0\,|D_Y|) \psi_1    \\  \qquad \qquad \qquad
          =  V_0\, \cdot \nabla_Y \sech(h_0\,|D_Y|) b ~,   \\
    \dsp \partial_\tau \psi_1 + V_0\, \cdot \nabla_Y\psi_1 +\zeta_1  = 0 ~.
	\end{array}\right.
\end{equation}
In system \eqref{shallow-water-corr}, the functions $\zeta_1, \psi_1$ 
are periodic in the variables $Y$, while the variables $(t,X)$
 are to be 
treated as parameters.
The above system represents the
linearized water wave equations in a fluid region of depth $h_0$, with
a background flow given by the velocity field $V_0$. The source term
of the RHS is due to the effect of scattering of the background flow
from the variable bottom. 

The second result of this paper is a mathematical justification of the
derivation of the above system of model equations 
\eqref{shallow-water-eqs}\eqref{shallow-water-corr}. Our proof is in
the form of a consistency analysis of the Euler equations of free
surface water waves, for which we show that the functions
$(\zeta_a,\psi_a)$ whose constituents satisfy 
\eqref{shallow-water-eqs}\eqref{shallow-water-corr} are approximate
solutions of the Euler equations. They are not in general an exact 
solution, but they satisfy the equations \eqref{eq1} up to an error term
$E_a$, and we show that this error is small. Namely, we prove that 
\[
     | E_a |_{H^*} < C\gamma^{3/8} ~,
\]
where the appropriate norm $|\cdot|_{H^*}$ is defined as 
$ | E_a |_{H^*} =  |E_{a1}|_{L^2} +\gamma^{-3/8} |E_{a2}|_{H^{1/2}}$, and 
$E_a=(E_{a1}, E_{a2})$.  In particular, the error is small for the usual Hamiltonian norm
of the water waves equations.
The most striking point of our analysis is that this result is valid for the natural time scale $t=\BigOh{1}$ associated to (\ref{shallow-water-eqs}) only if the free surface does not resonate with the rapidly varying bottom. Such a resonance is obtained if there exist $(t,X)$ such that
$$
(k\cdot V_0(X,t))^2=|k|\tanh(h_0(X,t)|k|)
$$
for some $k\in\Z$ corresponding to a nonzero mode of the Fourier decomposition of the bottom parametrization $b$.
This condition can be viewed as a nonlinear generalization of the classical Bragg resonance which is obtained when the wavelengths of the free surface and of the bottom are of the same order, while here, the latter is much smaller. In absence of such resonances, it is possible to find {\it locally stationary} solutions for the corrector terms, that is, solutions to (\ref{shallow-water-corr}) that do not depend on the fast time variable $\tau$. 
When such resonances occur, the dependence of the correctors on $\tau$ cannot be removed, and this induces secular growth effects that destroy the accuracy of the approximation (it is only valid on a much smaller time scale, $t=o(1)$, than the relevant one). It is likely that in this case, the dynamics of the leading term $(\zeta_0,V_0)$ is affected, but this point is left for a future study.

The Ansatz \eqref{Eqn:BasicAnsatz1}\eqref{Eqn:BasicAnsatz2} and the 
error estimates for the quantity $E_a$ represent a problem in
homogenization theory. The principal terms $(\zeta_0,\psi_0)$ are 
solutions of an effective equation, and the multiscale terms 
$(\zeta_1,\psi_1)$ are the first corrector terms. The dynamics of 
the Euler equations require solving an elliptic equation at each
instant of time, on an unknown domain $\Omega(b_\gamma,\zeta)$ whose
boundaries are defined by oscillatory functions. The approach we 
take in this paper to the analysis of this elliptic problem and its
asymptotic behavior is to transform this domain to a reference 
domain $\Omega_0$, resulting in an elliptic problem with rapidly
varying periodic coefficients. The principal (effective) term and 
the correctors are derived from this problem, with the principal term
solving an effective equation, and the corrector solving an
appropriate cell problem. These are then used to express the 
Dirichlet -- Neumann operator on the free surface of the fluid
domain, which in turn is used to express the evolution equations 
\eqref{eq1}. The dynamics of the short spatial scales are separated 
from the evolution of the long scales using the concept of convergence
on two scales \cite{A}. The principal part of our mathematical
analysis is to control the error estimates of the homogenization
approximation \eqref{Eqn:BasicAnsatz1}\eqref{Eqn:BasicAnsatz2} in
describing the solutions of this elliptic boundary value problem and
the associated expression for the Dirichlet -- Neumann problem.


\section{Euler Equations}
Zakharov showed that
the water wave problem can be written in  
the Hamiltonian form \cite{Z}

\begin{equation}\label{eq3}
	\dt\left(\begin{array}{c}
	\zeta\\
	\psi
	\end{array}\right)
	=\left(\begin{array}{cc} 
	0 & I\\ 
	-I & 0\end{array}\right)
	\left(\begin{array}{c}
	\delta_\zeta H\\ 
	\delta_\psi H\end{array}\right),
\end{equation}
where the canonical variables are the surface elevation $\zeta$ and the trace of the velocity
potential on the free surface $\psi=\Phi_{\vert_{z=\zeta}}$,
and the Hamiltonian $H$ is given by
\begin{equation}\label{eq4}
	H(\zeta,\psi)=\frac{1}{2}\int_{\R^d}\psi G[\zeta,b]\psi + g
        \zeta^2 \, dX ~.
\end{equation}
The system for ($\zeta,\psi)$ is written as  \eqref{eq1}, which in
   dimensionless form  becomes
\begin{equation}\label{eq7}
	\left\lbrace
	\begin{array}{l}
	\dsp \dt \zeta-\frac{1}{\mu}
	G_{\mu}[\eps\zeta,\beta b_\gamma]
	\psi=0 ~,\\
	\dsp \dt \psi+\zeta+\frac{1}{2}\vert\nabla\psi\vert^2
	-\eps\mu\frac{(\frac{1}{\mu}G_{\mu}
	[\eps\zeta,\beta b_\gamma]\psi
	+\eps\nabla\zeta\cdot\nabla\psi)^2}
	{2(1+\mu\vert\nabla\zeta\vert^2)}=0 ~,
	\end{array}\right.
\end{equation}
where $b_\gamma(\cdot)=b(\cdot/\gamma)$ and where 
$G_{\mu}[\eps\zeta,\beta b_\gamma]$ is the nondimensionlized  
Dirichlet -- Neumann operator defined by
\begin{equation}\label{eq8}
	G_{\mu}[\eps\zeta,\beta b_\gamma]\psi=
	\sqrt{1+\vert\nabla\zeta\vert^2}\dn\Phi_{\vert_{z=\eps\zeta}}
\end{equation}
and where $\Phi$ is the potential function, satisfying
\begin{equation}\label{eq9}
	\left\lbrace
	\begin{array}{l}
	\dsp \mu \Delta\Phi+\dz^2\Phi=0\mbox{ in } \Omega,\\
	\Phi_{\vert_{z=\eps\zeta}}=\psi,\qquad 
\dn\Phi_{\vert_{z=-1+\beta b_\gamma}}=0,
	\end{array}\right.
\end{equation}
in the  fluid domain $\Omega(b_\gamma,\zeta)$, 
\[
	\Omega(b_\gamma,\zeta) = \{(X,z)\in \R^{d+1},
	-1+\beta b(X/\gamma)<z<\eps\zeta(X)\} ~.
\] 
The operator $\dn$ is the outwards \emph{conormal} derivative associated
with the operator $\mu\Delta+\dz^2$. One can rewrite \eqref{eq7} in 
Hamiltonian form \eqref{eq3}, replacing the Hamiltonian $H$ given by 
\eqref{eq4} by its nondimensional form
\begin{equation}\label{eq10}
	H(\zeta,\psi) = 
	\frac{1}{2}\int_{\R^d} \bigl(
	\psi \frac{1}{\mu}G_\mu[\eps\zeta,\beta  b_\gamma]\psi
	+ \zeta^2 \bigr) \, dX ~.
\end{equation}

\subsection{Notation}
We denote by $d=1$ or $2$ the horizontal dimension of the fluid
domain, and by $X\in \R^d$ the horizontal variables, while $z$ is 
the vertical variable. We denote by ${\bf e_z}$ the unit upward 
vertical vector. 

The domain and the potential function  will depend upon
both  regular and  rapidly oscillating variables, which we denote 
$X\in \R^d$ and $Y\in \T^d =\R^d/(2\pi\Z)^d$, respectively. That is, 
we will give data for the water wave problem which is of a multiscale 
nature, with the fixed multiscale bottom variations as well, and we 
will seek solutions which have a well defined asymptotic expansion 
in terms of multiscale quantities.
 To express this, we use the classical  notation of 
a multiscale function  that is, a function $f(X,Y)$ 
defined on $\R^d\times \T^d $, for which the realization is the trace
$f\trace=f(X,X/\gamma)$, \cite{BLP}.  In the problem we consider, there are other
variables as well, such as the vertical variable $z \in [-1,0]$, for which  $f=f(X,Y,z)$ is a 
multiscale function whose realization is 
 $f(X,X/\gamma,z)$.

The differential operators $\nabla$ and $\Delta$ act on functions of the horizontal 
variable $X$. The operator $\Lambda$ is defined by 
$\Lambda := (1 - \Delta)^{1/2}$. We use the standard notation for 
Fourier multipliers, namely  $D=\frac{1}{i}\nabla$ and 
$\widehat{f(D)u}(k)=f(k)\widehat{u}(k)$.  When applied to multiscale
functions, we distinguish this fact using the  notation $\nabla_Y$, $\Delta_Y$, $D_Y$, 
when  differential operators act specifically on the fast
variables $Y$, and $\nabla_X$, $\Delta_X$, $D_X$ when they act on 
the long scale $X$ variables. Finally, the notation $\nam$ stands for 
$\nam=(\sqrt{\mu}\nabla^T, \dz)^T$. 

We encounter functions defined on the fluid domain $\Omega(b_\gamma,\zeta)$
or the reference domain $\Omega_0= \R^d\times(0,1)$, as well as functions defined on
the free surface, parametrized by $X\in \R^d$. The notation used for
function space norms is that $\| \cdot \|_{L^2}$, $\| \cdot \|_{H^r}$
 is used for the classical Sobolev space norms over $\Omega_0$,
while for norms defined over the boundary $X\in \R^d$ we use the
notation $| \cdot |_{L^2}$, $| \cdot |_{H^r}$. Norms of multiscale functions are given 
similarly, for example $~|\cdot |_{L^2(C^1_Y)}$.

For all $r_1,r_2 \geq 0$, we also define the space $H^{r_1,r_2}=H^{r_1,r_2}(\R^d\times \T^d)$ by
\begin{equation}\label{defHrr}
H^{r_1,r_2}(\R^d\times \T^d)=\{f\in L^2(\R^d\times \T^d), \abs{f}_{H^{r_1,r_2}}<\infty\},
\end{equation}
with
$\abs{f}_{H^{r_1,r_2}}^2=\abs{(1-\Delta_X)^{r_1/2}(1-\Delta_Y)^{r_2/2}f}_{L^2(\R^d\times \T^d)}^2$.

\subsection{Change of variables and domain}
The first component of the Hamiltonian (\ref{eq10}) corresponds
to the nondimensionalized kinetic energy. It follows from the
definition of $G_\mu[\eps\zeta,\beta b_\gamma]$ and Green's identity that
\begin{equation}\label{eq11}
	\int_{\R^d}
	\psi \frac{1}{\mu}G_\mu[\eps\zeta,\beta b_\gamma]\psi \, dX =
	\frac{1}{\mu}\int_\Omega \vert \nam \Phi\vert^2 \, dzdX ~,
\end{equation}
where $\Phi$ is the velocity potential (\ref{eq9}). Since this 
expression depends on   $\beta$ and $\gamma$ through the 
domain of integration $\Omega(b_\gamma,\zeta)$, it is convenient 
to transform it into an integral over a fixed domain independent 
of the parameters and of the perturbations $\zeta$ and $b$. Under 
the assumption that the fluid height $h = 1+\eps\zeta-\beta b_\gamma$
 is always non-negative, namely
\begin{equation}\label{eq12}
	\exists \alpha>0,\qquad 1+\eps\zeta-\beta b_\gamma
	\geq \alpha\mbox{ on }\R^d ~,
\end{equation}
an explicit diffeomorphism $S$ mapping the flat strip 
$\Omega_0$ onto the fluid domain $\Omega$ is given by
\begin{equation}\label{eq13}
	S:
	\begin{array}{ccc}
	{ \Omega_0}&\to &\Omega\\
	(X,z)&\mapsto & \big(X,z+\sigma(X,z)\big),
	\end{array}
\end{equation}
where $\sigma(X,z)=\eps (z+1)\zeta(X) -z\beta b_\gamma(X)$. 
We have in particular $h =1 +\partial_z \sigma$. 

Defining $\phi$ on 
$\Omega_0$ by $\phi=\Phi\circ S$, 
one can check (see Prop. 2.7 of \cite{LJAMS} and \S 2.2 of \cite{AL})
that the new potential function $\phi$ solves
\begin{equation}\label{eq14}
	\left\lbrace
	\begin{array}{l}
	\nam\cdot P[\sigma]\nam\phi=0,\\
	\phi_{\vert_{z=0}}=\psi,\qquad \dn\phi_{\vert_{z=-1}}=0,
	\end{array}\right.
\end{equation}
where $\dn\phi_{\vert_{z=-1}}$ is the outward conormal derivative
in the new variables
\[
	\dn\phi_{\vert_{z=-1}}=
	-{\bf e_z}\cdot P[\sigma]\nam\phi_{\vert_{z=-1}}
\]
and where the matrix $P[\sigma]$ is given by
\begin{equation}\label{eq15}
	P[\sigma]=\left(\begin{array}{cc}
	h I & -\sqrt{\mu} \nabla \sigma\\
	-\sqrt{\mu} \nabla \sigma^T&
	\dsp \frac{1 +\mu\vert \nabla \sigma\vert^2}{h}
	\end{array}\right),
   \quad\mbox{ with }\quad h=1+\eps\zeta-\beta b_\gamma.
\end{equation}




\section{Multiple scale asymptotic expansions}

\subsection{Ansatz and decomposition of the solutions}
This section is devoted to the study of the elliptic problem \eqref{eq14}
where $(\zeta,\psi)$ are given; the time is fixed and appears as a parameter. 
We pose the multiple-scale Ansatz on $(\zeta,\psi)$ :
\begin{equation}
 \label{ansatz}
   \begin{array}{l}
    \eps \zeta = \eps \zeta_0(X) + \gamma \zeta_1(X,X/\gamma) \\
    \psi = \psi_0(X) + \gamma^2\psi_1(X,X/\gamma).
   \end{array}
\end{equation}
Recalling that  $\beta=\gamma=\sqrt{\mu}$, this leads to the decomposition of the height function of the fluid domain
\[
   h = h_0 +\beta h_1 ~, \quad\mbox{ where }\quad  
   h_0 = 1 +\eps \zeta_0 \quad\mbox{ and }\quad h_1 = \zeta_1-b_\gamma ~.
\]
Similarly, the new vertical deformations are posed in terms of this Ansatz
\[
   \sigma =  \sigma_0 +\beta\sigma_1 ~, \quad \mbox{ where }\quad 
   \sigma_0 = (z+1) \eps \zeta_0 \quad\mbox{ and }\quad 
   \sigma_1  =(z+1)  \zeta_1 - z b_\gamma ~.
\]
The coefficients $P[\sigma]$ are then written as
\[
    P[\sigma] = P_0+ \beta P_1,
\]
with 
\begin{equation}\label{eq16}
	P_0=P[\sigma_0] \quad 
	\mbox{ and } \quad 
	 \beta P_1 =P[\sigma] - P_0 ~.
\end{equation}
Explicitly
\[
	P_1 = \left(\begin{array}{cc}
	(\zeta_1 -b) I  & -\sqrt{\mu} \nabla \sigma_1\\
	-\sqrt{\mu} \nabla \sigma_1^T &
      \frac{1}{\beta} \Big( \frac{ 1 + \mu |\nabla \sigma|^2}{h}-
      \frac{1 +\mu|\nabla \sigma_0|^2}{h_0} \Big)
	\end{array}\right) ~.
\]
We accordingly  decompose the potential function $\phi$ as
\begin{equation}
  \phi = \phi_0(X,z) +  \beta \gamma \chi(X,z; \gamma) =
  \phi_0(X,z) +  \mu \chi(X,z; \gamma)
\end{equation}
where all the contributions coming from the roughness are contained 
in $\chi$. This section is devoted to deriving asymptotic expansions, 
with accompanying error estimates on the two components $\phi_0$ and
$\chi$, in the limit $\mu \to 0$. In order to do so, we must augment
\eqref{eq12} with the assumption that 
\begin{equation}\label{eq12bis}
    \exists \alpha_0>0,\qquad 1+\eps\zeta_0
    \geq \alpha_0\mbox{ on }\R^d ~.
\end{equation}
This ensures that the water depth does not vanish for the
averaged fluid domain that arises when all the fluctuations 
due to the roughness are neglected. Assumption \eqref{eq12bis} ensures 
the coercivity of $P_0$.

\begin{prop}\label{lemm2}
Let $\zeta,b \in W^{1,\infty}(\R^d)$ and assume that \eqref{eq12} and \eqref{eq12bis}
are satisfied. Then for all $\psi$ such that $\nabla\psi \in H^{1/2}(\R^d)^d$, 
there exists a unique solution $\phi$ to \eqref{eq14} such that 
$\nam\phi\in H^1(\Omega_0)^{d+1}$. Moreover, $\phi_0$ and $\chi$ solve 
\begin{equation}\label{eq17}
	\left\lbrace
	\begin{array}{l}
	\nam\cdot P_0\nam\phi_0=0,\\
	\phi_0\,_{\vert_{z=0}}=\psi_0,\qquad 
	-{\bf e_z}\cdot  P_0\nam \phi_0\,_{\vert_{z=-1}}=0,
	\end{array}\right.
\end{equation}
and
\begin{equation}\label{eq18}
	\left\lbrace
	\begin{array}{l}
	\nam\cdot P[\sigma]\nam\chi=-\frac{1}{\gamma}
         \nam\cdot P_1\nam\phi_0,\\
	\chi\,_{\vert_{z=0}}=\psi_1,\qquad 
	-{\bf e_z}\cdot P[\sigma]\nam\chi\,_{\vert_{z=-1}}=\frac{1}{\gamma}
		{\bf e_z}\cdot P_1\nam\phi_0\,_\bott.
	\end{array}\right.
\end{equation}
\end{prop}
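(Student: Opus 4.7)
The proposition splits into two essentially independent parts: well-posedness of the transported elliptic problem \eqref{eq14} on the flat strip $\Omega_0$, and an algebraic decomposition of its solution corresponding to the multiple-scale Ansatz \eqref{ansatz}. I would handle them in that order, expecting all of the analytic content to live in the first part and the second to reduce to a direct algebraic verification driven by the scaling relation $\mu=\beta\gamma=\gamma^2$.

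\emph{Well-posedness of \eqref{eq14}.} The plan is the standard Lax--Milgram argument for divergence-form elliptic problems on a strip. Lift the Dirichlet datum by, for example, $\underline\psi(X,z):=e^{z\Lambda}\psi(X)$; since $\nabla\psi\in H^{1/2}(\R^d)^d$ one has $\nam\underline\psi\in L^2(\Omega_0)^{d+1}$. Writing $\widetilde\phi:=\phi-\underline\psi$, the latter should lie in the Hilbert space $\mathcal{H}:=\{u:u_{\vert z=0}=0,\;\nam u\in L^2(\Omega_0)^{d+1}\}$, on which $\|\nam\cdot\|_{L^2}$ is a genuine norm thanks to the top-trace Poincaré inequality in $z$. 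The variational formulation reads
\begin{equation*}
\int_{\Omega_0}\nam v\cdot P[\sigma]\nam\widetilde\phi\,dXdz \;=\; -\int_{\Omega_0}\nam v\cdot P[\sigma]\nam\underline\psi\,dXdz\qquad\forall v\in\mathcal{H},
\end{equation*}
and the Neumann condition at $z=-1$ is encoded in the choice of test space. Continuity of the bilinear form is automatic from $\zeta,b\in W^{1,\infty}$, so the only substantive point is coercivity of $P[\sigma]$. For this I would exploit the fact that $P[\sigma]$ is the pull-back of the identity through $S$: $\int_{\Omega_0}\nam\phi\cdot P[\sigma]\nam\phi\,dXdz=\int_{\Omega}\abs{\nam\Phi}^2dXdz$ with $\Phi=\phi\circ S^{-1}$. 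Under \eqref{eq12} one has $h=1+\partial_z\sigma\geq\alpha>0$ and $\|\nabla\sigma\|_\infty<\infty$, so $S$ is bi-Lipschitz and completing the square on $\partial_z\phi$ in the explicit form of $P[\sigma]$ in \eqref{eq15} yields a pointwise bound $\xi\cdot P[\sigma]\xi\geq c_0\abs{\xi}^2$ with $c_0=c_0(\alpha,\|\nabla\sigma\|_\infty)>0$. Lax--Milgram then delivers a unique $\widetilde\phi\in\mathcal{H}$, hence a unique $\phi$; this is essentially the construction already carried out in \cite{LJAMS}.

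\emph{Decomposition.} The same well-posedness argument applied to $P_0=P[\sigma_0]$, whose coercivity follows from \eqref{eq12bis} alone, produces a unique solution $\phi_0$ of \eqref{eq17} with Dirichlet datum $\psi_0$. Set $\chi:=(\phi-\phi_0)/\mu$, meaningful since $\mu=\gamma^2>0$. The identities in \eqref{eq18} then follow by algebra. At $z=0$ the Ansatz \eqref{ansatz} gives $\phi_{\vert z=0}-\phi_{0\,\vert z=0}=\gamma^2\psi_1=\mu\psi_1$, hence $\chi_{\vert z=0}=\psi_1$. In the interior, subtracting \eqref{eq17} from \eqref{eq14} and using $P[\sigma]-P_0=\beta P_1$ gives
\begin{equation*}
\mu\,\nam\cdot P[\sigma]\nam\chi \;=\; \nam\cdot P[\sigma]\nam(\phi-\phi_0) \;=\; -\beta\,\nam\cdot P_1\nam\phi_0,
\end{equation*}
and dividing by $\mu=\beta\gamma$ yields the interior equation of \eqref{eq18}. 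The Neumann identity at $z=-1$ is obtained analogously by subtracting the bottom conditions of \eqref{eq17} and \eqref{eq14}.

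\emph{Main obstacle.} The only analytic subtlety is the coercivity of $P[\sigma]$ together with the correct choice of function space on the unbounded strip when $\psi$ is prescribed only through $\nabla\psi\in H^{1/2}$; I would invoke the framework of \cite{LJAMS} for this. Once the well-posedness of \eqref{eq14} and \eqref{eq17} is available, the passage to \eqref{eq18} is a purely algebraic computation powered by the scaling identity $\mu=\beta\gamma$, and introduces no new analytic difficulty.
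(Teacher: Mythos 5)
Your proposal is correct and follows essentially the same route as the paper: the paper likewise treats the well-posedness of \eqref{eq14} and \eqref{eq17} as classical (citing \cite{LJAMS}, \cite{AL}), and derives \eqref{eq18} by the same direct algebra — defining $\chi=(\phi-\phi_0)/\mu$, subtracting the equations, using $P[\sigma]=P_0+\beta P_1$ and $\mu=\beta\gamma$, and reading off the boundary conditions. One small wrinkle in your sketch: the lift $e^{z\Lambda}\psi$ does not obviously have $\nam\underline\psi\in L^2(\Omega_0)$ under the hypothesis $\nabla\psi\in H^{1/2}$ alone (there is a low-frequency issue, since $\psi$ itself need not be in $H^{1/2}$), which is precisely why the construction in \cite{LJAMS} works in a Beppo--Levi-type space; your deferral to that reference handles this correctly.
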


\begin{proof}
The existence of a unique solution $\phi$ such that 
$\nam\phi\in H^1(\Omega_0)^{d+1}$ to \eqref{eq14} is a
classical result, and we thus omit the proof. Similarly, 
there exists a unique solution $\phi_0$ such that 
$\nam\phi_0\in H^1(\Omega_0)^{d+1}$ to \eqref{eq17} since the
boundary condition on the lower boundary is the conormal 
derivative associated to the elliptic operator 
$\nam\cdot P_0\nam$. It remains to prove that $\chi$ 
solves \eqref{eq18}. A calculation gives that
\begin{eqnarray*}
	\lefteqn{\dn\phi_{\vert_{z=-1}}
	:=- {\bf e_z}\cdot P[\sigma]\nam\phi_{\vert_{z=-1}}}\\
 	&=&- \beta\gamma{\bf e_z}\cdot P[\sigma]\nam\chi\,_{\vert_{z=-1}}
	-{\bf e_z}\cdot P_0\nam\phi_0\,_{\vert_{z=-1}}
	- \beta {\bf e_z}\cdot P_1\nam\phi_0\,_\bott ~.
\end{eqnarray*}
Since by assumption one also has $\dn\phi_{\vert_{z=-1}}=0$ and 
$-{\bf e_z}\cdot P_0\nam\phi_0\,_{\vert_{z=-1}}=0$,
one has
\[
	-{\bf e_z}\cdot P[\sigma]\nam\chi\,_{\vert_{z=-1}}= \frac{1}{\gamma}
	{\bf e_z}\cdot P_1\nam\phi_0\,_\bott.
\]
It is straightforward to check that $\chi\,_{\vert_{z=0}}=\psi_1$ and that
\begin{eqnarray*}
	\nam\cdot P[\sigma]\nam\chi
	&=&\frac{1}{\gamma\beta}(\nam\cdot P[\sigma]\nam\phi-	
	\nam\cdot P[\sigma] \nam\phi_0)\\
	&=& -\frac{1}{\gamma}	\nam\cdot P_1\nam\phi_0,
\end{eqnarray*}
and the result follows.
\end{proof}



\subsection {Asymptotic analysis with estimates of $\nam \phi_0$}
In this section we prove an estimate on $\nam\phi_0$, and we give the
first terms of its asymptotic expansion in the limit as $\mu\to
0$. For purposes of understanding the $H^{-1/2}$-norm of the trace 
of $\nam\phi_0$ on the 
free surface $\{z=0 \}$,we use $L^2$ estimates on $\Omega_0$ of both $\nam\phi_0$ 
and its generalized Riesz transform, given by 
$\Lambda^{-1}\partial_z\nam\phi_0$. This is generalized to higher order norms.

\begin{prop}\label{lemm3}
	Let $r\in {\mathbb N}$ and $\zeta_0 \in W^{1+r,\infty}\cap W^{2,\infty}(\R^d)$ and 
	assume that (\ref{eq12bis}) is
	satisfied
	for some $\alpha_0>0$. Then \\
	{\bf (i.)} For all $\mu\in (0,1)$ and all
	$\psi_0$ such that $\nabla\psi_0\in H^r(\R^d)^d$, 
	the solution $\phi_0$ to \eqref{eq17} satisfies
\begin{eqnarray*}
   && \Vert \Lambda^r\nam \phi_0\Vert_{L^2} \leq \sqrt{\mu} 
	    C(\frac{1}{\alpha_0},\vert\zeta_0\vert_{W^{1+r,\infty}})
	    \vert \nabla\psi_0\vert_{H^r} ~,  \\
   && \qquad \Vert \Lambda^{r-1}\partial_z\nam\phi_0\Vert_{L^2} \leq 
            \mu C(\frac{1}{\alpha_0},\vert\zeta_0\vert_{W^{1+r,\infty}},\vert \zeta_0\vert_{W^{2,\infty}}) 
            \vert \nabla\psi_0\vert_{H^r} ~.  
\end{eqnarray*}
	{\bf (ii.)}  If  $\nabla\psi_0\in H^{r+1}(\R^d)^d$, 
	one also has
\begin{eqnarray*}
  && \Vert \Lambda^r(\nam \phi_0-\nam\psi_0)\Vert_{L^2} \leq \mu 
	    C(\frac{1}{\alpha_0},\vert\zeta_0\vert_{W^{1+r,\infty}})
            \vert \nabla\psi_0\vert_{H^{r+1}} ~;   \\
  &&  \qquad \Vert\Lambda^{r-1} \partial_z( \nam\phi_0 - \nam\psi_0)\Vert_{L^2} 
          \leq \mu C(\frac{1}{\alpha_0},\vert\zeta_0\vert_{W^{1+r,\infty}},\vert \zeta_0\vert_{W^{2,\infty}})
            \vert \nabla\psi_0\vert_{H^{r+1}}.
\end{eqnarray*}
	{\bf (iii.)}  Suppose that $\zeta_0 \in W^{2+r,\infty}(\R^d)$
        and $\Delta\psi_0\in H^{2+r}(\R^d)$, and set 
\[
	\phi_0^{(1)}=- h_0^2(\frac{z^2}{2}+z)\Delta\psi_0 ~, \qquad 
	( h_0:=1+\eps\zeta_0) ~.
\]
as the next term of the asymptotic expansion. Then there are estimates 
of the remainder, in the form
\begin{eqnarray*}
   &&   \Vert \Lambda^r\big(\nam \phi_0-\nam (\psi_0-\mu \phi_0^{(1)})\big)
           \Vert_{L^2}\leq \mu^2
	   C(\frac{1}{\alpha_0},\vert\zeta_0\vert_{W^{2+r,\infty}})
           \vert \Delta\psi_0\vert_{H^{2+r}} ~;  \\
   && \qquad \Abs{\Lambda^{r-1}\partial_z \nam (\phi_0 - \psi_0 - \mu\phi_0^{(1)})}_{L^2}
       \leq \mu^2
	   C(\frac{1}{\alpha_0},\vert\zeta_0\vert_{W^{2+r,\infty}})
           \vert \Delta\psi_0\vert_{H^{2+r}} ~.  
\end{eqnarray*}
\end{prop}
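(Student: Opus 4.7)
The plan is to set up a unified energy estimate for $\widetilde\phi_0:=\phi_0-\psi_0$ adapted to the anisotropic gradient $\nam=(\sqrt\mu\nabla^T,\partial_z)^T$, from which all three parts follow. Since $\psi_0=\psi_0(X)$, one has $\nam\psi_0=(\sqrt\mu\nabla\psi_0,0)^T$, and a direct computation using $\partial_z\nabla\sigma_0=\nabla h_0$ gives
\[
\nam\cdot P_0\nam\psi_0 \;=\; \mu\,h_0\Delta\psi_0,\qquad -{\bf e_z}\cdot P_0\nam\psi_0\bott=0,
\]
the second identity holding because $\nabla\sigma_0\vert_{z=-1}=0$. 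Hence $\widetilde\phi_0$ solves
\[
\nam\cdot P_0\nam\widetilde\phi_0 = -\mu h_0\Delta\psi_0,\qquad \widetilde\phi_0\vert_{z=0}=0,\qquad -{\bf e_z}\cdot P_0\nam\widetilde\phi_0\bott=0,
\]
with no boundary source and a bulk source of size $\mu$. This is the structural input that drives the whole proof.

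Testing against $\widetilde\phi_0$, invoking the uniform-in-$\mu$ coercivity of $P_0$ (provided by \eqref{eq12bis}) and Poincaré in $z$ (available thanks to the Dirichlet condition at $z=0$), one obtains $\Abs{\nam\widetilde\phi_0}_{L^2}\leq \mu\,C\abs{\Delta\psi_0}_{L^2}$, which is already the $r=0$ case of \textbf{(ii)}. The weaker statement \textbf{(i)} on $\nam\phi_0$ then follows by the triangle inequality with $\Abs{\nam\psi_0}_{L^2}=\sqrt\mu\abs{\nabla\psi_0}_{L^2}$. Higher $r$ is obtained by applying $\Lambda^r$ to the system and absorbing commutators with $h_0,\nabla\sigma_0$ by standard product estimates, which accounts for the $W^{1+r,\infty}$ requirement on $\zeta_0$. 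The vertical estimate $\Abs{\Lambda^{r-1}\partial_z\nam\phi_0}_{L^2}$ is read off the PDE itself: isolating $\partial_z^2\phi_0$ in $\nam\cdot P_0\nam\phi_0=0$ expresses it as $\mu$ times horizontal-derivative quantities of $\phi_0$, while the mixed $\sqrt\mu\partial_z\nabla\phi_0$ is recast as $\nabla(\sqrt\mu\partial_z\phi_0)$ and estimated through the $\sqrt\mu$-bound on $\partial_z\phi_0$ already established in (i). The $W^{2,\infty}$ hypothesis enters exactly when a derivative falls on $h_0$ through $\nabla\cdot(h_0\nabla\phi_0)$ at low $r$.

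For \textbf{(iii)} I would write $\phi_0=\psi_0+\mu\widetilde\phi^{(1)}+\mu^2 R$ and substitute. Collecting the order-$\mu$ contribution in the PDE and in the bottom conormal condition produces the one-dimensional boundary value problem $\partial_z(h_0^{-1}\partial_z\widetilde\phi^{(1)})=-h_0\Delta\psi_0$ with $\widetilde\phi^{(1)}\vert_{z=0}=0$ and $\partial_z\widetilde\phi^{(1)}\vert_{z=-1}=0$, whose unique solution is the explicit profile of the statement (up to the sign convention adopted by the authors). With this choice, the residual produced by substituting $\psi_0+\mu\widetilde\phi^{(1)}$ back into the full problem is of order $\mu^2$ in the relevant norm, both in the bulk (collecting only the $\sqrt\mu\nabla\sigma_0$ and $\mu|\nabla\sigma_0|^2$ entries of $P_0$ acting on $\widetilde\phi^{(1)}$) and at the bottom (where the conormal again vanishes thanks to $\nabla\sigma_0\vert_{z=-1}=0$). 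The remainder $R$ therefore solves an inhomogeneous variant of the problem for $\widetilde\phi_0$ with an $O(1)$ source, and the (ii)-type energy estimate applied to $R$ yields the advertised $\mu^2$-bound; the $\partial_z\nam$ part follows by the same PDE argument as in (i).

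The principal obstacle is bookkeeping the powers of $\mu$. Because $\nam$ carries an explicit $\sqrt\mu$ horizontally, every gain of a power of $\mu$ must be paid for, either by transferring a $\nabla$ off $\psi_0$ through an integration by parts (costing one derivative of regularity, hence the jump from $H^r$ to $H^{r+1}$ on $\nabla\psi_0$ between (i) and (ii), and again from $H^{r+1}$ to $H^{2+r}$ on $\Delta\psi_0$ in (iii)), by the crucial cancellation $\nabla\sigma_0\vert_{z=-1}=0$ (which keeps the conjugated conormal datum vanishing at every order), or by the one-dimensional structure of the limit operator $\partial_z(h_0^{-1}\partial_z\cdot)$. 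Propagating these cancellations through the $\Lambda^r$ commutators and through the substitution in (iii) without accruing spurious $\sqrt\mu$ factors is what demands the most care.
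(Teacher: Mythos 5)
Your overall strategy is the same as the paper's: prove the proposition by splitting off $\psi_0$, deriving the boundary-value problem for the remainder with a size-$\mu$ bulk source, energy-estimating with coercivity and Poincar\'e, reading the vertical estimate off the equation for $\partial_z^2\phi_0$, and iterating the peeling procedure for part (iii). The system you write for $\widetilde\phi_0=\phi_0-\psi_0$, including $\nam\cdot P_0\nam\psi_0=\mu h_0\Delta\psi_0$ and the vanishing conormal at $z=-1$ via $\nabla\sigma_0\bott=0$, is exactly the system \eqref{Eqn:SystemForChi0}--\eqref{eq2.14} (with $\widetilde\phi_0=\mu\chi_0^{(1)}$), and your identification of the one-dimensional profile $\partial_z(h_0^{-1}\partial_z\cdot)=-h_0\Delta\psi_0$ giving $\phi_0^{(1)}$ matches the paper's \eqref{eq2.15}.

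There is, however, one genuine gap in the way you order the parts. You obtain $\Abs{\nam\widetilde\phi_0}_{L^2}\le\mu\,C\abs{\Delta\psi_0}_{L^2}$ from the energy estimate on $\widetilde\phi_0$, and then recover \textbf{(i)} by triangle inequality with $\Abs{\nam\psi_0}_{L^2}=\sqrt\mu\abs{\nabla\psi_0}_{L^2}$. But the resulting bound for $\nam\phi_0$ involves $\abs{\Delta\psi_0}_{L^2}\sim\abs{\nabla\psi_0}_{H^1}$, so at general order $r$ you have proved \textbf{(i)} only under $\nabla\psi_0\in H^{r+1}$, whereas the proposition asserts it under the weaker hypothesis $\nabla\psi_0\in H^r$. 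The paper avoids this loss by proving \textbf{(i)} directly from the standard variational elliptic estimate for $\nam\phi_0$ against the constant-in-$z$ Dirichlet lift of $\psi_0$ (cf.\ Corollary 2.2 of \cite{AL}), which requires nothing more than $\nabla\psi_0\in H^r$. Alternatively, you can stay within your own framework and still get \textbf{(i)} without the extra derivative: in the duality pairing $\int_{\Omega_0}P_0\nam\widetilde\phi_0\cdot\nam\widetilde\phi_0=\mu\int_{\Omega_0}h_0\Delta\psi_0\,\widetilde\phi_0$, integrate by parts in $X$ on the right-hand side to put one derivative back on $\widetilde\phi_0$; the resulting terms are bounded by $\sqrt\mu\,\abs{\nabla\psi_0}_{L^2}\Abs{\nam\widetilde\phi_0}_{L^2}$, yielding the $\sqrt\mu$-rate with only $\nabla\psi_0\in H^r$. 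Without this step your argument does not prove \textbf{(i)} as stated. Once that is repaired, the remaining parts of your proposal (higher-$r$ commutators, the $\partial_z^2$ formula and its $\Lambda^{r-1}$ estimate, and the $\mu^2$-residual in (iii)) follow the paper's proof.
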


\begin{proof}The first inequality of the proposition is obtained by  standard elliptic 
estimates for $\Vert \nam \phi_0\Vert_{L^2}$ (see Corollary 2.2 of \cite{AL}). The Riesz transform of 
$\nam\phi_0$ has components 
$\Lambda^{-1} \partial_z \nam\phi_0 
   = (\sqrt{\mu} \Lambda^{-1} \nabla\partial_z \phi_0, 
     \Lambda^{-1} \partial_z^2\phi_0)$;
estimates of the first component come from the first inequality 
of {\bf (i)}, since $\Lambda^{-1} \nabla$ is $L^2$ bounded. 
Estimates of the second component are obtained through the 
equation \eqref{eq17} itself. Namely one has an expression for 
$\partial_z^2\phi_0$ in the form
\begin{eqnarray}
\label{madrid}
  && \dz^2\phi_0=\mu\frac{h_0}{1+\mu\vert\nabla\sigma_0\vert^2}\Big[-\frac{\dz\vert\nabla\sigma_0\vert^2}{h_0}\dz\phi_0 + 
      \partial_z (\nabla\sigma_0 \cdot \nabla \phi_0)
     + \nabla\cdot( \nabla\sigma_0 \partial_z \phi_0 )
     - \nabla\cdot( h_0 \nabla \phi_0) \Big] ~.
\end{eqnarray}
In order to get an estimate on $\Vert\Lambda^{r-1}\dz^2\phi_0\Vert_2$, we need the following lemma.
\begin{lemm}\label{lemmfrac}
Let $r\in {\mathbb N}$ and $F$ and $G\geq 0$ be such that $\Lambda^{r-1} F\in L^2(\Omega_0)$
and $G\in L^\infty((-1,0);W^{\vert r-1\vert,\infty}(\R^d))$. Then
$$
\Vert \Lambda^{r-1} (\frac{F}{1+G})\Vert_{L^2}\leq C(\Vert G\Vert_{L^\infty_z W^{\vert r-1\vert,\infty}_X})\Vert \Lambda^{r-1} F\Vert_{L^2}.
$$
\end{lemm}
\begin{proof}
Just write 
$
\frac{F}{1+G}=F-F\frac{G}{1+G}.
$
Recalling that one has
$\abs{fg}_{H^{r-1}}\lesssim \abs{f}_{H^{r-1}}\abs{g}_{W^{\vert r-1\vert,\infty}}$, we 
get
\begin{eqnarray*}
\Vert \Lambda^{r-1} (\frac{F}{1+G})\Vert_{L^2}&\lesssim& \Vert \Lambda^{r-1} F\Vert_{L^2}
\big(1+\Vert{\frac{G}{1+G}\Vert}_{L^\infty_z W^{\vert r-1\vert,\infty}_X}\big)\\
&\lesssim& C(\Vert G\Vert_{L^\infty_z W^{\vert r-1\vert,\infty}_X})\Vert \Lambda^{r-1} F\Vert_{L^2}.
\end{eqnarray*}
\end{proof}
Applying this lemma to (\ref{madrid}) with $G=\mu\vert \nabla\sigma_0\vert^2$, one easily gets
$$
\Vert \Lambda^{r-1}\dz^2\phi_0\Vert_{L^2}\leq \mu C(\frac{1}{\alpha_0},
\vert \zeta_0\vert_{W^{r+1,\infty}})
\frac{1}{\sqrt{\mu}}\Vert \Lambda^r\nam\phi_0\Vert_{L^2},
$$
and the estimate follows from the control on $\Vert \Lambda^r \nam\phi_0\Vert_{L^2}$ 
established above.

For the second point of the proposition, we write $\phi_0 = \psi_0 +\mu \chi_0^{(1)}$. 
The resulting system for $\chi_0^{(1)} $ is
\begin{equation}\label{Eqn:SystemForChi0}
	\left\lbrace
	\begin{array}{l}
	\nam\cdot P_0\nam \chi_0^{(1)} = - \frac{1}{\mu}
\nam\cdot P_0\nam \psi_0 \\
\chi_0^{(1)}\,_{\vert_{z=0}}=0,\qquad 
	-{\bf e_z}\cdot P_0\nam \chi_0^{(1)}\,_{\vert_{z=-1}}=\frac{1}{\mu}
       {\bf e_z}\cdot P_0\nam \psi_0\,_{\vert_{z=-1}}.
	\end{array}\right.
\end{equation}
A calculation shows that
\[
 - \frac{1}{\mu}
\nam\cdot P_0\nam \psi_0 =- \nabla\cdot (h_0\nabla\psi_0 )+
 \partial_z (\nabla\sigma_0\cdot \nabla \psi_0) =- h_0 \Delta\psi_0
\]
and that ${\bf e_z}\cdot P_0\nam \psi_0\,_{\vert_{z=-1}} =0$, we obtain 
	\begin{equation}\label{eq2.14}
	\left\lbrace
	\begin{array}{l}
	\nam\cdot P_0\nam \chi_0^{(1)} =- h_0 \Delta\psi_0,\\
\chi_0^{(1)}\,_{\vert_{z=0}}=0,\qquad 
	-{\bf e_z}\cdot P_0\nam \chi_0^{(1)}\,_{\vert_{z=-1}}=0.
	\end{array}\right.
	\end{equation}
Multiplying the equation by $\chi_0^{(1)}$ and integrating by parts, we get
\[
	\int_{\Omega_0}\nam \chi_0^{(1)}\cdot P_0\nam \chi_0^{(1)} \,
        dzdX  = \int_{\Omega_0} \chi_0^{(1)}  h_0 \Delta\psi_0 \, dzdX
        ~.
\]
Using the coercivity of the matrix $P_0$ (Prop. 2.3.iii of \cite{AL}), the
Cauchy-Schwarz inequality and Poincar\'e inequality (in order to control
$\Vert \chi_0^{(1)}\Vert_{L^2}$ by $\Vert \nam \chi_0^{(1)}\Vert_{L^2}$), 
one gets
\[
  \Vert \nam \chi_0^{(1)}\Vert_{L^2}\leq  C(\frac{1}{\alpha_0},
  \vert\zeta_0\vert_{W^{1,\infty}})| \Delta \psi_0|_{L^2}
\]
and the result follows. Higher order estimates are handled similarly 
and only require the control of additional commutator estimates. We 
omit these classical details. The estimate of the generalized Riesz
transform $\Lambda^{-1}\partial_z\nam(\phi_0 - \psi_0)$ is similar to the
analog estimate in {\bf (i)} of this proposition.

For the third point of the proposition, we solve  \eqref{eq2.14} 
at lowest order in $\mu$. We 
write $\chi_0^{(1)} = \phi_0^{(1)} +\mu \chi_0^{(2)} 
$, or equivalently 
\[
    \phi_0 = \psi_0 +    \mu \phi_0^{(1)} +\mu^2 \chi_0^{(2)}
\]
with $\phi_0^{(1)} =- h_0^2(\frac{z^2}{2}+z)\Delta\psi_0$. The 
correction $\chi_0^{(2)}$ satisfies the system
\begin{equation}\label{eq2.15}
	\left\lbrace
	\begin{array}{l}
	\dsp \nam\cdot P_0\nam \chi_0^{(2)} =-
 \nabla \cdot(h_0 \nabla \phi_0^{(1)} -\nabla  \sigma_0 \partial_z \phi_0^{(1)})\\
\qquad\qquad\qquad\quad\dsp + \partial_z( \nabla \sigma_0 \cdot \nabla \phi_0^{(1)} -
\frac{|\nabla \sigma_0|^2}{h_0} \partial_z \phi_0^{(1)}).
  \\
	\dsp \chi_0^{(2)}\,_{\vert_{z=0}}=0,\qquad 
	-{\bf e_z}\cdot P_0\nam \chi_0^{(2)}\,_{\vert_{z=-1}}=0,
	\end{array}\right.
	\end{equation}
where we used the fact that ${\bf e_z}\cdot P_0\nam \phi_0^{(1)}\,_{\vert_{z=-1}} = 0$ 
to obtain the bottom boundary conditions. Proceeding as above, we get the result.
\end{proof}

\subsection{Asymptotic analysis with estimates of $\chi$}

To find an asymptotic expansion of $\chi$,
our starting point is the equation \eqref{eq18}
for $\chi$. Decompose the solution as the sum of a multiscale function 
and a correction term,
\begin{equation}\label{exp-chi}
   \chi = \phi_1^{(0)}(X,Y,z)\trace
        + \sqrt{\mu}\chi_1^{(1)}(X,z;\gamma) ~.
\end{equation}
When acting on a multiscale function of the variables $(X,X/\gamma)$, 
the operator $\nam$ becomes $\nabla_{Y,z} + \nam_X$, where 
$\nam_X = \left( \begin{matrix} \sqrt{\mu}\nabla_X\\ 0\end{matrix}\right)$:
\[
   \nam (f(X,Y)\trace)=\big[(\nabla_{Y,z}+\nam_X)f(X,Y)\big]\trace.
\]
We can therefore write
\begin{eqnarray*}
   \nam\cdot P[\sigma]\nam\chi & = & (\nabla_{Y,z}+\nam_X)
    \cdot P_0(\nabla_{Y,z}+\nam_X)\phi_1^{(0)}\trace    \\
  & & + \beta \nam \cdot P_1\nam \phi_1^{(0)}
      +\sqrt{\mu}\nam\cdot P[\sigma]\nam \chi_1^{(1)},
\end{eqnarray*}
so that (\ref{eq18}) becomes (recall that $\beta=\sqrt{\mu}$)

\begin{equation}\label{eq2.16bis}
   \left\lbrace
  \begin{array}{l}
   \dsp \nabla_{Y,z}\cdot P_0\nabla_{Y,z}\phi_1^{(0)}\trace 
    + \sqrt{\mu}\nam\cdot P[\sigma]\nam \chi_1^{(1)}  \\
   \hspace{1cm}\dsp =
    -\frac{1}{\gamma}\nam\cdot P_1\nam\phi_0 
    + \sqrt{\mu}\nam \cdot \widetilde{A}+\sqrt{\mu}\widetilde{g} ~,  \\
      ({\phi_1^{(0)}}_\trace+\sqrt{\mu}\chi_1^{(1)}) \,_{\vert_{z=0}} = {\psi_1}_\trace ~,  \\
   \dsp 	
     \big(-{\bf e_z}\cdot P_0\nabla_{Y,z}\phi_1^{(0)}\trace\big)
     \,_{\vert_{z=-1}}-\sqrt{\mu}{\bf e_z}\cdot P[\sigma]\nam\chi_1^{(1)}\,_{\vert_{z=-1}}\\
    \hspace{1cm}\dsp =\frac{1}{\gamma}
     {\bf e_z}\cdot P_1\nam\phi_0\,_\bott - \sqrt{\mu}{\bf e_z} 
     \cdot \widetilde{A}\,_{\vert_{z=-1}}  
  \end{array} \right.
\end{equation}
with\footnote{The operator $\nam$ always acts on multiscale functions
  on the two variables $X$ and $z$ (and not on $Y$). The notation 
  $\nam \phi_1^{(0)}$ is therefore a shortcut for $\nam(\phi_1^{(0)}\trace)$.}
\[
   \widetilde{A}=-P_1\nam\phi_1^{(0)}-P_0\left(\begin{array}{c}\nabla_X\phi_1^
{(0)}\\0\end{array}\right)_\trace
\]
and
\[
  \widetilde{g}= -\begin{pmatrix} \nabla_X \\ 0 \end{pmatrix} 
  \cdot ( P_0 \nabla_{Y,z} \phi_1^{(0)})_\trace ~.
\]

In order to make the leading order terms in (\ref{eq2.16bis}) explicit, 
we further decompose $P_0$ as $P_0= P_0^{(0)}+\sqrt{\mu} P_0^{(1)}$ 
and $P_1 = P_1^{(0)} + \sqrt{\mu} P_1^{(1)}$, with
\[
  P_0^{(0)} = \left(\begin{matrix} h_0 I&0\\ 0&\frac{1}{h_0} I
    \end{matrix}\right), \qquad P_0^{(1)} 
   = \left(\begin{matrix} 0 & - \nabla\sigma_0  \\
   - \nabla\sigma_0^T & \frac{\sqrt{\mu} |\nabla\sigma_0|^2}{h_0}
     \end{matrix}\right) 
\]
and
\[ 
   P_1^{(0)} = \left(\begin{matrix} (\zeta_1-b) I  &-\nabla_Y \sigma_1  \\ 
  -\nabla_Y  \sigma_1^T & \frac{b-\zeta_1}{h_0^2} I
     \end{matrix}\right), \qquad 
     P_1^{(1)}=\left(\begin{matrix} 0 & - \nabla_X\sigma_1  \\
  - \nabla_X\sigma_1^T & p_{22}^{(1)}
    \end{matrix}\right) 
\]
where the $(2,2)$-coefficient of $P_1^{(1)}$ is 
\[  
   p_{22}^{(1)} = \mu^{-1/2}\Big(\beta^{-1}
     ( \frac{ 1 + \mu |\nabla \sigma|^2}{h} - \frac{1 +\mu|\nabla \sigma_0|^2}{h_0}) 
      -\frac{b-\zeta_1}{h_0^2}\Big) ~.
\]

\begin{lemm}\label{lemmeP1}
The coefficient matrix $P_1 = P_1(\sigma)$ has multiscale functions as coefficients. 
Considered as $P_1 = P_1(X,X/\sqrt{\mu})$, the following estimates hold 
for all $r\in {\mathbb N}$:
\begin{equation*}
   \Abs{P_1^{(0)}}_{L^\infty_z W^{r,\infty}} + \Abs{\partial_z P_1^{(0)}}_{L^\infty_z W^{r,\infty}}  \leq  \mu^{-r/2}
C(\frac{1}{\alpha_0}, \abs{\zeta_0}_{C^{r+1}})
(\abs{\zeta_1}_{C^r} +\abs{\nabla_Y\zeta_1}_{C^r}+\abs{b}_{C^{r+1}}),
   \end{equation*} 
   
\begin{eqnarray*}
&& \Abs{P_1^{(1)}}_{L^\infty_z W^{r,\infty}} + \Abs{\partial_z P_1^{(1)}}_{L^\infty_z W^{r,\infty}}  \leq  \mu^{-r/2}
    C(\frac{1}{\alpha_0},\frac{1}{\alpha}, \abs{\zeta_0}_{C^{r+1}},
\abs{\zeta_1}_{C^r},|\nabla_X\zeta_1|_{C^r}, \abs{\nabla_Y\zeta_1}_{C^r}, \abs{b}_{C^{r+1}}) ~. 
\end{eqnarray*}

\end{lemm}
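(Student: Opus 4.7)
The plan is to treat every entry of $P_1^{(0)}$ and $P_1^{(1)}$ as (possibly a smooth composition involving) a multiscale function $F(X,Y,z)$ realized at $Y=X/\sqrt{\mu}$, and then read off the $\mu^{-r/2}$ from the chain rule. The key identity is that for any multiscale $F\in C^r_{X,Y}$,
\begin{equation*}
\partial_X^\alpha\bigl(F(X,X/\sqrt{\mu},z)\bigr)=\sum_{\beta+\delta=\alpha}\binom{\alpha}{\beta}\mu^{-|\delta|/2}(\partial_X^\beta\partial_Y^\delta F)(X,X/\sqrt{\mu},z),
\end{equation*}
so that $\|F(\cdot,\cdot/\sqrt{\mu},z)\|_{W^{r,\infty}_X}\lesssim \mu^{-r/2}\|F\|_{C^r_{X,Y}}$. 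This reduces the lemma to showing that each coefficient, together with its $\partial_z$, is a $C^r$ multiscale function (or $C^r$ composition) with the claimed controlling norms.

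For $P_1^{(0)}$, the entries are explicit. The block $(\zeta_1-b)I$ depends only on $|\zeta_1|_{C^r}$ and $|b|_{C^r}$. The off-diagonal $-\nabla_Y\sigma_1=-(z+1)\nabla_Y\zeta_1+z\nabla b$ is linear in $z$, so both the entry and its $\partial_z$ are controlled by $|\nabla_Y\zeta_1|_{C^r}+|b|_{C^{r+1}}$. The bottom-right $(b-\zeta_1)/h_0^2$ is handled by the observation that $h_0=1+\varepsilon\zeta_0\geq\alpha_0$, so composition with the smooth function $t\mapsto 1/t^2$ on $[\alpha_0,\infty)$ yields a $C^r$ bound depending only on $1/\alpha_0$ and $|\zeta_0|_{C^{r+1}}$ (the extra derivative accommodates $\nabla\sigma_0$-type terms that appear when one tracks $\partial_z$). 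Assembling these pieces gives the stated estimate on $P_1^{(0)}$.

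For $P_1^{(1)}$, the off-diagonal entries $-\nabla_X\sigma_1=-(z+1)\nabla_X\zeta_1$ are controlled by $|\nabla_X\zeta_1|_{C^r}$. The genuine difficulty lies in $p_{22}^{(1)}$: despite the $\mu^{-1/2}$ prefactor it must be shown to be $O(1)$. For this I will write $h=h_0+\beta h_1$ and $\nabla\sigma=\nabla\sigma_0+\beta\nabla\sigma_1$, then Taylor-expand to first order in $\beta$ using $h_0\geq\alpha_0$ and $h\geq\alpha$, yielding
\begin{equation*}
\frac{1+\mu|\nabla\sigma|^2}{h}-\frac{1+\mu|\nabla\sigma_0|^2}{h_0}=\beta\Bigl(\frac{2\mu\nabla\sigma_0\cdot\nabla\sigma_1}{h_0}-\frac{(1+\mu|\nabla\sigma_0|^2)h_1}{h_0^2}\Bigr)+\beta^2 R,
\end{equation*}
where $R$ is a smooth function of $\zeta_0,\zeta_1,b$ and their gradients, bounded in terms of $1/\alpha$, $1/\alpha_0$. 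Dividing by $\beta$ and subtracting $(b-\zeta_1)/h_0^2=-h_1/h_0^2$ cancels the piece $-h_1/h_0^2$ coming from the $``1"$ in the numerator, leaving only terms of order $\mu$ at zeroth order in $\beta$ plus a remainder of order $\beta=\sqrt{\mu}$. Multiplication by $\mu^{-1/2}$ then produces an $O(1)$ expression built from the same building blocks as $P_1^{(0)}$ and from $\nabla_X\sigma_1$, controlled by the listed norms.

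The remaining step is routine: apply the chain-rule identity above to the resulting multiscale representation of each entry, use Leibniz to distribute $r$ derivatives, and invoke Fa\`a di Bruno / standard $C^r$-composition estimates for the $1/h^2$ and $1/h_0^2$ factors. The $\partial_z$ estimates are obtained identically, using that $\sigma_1$ is affine in $z$ and that $\partial_z$ commutes with $\nabla_X,\nabla_Y$. The main obstacle I anticipate is the bookkeeping in the Taylor expansion of $p_{22}^{(1)}$ — making sure the cancellation is clean enough that no leftover $\mu^{-1/2}$ survives, and verifying that the dependence on $1/\alpha$ (absent in $P_1^{(0)}$) genuinely enters here only through the denominator $h=h_0+\beta h_1$ in the nonlinear remainder $R$.
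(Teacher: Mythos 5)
The paper's proof is a single sentence ("The proof follows by inspecting the elements of $P_1$, $P_1^{(0)}$ and $P_1^{(1)}$"), so you are effectively supplying the inspection; your structure is the right one. The two main things to verify are (a) that each entry, viewed as a realized multiscale function, is $O(1)$ in $L^\infty_zL^\infty_X$ before taking any $X$-derivatives, and (b) that each further $\nabla_X$ costs at most $\mu^{-1/2}$; your chain-rule identity handles (b), and the only nontrivial part of (a) is $p_{22}^{(1)}$, which you correctly identify and correctly show is $O(1)$ via the cancellation of $-h_1/h_0^2$ against the subtracted $(b-\zeta_1)/h_0^2$.

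One bookkeeping caution: when you write the first-order Taylor coefficient as $\tfrac{2\mu\nabla\sigma_0\cdot\nabla\sigma_1}{h_0}$ and call this a "term of order $\mu$", note that the realized gradient $\nabla\sigma_1$ contains the rapid piece $\gamma^{-1}\nabla_Y\sigma_1=\mu^{-1/2}\nabla_Y\sigma_1$, so the term is in fact only $O(\sqrt\mu)$; likewise the remainder $R$ contains $\mu|\nabla\sigma_1|^2=O(1)$, so $\beta^2R=O(\mu)$ rather than some smaller order. This does not break the argument — after dividing by $\beta$ and multiplying by $\mu^{-1/2}$ both pieces land at $O(1)$, as you claim — but it is worth stating the scalings of $\nabla_X\sigma_1$ and $\nabla_Y\sigma_1$ separately (both $O(1)$), since lumping them into $\nabla\sigma_1$ obscures the bookkeeping. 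A cleaner route that avoids the Taylor expansion entirely is to manipulate the $(2,2)$ entry directly: writing
\begin{equation*}
\frac{1}{\beta}\Bigl(\frac{1}{h}-\frac{1}{h_0}\Bigr)=-\frac{h_1}{h_0^2}+\frac{\beta h_1^2}{h\,h_0^2},
\end{equation*}
one gets the exact closed form
\begin{equation*}
p_{22}^{(1)}=\frac{h_1^2}{h\,h_0^2}+\frac{|\nabla\sigma|^2}{h}-\frac{|\nabla\sigma_0|^2}{h_0},
\end{equation*}
which makes the $O(1)$ boundedness and the dependence on $1/\alpha$, $1/\alpha_0$, $\zeta_0$, $\zeta_1$, $b$ and their gradients manifest without tracking orders of a remainder; this also shows exactly where $1/\alpha$ enters (the single $1/h$ factor). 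Your observations about the $\partial_z$ estimates (using that $\sigma_0,\sigma_1$ are affine in $z$) and the chain-rule count for general $r$ are correct.
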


\begin{proof}
The proof follows by inspecting the elements of $P_1$, $P_1^{(0)}$ and $P_1^{(1)}$.
\end{proof}

Given the above decompositions of $P_0$ and $P_1$, the first term of
the LHS of \eqref{eq2.16bis} is
\[
    \nabla_{Y,z}\cdot P_0\nabla_{Y,z}\phi_1^{(0)} 
  = \nabla_{Y,z}\cdot P_0^{(0)}\nabla_{Y,z}\phi_1^{(0)}
    + \sqrt{\mu}\nabla_{Y,z}\cdot P_0^{(1)}\nabla_{Y,z}\phi_1^{(0)}
\]
and, using that $\gamma=\sqrt{\mu}$, the first term of the RHS of 
\eqref{eq2.16bis} is 
\begin{eqnarray*}
   \frac{1}{\gamma}\nam\cdot P_1\nam\phi_0 
    & = & \frac{1}{\gamma}\nam\cdot P_1^{(0)}\nam \psi_0
      + \frac{1}{\gamma}\nam\cdot P_1^{(0)}\nam(\phi_0-\psi_0) 
      + \nam\cdot P_1^{(1)}\nam\phi_0 \\
    & = & \nabla_{Y,z}\cdot P_1^{(0)}\left( \begin{matrix} \nabla\psi_0
       \\
   0\end{matrix}\right)\trace 
  + \nam_X\cdot P_1^{(0)}\left( \begin{matrix} \nabla\psi_0  \\
    0\end{matrix}\right)\trace \\
    & & + \frac{1}{\gamma}\nam\cdot P_1^{(0)}\nam(\phi_0-\psi_0) 
  + \nam\cdot P_1^{(1)}\nam\phi_0  ~.
\end{eqnarray*}

Extracting the principal term from these two expressions, we deduce that

\begin{equation}\label{eq2.16ter}
  \left\lbrace
  \begin{array}{l}
   \dsp \nabla_{Y,z}\cdot P_0^{(0)}\nabla_{Y,z}\phi_1^{(0)}\trace 
  + \sqrt{\mu}\nam\cdot P[\sigma]\nam \chi_1^{(1)}   \\
   \hspace{1cm}\dsp =
   - \nabla_{Y,z}\cdot P_1^{(0)} \left( \begin{matrix} \nabla\psi_0
   \\
   0\end{matrix}\right)\trace+\sqrt{\mu}\nam \cdot A + 
   \sqrt{\mu}g\trace ~,  \\
   ({\phi_1^{(0)}}_\trace+\sqrt{\mu}\chi_1^{(1)}) \,_{\vert_{z=0}}={\psi_1}_\trace ~, 
   \vspace{1mm}\\
   \dsp  -{\bf e_z}\cdot \big(P_0^{(0)}\nabla_{Y,z}\phi_1^{(0)}\trace\big)_{\vert_{z=-1}}
   -\sqrt{\mu}{\bf e_z}\cdot P[\sigma]\nam
  ( {\chi_1^{(1)}\,} _\trace )\vert_{z=-1}\\
   \hspace{1cm}
    \dsp={\bf e_z}\cdot P_1^{(0)}\left( \begin{matrix} \nabla\psi_0  \\
    0\end{matrix}\right)-\sqrt{\mu}{\bf e_z}\cdot {A}\}_{\vert_{z=-1}} ~,
\end{array}\right.
\end{equation}
with 
\[
     A = \widetilde{A} - \frac{1}{\gamma\sqrt{\mu}}P_1^{(0)}\nam
     (\phi_0-\psi_0) - \frac{1}{\sqrt{\mu}}P_1^{(1)}\nam\phi_0 
   + (2\nabla\sigma_0\cdot \nabla_Y\phi_1^{(0)} 
   - \sqrt{\mu}\frac{\abs{\nabla\sigma_0}^2}{h_0}\dz\phi_1^{(0)}){\bf e}_z
\] 
and with
\begin{eqnarray}
\label{defgbis}
  g_\trace & = & \widetilde{g}  
  - \nabla \dz \sigma_0\cdot \nabla_Y\phi_1^{(0)} 
  - (\nabla_X\zeta_1)\cdot \nabla\psi_0-(\zeta_1-b)\Delta\psi_0 ~.
\end{eqnarray}

In order to solve \eqref{eq2.16ter}, we construct $\phi_1^{(0)}$ as a 
solution of a \emph{cell problem} in the variables $Y$ and $z$ (the 
variable $X$ being considered a parameter). The resulting solution
cancels the higher order terms in \eqref{eq2.16ter}, and we are left 
with an equation for the corrector $\chi_1^{(1)}$.

\subsubsection{The cell problem}

We assume that $b, \zeta_1$ are  periodic with respect to the 
variable $Y$ and we seek a periodic function $\phi_1^{(0)}(\cdot,Y,z)$ 
that solves 
\begin{align} \label{eqv1-0}
   \left\lbrace
   \begin{array}{l}
    \nabla_{Y,z} \cdot P_0^{(0)} \nabla_{Y,z} \phi_1^{(0)} =
   - \nabla_{Y,z}\cdot P_1^{(0)} \left( \begin{matrix} \nabla\psi_0  \\
     0\end{matrix}\right)   \\
   \, \phi_1^{(0)}\,_{\vert_{z=0}}=\psi_1; \qquad 
   - {\bf e_z} \cdot P_0^{(0)}\nabla_{Y,z}{\phi_1^{(0)}}\,_{\vert_{z=-1}}
   = {\bf e_z} \cdot P_1^{(0)} \left( \begin{matrix} \nabla\psi_0
   \\
     0\end{matrix}\right) ~.
   \end{array} \right.
\end{align}
This choice of $\phi_1^{(0)}$ cancels the highest order terms in \eqref{eq2.16ter}.
Taking into account the definition of $P_0^{(0)}$ and $P_1^{(0)}$, 
we can further simplify (\ref{eqv1-0}) into
\begin{align} \label{eqv1}
    \left\lbrace
   \begin{array}{l}
     \dsp (h_0^2 \Delta_Y + \partial_z^2 )\phi_1^{(0)} =0\\
     \dsp {\phi_1^{(0)}}\,_{\vert_{z=0}}=\psi_1, \qquad \frac{1}{h_0} \partial_z
      \phi_1^{(0)}   \,_{\vert_{z=-1}}= \nabla_Y b \cdot \nabla \psi_0 .
   \end{array} \right.
\end{align}
We recall that the spaces $H^{r_1,r_2}$ that appear in the statement below are defined in (\ref{defHrr}).
\begin{prop}\label{lem2.4}
The solution $\phi_1^{(0)}$ of the cell problem \eqref{eqv1} is given in operator
notation by the expression 
\begin{equation}\label{Eqn:ExpressionForPhi_1}
   \phi_1^{(0)}(X, Y,z) = \frac{ \cosh(h_0(z+1)|D_Y|)}{\cosh(h_0|D_Y|)}
   \psi_1(X, Y) + \frac{\sinh(h_0 z|D_Y|)}{\cosh(h_0|D_Y|)} \frac{\nabla_Y}{|D_Y| }
   b(Y)\cdot \nabla \psi_0(X) ~.
\end{equation}
Assume that $h_0 = h_0(X) = 1+\eps\zeta_0$ satisfies the hypotheses
\eqref{eq12bis}, and let $r_0>d/2$ and $r\in {\mathbb N}$. Then, for all multiindex
$\alpha=(\alpha^1,\alpha^2)\in {\mathbb N}^{d}\times {\mathbb N}^d$
such that $\abs{\alpha^1}+\abs{\alpha^2}=r$, one has
\begin{equation}\label{Eqn:L2EstimateOfPhi0}
\begin{array}{lcl}
\dsp \Abs{\partial^{\alpha^1}_X\partial^{\alpha^2}_Y\nabla_X \phi_1^{(0)}}_{L^2_XL^\infty_{Y,z}}
&\leq& C(\abs{h_0}_{C^{r+1}})\big(\abs{\nabla_{X}\psi_1}_{H^{\abs{\alpha^1},r_0+\abs{\alpha^2}}}
+\abs{\nabla_X\psi_1}_{H^{0,r_0+r}}
+\abs{b}_{C^{r+2}}\abs{\nabla\psi_0}_{H^{r+1}}\big),\vspace{1mm}\\
\dsp \Abs{\partial^{\alpha^1}_X\partial^{\alpha^2}_Y\nabla_{Y,z} \phi_1^{(0)}}_{L^2_XL^\infty_{Y,z}}
&\leq& C(\abs{h_0}_{C^{r+1}})\big(\abs{\nabla_{Y}\psi_1}_{H^{\abs{\alpha^1},r_0+\abs{\alpha^2}}}
+\abs{\nabla_Y\psi_1}_{H^{0,r_0+r}}+\abs{b}_{C^{r+2}}\abs{\nabla\psi_0}_{H^{r+1}}\big).
\end{array}
\end{equation}
Moreover, derivatives of the multiscale function $\phi_1^{(0)}\,_\trace$ are
 controlled  as follows
\begin{equation}
\label{eqconseq}
\Abs{\Lambda^r \nam \phi_1^{(0)}}_{L^2}
\leq \mu^{-r/2}C(\abs{h_0}_{C^{r+1}})
 \times \big(\mu^{\frac{r+1}{2}}\abs{\nabla_X\psi_1}_{H^{r,r_0}}
+\abs{\nabla_{Y}\psi_1}_{H^{0,r_0+r}}
+\abs{b}_{C^{r+2}}\abs{\nabla\psi_0}_{H^{r+1}}\big),
\end{equation}
and if $r\geq 1$, the same upper bound holds for $\frac{1}{\sqrt{\mu}}\Abs{\Lambda^{r-1}\dz \nam\phi_1^{(0)}}_{L^2}$.
\end{prop}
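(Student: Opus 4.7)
My plan is to obtain \eqref{Eqn:ExpressionForPhi_1} explicitly by Fourier expansion in the fast variable $Y$, derive \eqref{Eqn:L2EstimateOfPhi0} by combining pointwise bounds for the resulting Fourier symbols with Sobolev embedding on $\mathbb T^d$, and finally pass from this to the multiscale bound \eqref{eqconseq} via the chain rule for the trace $Y=X/\gamma$.

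\emph{Derivation of \eqref{Eqn:ExpressionForPhi_1}.} Treating $X$ as a parameter, I would expand $\phi_1^{(0)}(X,\cdot,z)$, $\psi_1(X,\cdot)$ and $b$ in Fourier series on $\mathbb T^d$. For each mode $k\in\Z^d$ the cell problem \eqref{eqv1} becomes the two-point boundary value problem
\[
\partial_z^2\widehat{\phi_1^{(0)}}(X,k,z)-h_0(X)^2\abs{k}^2\,\widehat{\phi_1^{(0)}}(X,k,z)=0,
\]
with $\widehat{\phi_1^{(0)}}(X,k,0)=\widehat{\psi_1}(X,k)$ and $\frac{1}{h_0}\partial_z\widehat{\phi_1^{(0)}}(X,k,-1)=ik\cdot\nabla\psi_0(X)\,\widehat b(k)$. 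For $k\neq 0$, writing the general solution as a linear combination of $\cosh(h_0(z+1)\abs{k})$ and $\sinh(h_0 z\abs{k})$ and solving the resulting $2\times 2$ system for the coefficients produces exactly \eqref{Eqn:ExpressionForPhi_1} upon Fourier synthesis; the zero mode contributes only $\widehat{\psi_1}(X,0)$, since the coefficient of the second term vanishes there.

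\emph{Symbol bounds and the $L^2_XL^\infty_{Y,z}$ estimates.} For $z\in[-1,0]$ and $h_0\geq\alpha_0>0$, the Fourier multipliers
\[
m_1(X,k,z)=\frac{\cosh(h_0(z+1)\abs{k})}{\cosh(h_0\abs{k})},\qquad m_2(X,k,z)=\frac{\sinh(h_0 z\abs{k})}{\cosh(h_0\abs{k})}\frac{ik}{\abs{k}}
\]
are pointwise bounded by $1$, and each $\partial_{h_0}^j m_\ell$ still defines a bounded Fourier multiplier on $\mathbb T^d$, uniformly in $z\in[-1,0]$, because the polynomial growth in $\abs{k}$ produced by differentiation is absorbed by the exponential decay of $1/\cosh(h_0\abs{k})$. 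I would then differentiate \eqref{Eqn:ExpressionForPhi_1} and apply Leibniz, redistributing $\partial_X^{\alpha^1}\nabla_X$ either onto $h_0(X)$ through the symbol or onto the inputs $\psi_1(X,Y)$ and $\nabla\psi_0(X)$, and $\partial_Y^{\alpha^2}$ onto $\psi_1$ or $b$ (since the symbols are $Y$-independent, $\partial_Y$ commutes through them). The Sobolev embedding $H^{r_0}(\mathbb T^d)\hookrightarrow L^\infty(\mathbb T^d)$ for $r_0>d/2$ converts $L^\infty_Y$ into $H^{r_0}_Y$, and the product estimate $\abs{fg}_{H^s}\lesssim\abs{f}_{H^s}\abs{g}_{W^{\abs{s},\infty}}$ handles products between $X$-dependent coefficients and $Y$-periodic inputs, yielding both lines of \eqref{Eqn:L2EstimateOfPhi0}; the distinction between them is whether the outermost gradient is $\nabla_X$ (producing $\nabla_X\psi_1$) or $\nabla_{Y,z}$ (which when applied to the multiplier $m_1$ against $\psi_1$ yields $\nabla_Y\psi_1$).

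\emph{Multiscale estimate \eqref{eqconseq} and main obstacle.} The elementary inequality $\abs{f(X,X/\gamma,z)}_{L^2_X L^\infty_z}\leq\abs{f}_{L^2_X L^\infty_{Y,z}}$, combined with the chain rule $\nabla_X\bigl(f(X,X/\gamma,z)\bigr)=(\nabla_X f)(X,X/\gamma,z)+\gamma^{-1}(\nabla_Y f)(X,X/\gamma,z)$, converts the previous step into \eqref{eqconseq}: each $\Lambda$ on the trace costs at most $\gamma^{-1}=\mu^{-1/2}$, yielding the prefactor $\mu^{-r/2}$ after $r$ derivatives, while the $\sqrt\mu$ in $\nam=(\sqrt\mu\nabla^T,\partial_z)^T$ produces the $\mu^{(r+1)/2}$ weight in front of the $\nabla_X\psi_1$ contribution. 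The Riesz-type estimate on $\Lambda^{r-1}\partial_z\nam\phi_1^{(0)}$ is identical since $\partial_z$ is a slow-variable derivative and contributes no negative power of $\mu$. The genuinely delicate part is the previous step: one must carefully track how $\partial_X^{\alpha^1}$ splits between the $X$-dependent symbol $m_\ell(X,\cdot,z)$ and the inputs $\psi_1,\nabla\psi_0,b$ in order to match the anisotropic spaces $H^{r_1,r_2}$ on the right-hand side of \eqref{Eqn:L2EstimateOfPhi0}. The uniform decay of $1/\cosh(h_0\abs{k})$ together with the cancellation $\partial_{h_0}m_1\vert_{z=0}=0$ is what keeps all constants controlled by $C(\abs{h_0}_{C^{r+1}})$ uniformly in $z\in[-1,0]$ and prevents any additional loss in $\mu$.
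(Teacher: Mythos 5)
Your proposal follows essentially the same route as the paper: solve the cell problem by Fourier synthesis in $Y$ to obtain \eqref{Eqn:ExpressionForPhi_1}, control the $X$-dependent Fourier multipliers $C_1,C_2$ in the Sobolev scale, use $H^{r_0}(\T^d)\hookrightarrow L^\infty(\T^d)$ to pass to $L^2_X L^\infty_{Y,z}$ bounds, and finally convert to \eqref{eqconseq} via the chain rule for the trace $Y=X/\gamma$, with each $\partial_Y$ costing $\mu^{-1/2}$ and the $\sqrt\mu$ in $\nam$ supplying the extra weight on the $\nabla_X\psi_1$ contribution.

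The one place where you diverge from the paper is in the symbol analysis. You assert that $\partial_{h_0}^j m_\ell$ remain \emph{uniformly bounded} multipliers for all $(z,k)\in[-1,0]\times\Z^d$; the paper instead works with the coarser bound $|\partial_{h_0}^j m_\ell|\lesssim|k|^j$ and absorbs the resulting $|k|$-losses into higher $Y$-regularity of $\psi_1$ — this is precisely what produces the second terms $|\nabla_{X}\psi_1|_{H^{0,r_0+r}}$, $|\nabla_{Y}\psi_1|_{H^{0,r_0+r}}$ in \eqref{Eqn:L2EstimateOfPhi0}. Your sharper claim is in fact correct: writing $\nu=h_0|k|$ and $u=(1-s)\nu$ with $s=z+1\in[0,1]$, one has $C_1=e^{-u}\frac{1+e^{-2s\nu}}{1+e^{-2\nu}}$ and $\nu^j\partial_\nu^j C_1$ is uniformly bounded, hence $\partial_{h_0}^j C_1\lesssim h_0^{-j}$ uniformly; the analogous cancellation holds for $C_2$. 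Your argument would therefore give a slightly stronger estimate (without the second terms), which of course implies the stated one. One caveat on your narrative: the boundedness is \emph{not} simply because "polynomial growth in $|k|$ is absorbed by the exponential decay of $1/\cosh(h_0|k|)$" — when $z$ is close to $0$ the numerator $\cosh(h_0(z+1)|k|)$ nearly cancels the decay in the denominator, and it is really the algebraic cancellation $\partial_{h_0}^jC_1\vert_{z=0}=0$ (which you mention almost in passing) that does the work. Also, your statement that "$\partial_z$ is a slow-variable derivative and contributes no negative power of $\mu$" is right in the sense that $z$ carries no $\gamma^{-1}$ scaling, but the mechanism for the $\Lambda^{r-1}\partial_z\nam$ bound, as in the paper, is that $\partial_z\phi_1^{(0)}$ is effectively $|D_Y|\phi_1^{(0)}$, costing one more $Y$-derivative in the $H^{r_1,r_2}$ norms, which combined with the $\mu^{-1/2}$ prefactor and the drop from $\Lambda^r$ to $\Lambda^{r-1}$ reproduces exactly the same $\mu^{-r/2}$ bound. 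Finally, you do not explicitly address the $b\cdot\nabla\psi_0$ contribution, which the paper handles by estimating the Riesz-type singular integral on $L^\infty_Y$ at the cost of one derivative, giving $|b|_{C^{r+2}}$ rather than $|b|_{C^{r+1}}$; this is a small detail but would need to be included to match the stated constants.
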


It is of note that the solution of the cell problem is a multiscale
expression that can be differentiated arbitrarily many times with 
respect to the variables $(X,Y,z)$  without developing singular 
behavior in the limit as $\mu \to 0$. 

\begin{proof} 
Decomposing the function $\phi_1^{(0)}$ in Fourier modes 
$\widehat \phi_{1k}^{(0)}$ with respect to the $Y$ variable, we find
\begin{equation}
    \widehat \phi_{1k}^{(0)} = A_k e^{h_0 z|k|} +B_k e^{-h_0 z|k|}
   \nonumber
\end{equation}
with coefficients
\begin{align}
   A_k & = \frac{1}{e^{h_0|k|} + e^{-h_0|k|}} \big( i
   \frac{k}{|k|}\cdot \nabla \psi_0 \widehat b_k  + e^{h_0|k|} \widehat\psi_{1k}
    \big)    \nonumber \\
   B_k & = \frac{1}{e^{h_0|k|} + e^{-h_0|k|}} \big(- i
    \frac{k}{|k|}\cdot \nabla \psi_0 \widehat b_k  + e^{-h_0|k|} \widehat\psi_{1k}
    \big) ~.  \nonumber  
\end{align}
After substitution, the solution $\phi_1^{(0)}$ is written using operator notation
as in the statement \eqref{Eqn:ExpressionForPhi_1} of the proposition.\\
For the proof of the estimates of the derivatives of $\phi_1^{(0)}(X,Y,z)$, the expression \eqref{Eqn:ExpressionForPhi_1} is conveniently 
written in operator notation as 
\[
   C_1(h_0,z, D_Y) \psi_1(X,Y) + C_2(h_0,z,D_Y) b(Y) \cdot\nabla\psi_0(X) ~,
\]
where the components are 
\[
    C_1(h_0,z,D_Y) = \frac{ \cosh(h_0(z+1)|D_Y|)}{\cosh(h_0|D_Y|)} ~,
    \qquad 
    C_2(h_0,z,D_Y) =  \frac{\sinh(h_0z|D_Y|)}{\cosh(h_0|D_Y|)}
        \frac{\nabla_Y}{|\nabla_Y|} ~.
\]
 For the first term, we remark that the Sobolev embedding $H^{r_0}_Y\subset L^\infty_Y$ yields
$$
\Abs{\partial^{\alpha^1}_X\partial^{\alpha^2}_Y\underline{\nabla}_k C_1(h_0,z, D_Y) \psi_1}_{L^2_XL^\infty_{Y,z}}^2\lesssim \int_{\R^d}\big[\sup_{z}(\abs{(\partial^{\alpha^1}_X\partial^{\alpha^2}_Y\underline{\nabla}_k C_1\psi_1)(X,\cdot,z)}_{H^{r_0}_Y})\big]^2dX,\qquad (k=0,1)
$$
where $\underline{\nabla}_0$ stands for $\nabla_{X}$ and $\underline{\nabla}_1$ for $\nabla_{Y,z}$.\\
Now, by Plancherel formula (with respect to $Y$), one easily checks that
\begin{eqnarray*}
\sup_z \abs{(\partial^{\alpha^1}_X\partial^{\alpha^2}_Y{\nabla}_{X} C_1\psi_1)(X,\cdot,z)}_{H^{r_0}_Y}
&\leq& C(\abs{h_0}_{H^{r+1}})\Big( \sum_{\beta\leq \alpha^1 }\abs{\partial_X^\beta \nabla_X\psi_1(X,\cdot)}_{H^{r_0+r-\abs{\beta}}_Y}  + \abs{\nabla_Y \psi_1(X,\cdot)}_{H^{r_0+r}_Y}\Big),\\
\sup_z \abs{(\partial^{\alpha^1}_X\partial^{\alpha^2}_Y{\nabla}_{Y,z} C_1\psi_1)(X,\cdot,z)}_{H^{r_0}_Y}&\leq& C(\abs{h_0}_{H^{r+1}}) \sum_{\beta\leq \alpha^1 }\abs{\partial_X^\beta \nabla_Y\psi_1(X,\cdot)}_{H^{r_0+r-\abs{\beta}}_Y}
\end{eqnarray*}
Plugging these inequalities into the integral above then yields the desired result,
\begin{eqnarray*}
\Abs{\partial^{\alpha^1}_X\partial^{\alpha^2}_Y\nabla_{X} C_1(h_0,z, D_Y) \psi_1}_{L^2_XL^\infty_{Y,z}}
& \leq& C(\abs{h_0}_{C^{r+1}})\big( \abs{\nabla_{X,Y}\psi_1}_{H^{\abs{\alpha^1},r_0+\abs{\alpha^2}}}
+\abs{\nabla_{X,Y}\psi_1}_{H^{0,r_0+r}}\big),\\
\Abs{\partial^{\alpha^1}_X\partial^{\alpha^2}_Y\nabla_{Y,z} C_1(h_0,z, D_Y) \psi_1}_{L^2_XL^\infty_{Y,z}}
& \leq& C(\abs{h_0}_{C^{r+1}})  \big( \abs{\nabla_{Y}\psi_1}_{H^{\abs{\alpha^1},r_0+\abs{\alpha^2}}}
+\abs{\nabla_Y\psi_1}_{H^{0,r_0+r}}\big)  .
\end{eqnarray*}
For the control of the derivatives of $C_2(h_0,z,D_Y)b(Y)\cdot \nabla\psi_0$, we easily get that
$$
\Abs{\partial^{\alpha^1}_X\partial^{\alpha^2}_Y\nabla_{X,Y,z} C_2(h_0,z, D_Y) b(Y)\cdot \nabla\psi_0}_{L^2_XL^\infty_{Y,z}}^2\leq  C(\abs{h_0}_{C^{r+1}})
\abs{b}_{C^{r+2}}\abs{\nabla\psi_0}_{H^{r+1}},
$$
where  we have (somewhat non-optimally) estimated the action of 
singular integral operators on $L^\infty_Y$ at the cost of one 
derivative. This ends the proof of (\ref{Eqn:L2EstimateOfPhi0}).\\
For the proof of (\ref{eqconseq}), we remark that the  l.h.s. can be controlled as
$$
\Abs{\Lambda^r \nam (\phi_1^{(0)}\,_\trace)}_{L^2}\leq \sum_{\alpha^1,\alpha^2}
\mu^{-\frac{\abs{\alpha^2}}{2}}
\Abs{\partial^{\alpha^1}_X\partial^{\alpha^2}_Y (\nabla_{Y,z}+\nam_X)\phi_1^{(0)}}_{L^2_XL^\infty_{Y,z}}, 
$$
where the summation is over $(\alpha^1,\alpha^2)\in {\mathbb N}^d\times {\mathbb N}^d$ such that $\abs{\alpha^1}+\abs{\alpha^2}\leq r$. The result follows therefore from
 (\ref{Eqn:L2EstimateOfPhi0}) and a straightforward interpolation between the lowest and highest terms in terms of $\mu$. In order to  prove that the same bound holds for $\frac{1}{\sqrt{\mu}}\Abs{\Lambda^{r-1}\dz \nam\phi_1^{(0)}}_{L^2}$
when $r\geq 1$,  we just have to remark that
$$
\Abs{\Lambda^{r-1} \dz \nam (\phi_1^{(0)}\,_\trace)}_{L^2}\leq \sum_{\alpha^1,\alpha^2}
\mu^{-\frac{\abs{\alpha^2}}{2}}
\Abs{\partial^{\alpha^1}_X\partial^{\alpha^2}_Y (\nabla_{Y,z}+\nam_X)\dz\phi_1^{(0)}}_{L^2_XL^\infty_{Y,z}},
$$
where the summation is over $(\alpha^1,\alpha^2)\in {\mathbb N}^d\times {\mathbb N}^d$ such that $\abs{\alpha^1}+\abs{\alpha^2}\leq r-1$. From the explicit expression of $\phi_1^{(0)}$, we can check it is possible to replace
$\dz \phi_1^{(0)}$ by $\abs{D_Y}\phi_1^{(0)}$ in the above summation, so that the result follows as for
 (\ref{eqconseq}).
\end{proof}

\subsubsection{Estimate on the corrector  $\chi_1^{(1)}$}

With $\phi_1^{(0)}$ as in the previous section, the system
(\ref{eq2.16ter}) reduces to the following boundary value 
problem for $\chi_1^{(1)}$, 
\begin{equation}\label{eq3.17new}
   \left\lbrace
   \begin{array}{l}
     \dsp \nam\cdot P[\sigma]\nam \chi_1^{(1)} =\nam \cdot
     A + g  ~,   \\
     \chi_1^{(1)} \,_{\vert_{z=0}}=0,\qquad
     \dsp 	
    - {\bf e_z}\cdot P[\sigma]\nam \chi_1^{(1)}\,_{\vert_{z=-1}}
   = - {\bf e_z}\cdot {A} \,_{\vert_{z=-1}}  ~.
   \end{array}\right.
\end{equation}

\begin{prop}\label{lemmchi}
Let $r\in {\mathbb N}$ and denote $(r-1)_+=\max\{r-1,0\}$ and
$\widetilde{r}=(r-1)_++1$.  The solution $\chi_1^{(1)}$ of
\eqref{eq3.17new} satisfies the estimates 
\begin{eqnarray*}
 \Vert\Lambda^r\nam\chi_1^{(1)}\Vert_{L^2} & \leq & \mu^{-\frac{r}{2}}M_r
   \big(\abs{\nabla\psi_0}_{H^{r+1}}+\mu^{r/2} 
   {\abs{\nabla_{X,Y}\psi_1}_{H^{r+1,r_0}}+\abs{\nabla_{X,Y}\psi_1}_{H^{1,r_0+r}}\big)},\\ 
 \Vert\Lambda^{r-1}\dz \nam\chi_1^{(1)}\Vert_{L^2} & \leq & \mu^{-\frac{(r-1)_+}{2}}M_r
   \big(\abs{\nabla\psi_0}_{H^{r+1}}+\mu^{\frac{(r-1)_+}{2}} 
     {\abs{\nabla_{X,Y}\psi_1}_{H^{\widetilde{r}+1,r_0}} 
   + \abs{\nabla_{X,Y}\psi_1}_{H^{1,r_0+\widetilde{r}}}\big)} 
   ~, 
\end{eqnarray*}
with
$M_r = C(\frac{1}{\alpha},\frac{1}{\alpha_0},
  \abs{\zeta_0}_{C^{r+1}\cap C^2},\abs{\zeta_1}_{C^{r+1}\cap C^2},
 \abs{b}_{C^{r+1}\cap C^2})$.
\end{prop}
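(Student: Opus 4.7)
The system \eqref{eq3.17new} is an inhomogeneous version of the elliptic problem \eqref{eq2.14} analyzed in Proposition \ref{lemm2}, with interior source $\nam\cdot A + g$ and nonzero conormal datum $-{\bf e}_z\cdot A\vert_{z=-1}$. I would run the same elliptic energy argument used for $\phi_0$ in Proposition \ref{lemm3}, with the real work concentrated in the estimation of the source terms $A$ and $g$.

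The $r=0$ case is immediate: multiplying \eqref{eq3.17new} by $\chi_1^{(1)}$, integrating by parts over $\Omega_0$, and combining the coercivity of $P[\sigma]$ with Poincaré (available because of the Dirichlet condition at $z=0$) yields $\Abs{\nam\chi_1^{(1)}}_{L^2}\leq C(1/\alpha)(\Abs{A}_{L^2}+\Abs{g}_{L^2})$. Unraveling the expression for $A$ just after \eqref{eq2.16ter}, it is built from $P_1^{(0)}\nam\phi_1^{(0)}$, $P_0(\nabla_X\phi_1^{(0)},0)^T$, $(\gamma\sqrt\mu)^{-1}P_1^{(0)}\nam(\phi_0-\psi_0)$, $\mu^{-1/2}P_1^{(1)}\nam\phi_0$, and lower-order pieces in $\phi_1^{(0)}$; $g$ as given in \eqref{defgbis} is handled analogously. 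Each factor is bounded using Lemma \ref{lemmeP1} for $P_1^{(0)}, P_1^{(1)}$, Proposition \ref{lem2.4} for $\nam\phi_1^{(0)}$, and Proposition \ref{lemm3} for $\nam\phi_0$ and $\nam(\phi_0-\psi_0)$. The key observation is that the singular prefactors $1/\gamma,\,1/\sqrt\mu$ are exactly compensated: the $\mu$-gain in Proposition \ref{lemm3} (ii) absorbs $(\gamma\sqrt\mu)^{-1}=\mu^{-1}$, and the $\sqrt\mu$-gain in Proposition \ref{lemm3} (i) absorbs the $\mu^{-1/2}$ in front of $P_1^{(1)}\nam\phi_0$.

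For general $r$, I apply $\Lambda^r$ to the equation and rerun the energy estimate, the commutator $[\Lambda^r,P[\sigma]]$ being controlled by $\abs{fg}_{H^r}\lesssim \abs{f}_{H^r}\abs{g}_{W^{r,\infty}}+\abs{f}_{L^\infty}\abs{g}_{H^r}$. The oscillatory part $P_1(X,X/\gamma)$ produces $\mu^{-k/2}$ per $X$-derivative, quantified in $L^\infty$ by Lemma \ref{lemmeP1}; combined with the scaling $\Abs{\Lambda^r\nam\phi_1^{(0)}}_{L^2}\lesssim \mu^{-r/2}(\cdots)$ from \eqref{eqconseq}, this reproduces the prefactor $\mu^{-r/2}$ together with the two mixed norms $\abs{\nabla_{X,Y}\psi_1}_{H^{r+1,r_0}}$ and $\abs{\nabla_{X,Y}\psi_1}_{H^{1,r_0+r}}$ seen in the statement, the latter reflecting the $\nabla_Y$ action in the explicit formula \eqref{Eqn:ExpressionForPhi_1}. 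For the second bound, the tangential piece of $\Lambda^{r-1}\dz\nam\chi_1^{(1)}$ (namely $\sqrt\mu\,\Lambda^{r-1}\nabla\dz\chi_1^{(1)}$) is immediately controlled by the first bound since $\Lambda^{-1}\nabla$ is $L^2$-bounded; for the normal component $\Lambda^{r-1}\dz^2\chi_1^{(1)}$ I extract $\dz^2\chi_1^{(1)}$ algebraically from \eqref{eq3.17new} as in \eqref{madrid}, apply Lemma \ref{lemmfrac} to handle the rational factor $(1+\mu\vert\nabla\sigma\vert^2)/h$, and invoke the just-established $r$-th order bound; losing one $X$-derivative in this reduction accounts for the exponents $(r-1)_+$ and $\widetilde{r}$ in the stated estimate.

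The main obstacle is the bookkeeping in the second paragraph: every singular $\mu$-power appearing in $A$ and $g$ has to be matched with a $\mu$-gain coming either from the asymptotic expansion of $\phi_0$ around $\psi_0$ (Proposition \ref{lemm3}) or from the structure of $P_1^{(1)}$ (Lemma \ref{lemmeP1}), since any uncompensated factor would destroy the uniform-in-$\mu$ character of the estimate and, with it, the entire consistency analysis that follows.
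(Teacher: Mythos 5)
Your overall strategy is the same as the paper's: an energy estimate for the $r=0$ case, a commutator argument for general $r$ with the $\mu^{-k/2}$ factors tracked through Lemma~\ref{lemmeP1} and the bounds of Propositions~\ref{lemm3} and~\ref{lem2.4}, and an algebraic extraction of $\partial_z^2\chi_1^{(1)}$ from the equation for the second bound. The compensation of the singular prefactors $(\gamma\sqrt\mu)^{-1}$ and $\mu^{-1/2}$ by the $\mu$-gains in Proposition~\ref{lemm3} is exactly the right observation.

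There is, however, one concrete step that fails as written. You invoke Lemma~\ref{lemmfrac} to handle the rational factor in the expression for $\partial_z^2\chi_1^{(1)}$, in direct analogy with what is done for $\phi_0$ via (\ref{madrid}). But the situation is not analogous: in (\ref{madrid}) the relevant $G=\mu\vert\nabla\sigma_0\vert^2$ is built from $\sigma_0=(z+1)\zeta_0$, which is a slow function, so $\Vert G\Vert_{L^\infty_z W^{\vert r-1\vert,\infty}_X}$ is $\mu$-independent and Lemma~\ref{lemmfrac} applies cleanly. In the equation for $\chi_1^{(1)}$, the rational factor involves $\sigma=\sigma_0+\sqrt\mu\,\sigma_1$ with $\sigma_1=(z+1)\zeta_1-zb_\gamma$ a multiscale function. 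Since each $X$-derivative on a multiscale quantity costs $\gamma^{-1}=\mu^{-1/2}$, $\Vert\mu\vert\nabla\sigma\vert^2\Vert_{W^{r-1,\infty}_X}$ grows like $\mu^{-(r-1)/2}$, and the constant $C(\Vert G\Vert_{L^\infty_z W^{\vert r-1\vert,\infty}_X})$ delivered by Lemma~\ref{lemmfrac} is then an uncontrolled nonlinear function of a diverging argument. One needs a multiscale refinement of Lemma~\ref{lemmfrac} that extracts the powers $\mu^{-k/2}$ linearly as a finite sum $\sum_{k\le (r-1)_+}\mu^{-k/2}\Vert\Lambda^{(r-1)_+-k}F\Vert_{L^2}$ weighted by $C(\Vert G\Vert_{L^\infty_z W^{(r-1)_+,\infty}_{X,Y}})$, and the paper proves exactly such a lemma (Lemma~\ref{lemfrac2}) for this purpose. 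The exponents $(r-1)_+$ and $\widetilde r$ in the second bound come from this linear extraction, not merely from ``losing one $X$-derivative'' as you suggest.

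A smaller omission: after expressing $\partial_z^2\chi_1^{(1)}$ via the equation, the right-hand side of (\ref{eq3.17new}) contains $\nam\cdot A+g$, so you also need control of $\Vert\Lambda^m\partial_z A\Vert_{L^2}$, which does not follow from the bound on $\Vert\Lambda^r A\Vert_{L^2}$ alone; the paper isolates this as a separate lemma and bounds it using Proposition~\ref{lem2.4} on $\phi_1^{(0)}$. Once the multiscale fraction lemma and the $\partial_z A$ estimate are in place, your induction closes and the proof matches the paper's.
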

\begin{proof}
The method follows the recipe of classical energy estimates, 
paying attention to the rapidly oscillating coefficients and 
their commutators with differential operators. Indeed
multiplying (\ref{eq3.17new}) by $ \chi_1^{(1)}$ and integrating by parts yields
\begin{eqnarray*}
  \int_{\Omega_0}P[\sigma]\nam \chi_1^{(1)}\cdot\nam \chi_1^{(1)} \,dzdX  = \int_{\Omega_0}A\cdot \nam\chi_1^{(1)}\, dzdX 
     - \int_{\Omega_0}g \chi_1^{(1)}\, dzdX  ~.
\end{eqnarray*}
The matrix of coefficients $P[\sigma]$ is coercive under condition 
\eqref{eq12} (Proposition 2.3(iii) of \cite{AL}), and is uniformly 
so with regard to the small parameters, as the scaling regime we 
are studying imposes that $\beta = \gamma$.\\
Using the Cauchy-Schwarz inequality and Poincar\'e inequality, 
one finds, as in the proof of Proposition \ref{lemm3}, that
\begin{equation}\label{annivaiantze}
  \Vert\nam\chi_1^{(1)}\Vert_{L^2} \leq M_0
   \big(\Vert A\Vert_{L^2} + \Vert g\Vert_{L^2} \big)
\end{equation}
with $M_0$ as in the statement of the proposition.\\
For the general case $r\in {\mathbb N}$, the procedure is exactly the same replacing
$g$ by $\Lambda^r g$  and
$A$ by $A^{(r)}$, with
\[
A^{(r)}=\Lambda^r A-[\Lambda^r,P[\sigma]]\nam\chi_1^{(1)};
\]
in particular, it follows from classical commutator estimates and Lemma \ref{lemmeP1} that
\[
\Vert A^{(r)}\Vert_{L^2}\leq \Vert \Lambda^r
A\Vert_{L^2}+C(\frac{1}{\alpha_0},\frac{1}{\alpha},\vert
\zeta_0\vert_{C^{r+1}},\vert \zeta_1\vert_{C^{r+1}},\vert
b\vert_{C^{r+1}})\sum_{k=1}^{r} \mu^{-k/2}\Vert
\Lambda^{r-k}\nam\chi_1^{(1)}\Vert_{L^2} 
\]
so that (\ref{annivaiantze}) yields in this configuration
\begin{equation}\label{estchi}
   \Vert\Lambda^r\nam\chi_1^{(1)}\Vert_{L^2} \leq M_0
   \big(\Vert \Lambda^r A\Vert_{L^2} + \Vert \Lambda^r g\Vert_{L^2} \big)
   + M_r\sum_{k=1}^r \mu^{-k/2}\Vert \Lambda^{r-k}\nam\chi_1^{(1)}\Vert_{L^2} ~.
\end{equation}
We therefore need the following lemma.
\begin{lemm}\label{lemmcontrolA}
The following estimate holds,
\[
    \Abs{\Lambda^r A}_{L^2} +\Abs{\Lambda^r g}_2
  \leq \mu^{-r/2}M_r\big(\abs{\nabla\psi_0}_{H^{r+1}}+
  \mu^{\frac{r}{2}}(\abs{\nabla_{X}\psi_1}_{H^{r,r_0}}+\abs{\nabla_{Y}\psi_1}_{H^{r+1,r_0}})
+\abs{\nabla_{X}\psi_1}_{H^{0,r_0+r}} 
+
\abs{\nabla_{Y}\psi_1}_{H^{1,r_0+r}}\big)
\]
\end{lemm}

\begin{proof}
- \emph{Control of $\Vert \Lambda^r A\Vert_{L^2}$.} Recall that
\begin{eqnarray}
\nonumber 
   A &=& - P_1\nam\phi_1^{(0)} - \frac{1}{{\mu}}
      P_1^{(0)}\nam (\phi_0-\psi_0) - \frac{1}{\sqrt{\mu}}P_1^{(1)}\nam\phi_0\\
& &- P_0\left(\begin{array}{c}\nabla_X\phi_1^{(0)}\\0\end{array}\right)_\trace
\label{Eqn:DefinitionOfA}
+\big(2\nabla\sigma_0\cdot \nabla_Y
\phi_1^{(0)}-\sqrt{\mu}\frac{\abs{\nabla\sigma_0}^2}{h_0}\dz
\phi_1^{(0)}\big)_\trace{\bf e}_z ~. 
\end{eqnarray}
A direct application of the chain rule gives
\begin{eqnarray}
\nonumber
   \Abs{\Lambda^r A}_{L^2} &\leq&
   \sum_{k=0}^r\Big[\Abs{P_1}_{L^\infty_zW^{k,\infty}}
   \Abs{\Lambda^{r-k} \nam\phi_1^{(0)}}_{L^2}  
   + \Abs{P_1^{(0)}}_{L^\infty_zW^{k,\infty}}
   \Abs{\frac{1}{\mu}\Lambda^{r-k}\nam (\phi_0-\psi_0)}_{L^2} \\ 
\nonumber
   & &\qquad + \Abs{P_1^{(1)}}_{L^\infty_zW^{k,\infty}}
   \Abs{\frac{1}{\sqrt{\mu}}\Lambda^{r-k} \nam \phi_0}_{L^2}\Big]\\ 
& &
\label{Eqn:A-Estimate}
+\Abs{P_0}_{L^\infty_zW^{r,\infty}}\Abs{\Lambda^r\nabla_X\phi_1^{(0)}}_{L^2}
+C(\abs{\zeta_0}_{C^{r+1}},\frac{1}{\alpha_0})\Abs{\Lambda^r\nabla_{Y,z}\phi_1^{(0)}}_{L^2}.
\end{eqnarray}
Using (\ref{eq15}) and Lemma \ref{lemmeP1} to control the norms of
$P_0$ and $P_1$, and Proposition \ref{lemm3} to control 
the second and third terms in the above expression, we find
\[
   \Abs{\Lambda^r A}_{L^2} \leq
   M_r\big(\mu^{-r/2}\abs{\nabla\psi_0}_{H^{r+1}} 
   + \sum_{k=0}^r \mu^{-k/2}\Abs{\Lambda^{r-k} \nam \phi_1^{(0)}}_2  
   + \Abs{\Lambda^r(\nabla_X{\phi_1^{(0)}}_\trace)}_2 
   + \Abs{\Lambda^r(\nabla_{Y,z}{\phi_1^{(0)}}_\trace)}_2\big). 
\]
We now control $\Abs{\Lambda^{r-k} \nam \phi_1^{(0)}}_2 $ through (\ref{eqconseq}), while 
$\Abs{\Lambda^r(\nabla_X\phi_1^{(0)}\,_\trace)}_2$ and
$\Abs{\Lambda^r(\nabla_{Y,z}\phi_1^{(0)}\,_\trace)}_2$ 
can be controled using (\ref{Eqn:L2EstimateOfPhi0}) and proceeding as
in the proof of (\ref{eqconseq}). This yields 
\[
    \Abs{\Lambda^r A}_{L^2}
  \leq \mu^{-r/2}M_r\big(\abs{\nabla\psi_0}_{H^{r+1}}+
  \mu^{\frac{r}{2}}\abs{\nabla_{X}\psi_1}_{H^{r,r_0}}+\abs{\nabla_{X,Y}\psi_1}_{H^{0,r_0+r}}\big) 
  ~. 
\]
- \emph{Control of $\Abs{\Lambda^r g}_{L^2}$.} We first recall that
\[
  g = -\left(\begin{array}{c}\nabla_X\\0\end{array}\right) 
\cdot P_0 (\nabla_{Y,z} \phi_1^{(0)})_\trace  +
\Big(-\nabla\dz\sigma_0\cdot \nabla_Y
\phi_1^{(0)}-\nabla_X\zeta_1\cdot
\nabla\psi_0-(\zeta_1-b)\Delta\psi_0\Big)_\trace ~. 
\]
We get therefore
\begin{eqnarray*}
\nonumber
\Abs{\Lambda^r g}_2&\leq&
\mu^{-r/2}M_r\big(\abs{\nabla\psi_0}_{H^{r+1}}+\mu^{r/2}\Abs{\Lambda^r
  (\nabla_{Y,z}\phi_1^{(0)}\,_\trace)}_{H^1}\big)\\ 
&\leq &
   \mu^{-r/2}M_r\big(\abs{\nabla\psi_0}_{H^{r+1}}+\mu^{r/2}
   \abs{\nabla_{Y}\psi_1}_{H^{r+1,r_0}} + \abs{\nabla_{Y}\psi_1}_{H^{1,r_0+r}}
   \big). 
\end{eqnarray*}
\end{proof}
Using (\ref{estchi}) and the lemma, we get directly
\begin{eqnarray*}
 \mu^{r/2}\Vert\Lambda^r\nam\chi_1^{(1)}\Vert_{L^2}& \leq&  M_r
   \big(\abs{\nabla\psi_0}_{H^{r+1}}+\mu^{r/2} 
   \abs{\nabla_{X,Y}\psi_1}_{H^{r+1,r_0}} +
   \abs{\nabla_{X,Y}\psi_1}_{H^{1,r_0+r}}\big) \\
  & &
   + M_r  \sum_{k=1}^{r}\mu^{(r-k)/2}\Vert \Lambda^{r-k}\nam\chi_1^{(1)}\Vert_{L^2},
\end{eqnarray*}
and the estimate on $\Abs{\Lambda^r \nam \chi_1^{(1)}}$ follows from a
straightforward induction. We now turn 
to prove the estimate on  $\Abs{\Lambda^{r-1} \dz \nam
  \chi_1^{(1)}}$. As for the control of $\Lambda^{r-1}\dz\nam \phi_0$ 
in Proposition \ref{lemm3}, it is enough to get an upper bound on
$\Abs{\Lambda^{r-1} \dz^2 \chi_1^{(1)}}$. The idea is 
to proceed as in Proposition \ref{lemm3}, using the equation to get $\dz^2
\chi_1^{(1)}$ in terms of quantities under control, 
\[ \partial^2_z\chi_1^{(1)}
     = \frac{h}{1+\mu\vert \nabla\sigma\vert^2}\Big[\mu
     \bigl(-\frac{\dz\vert\nabla\sigma
       \vert^2}{h}\dz\chi_1^{(1)}+\partial_z (\nabla \sigma\cdot
     \nabla \chi_1^{(1)})  
        + \nabla\cdot(\nabla\sigma \partial_z\chi_1^{(1)}) 
        - \nabla\cdot( h\nabla \chi_1^{(1)}) \bigr)    
        + RHS\eqref{eq3.17new}\Big] ~;
\]
the presence of the fast scale $X/\sqrt{\mu}$ makes things a little
more complicated than in the proof of  
Proposition \ref{lemm3}, and we need product estimates and a refinement of
Lemma \ref{lemmfrac} for multiscale functions. 
\begin{lemm}\label{lemfrac2}
Let $r\in {\mathbb N}$, and denote $(r-1)_+=\max\{r-1,0\}$. Let also
$G=G(X,Y,z)\in L^\infty_zW^{(r-1)_+,\infty}_{X,Y}$, and  $F\in
L^2(\Omega_0)$ be such that $\Lambda^{(r-1)_+} F\in
L^2(\Omega_0)$. Then 
\begin{eqnarray*}
 \Abs{\Lambda^{r-1} (G_\trace F)}_{L^2}&\lesssim& \Vert G\Vert_{L^\infty_z W^{(r-1)_+,\infty}_{X,Y}}
\sum_{k=0}^{{(r-1)_+}}\mu^{-k/2}\Vert \Lambda^{(r-1)_+-k} F\Vert_{L^2}.
\end{eqnarray*}
If moreover $G\geq 0$, then
\begin{eqnarray*}
\Abs{\Lambda^{r-1}\frac{F}{1+G_\trace}}_{L^2}&\leq&  C(\Vert
G\Vert_{L^\infty_z
  W^{(r-1)_+,\infty}_{X,Y}})\sum_{k=0}^{(r-1)_+}\mu^{-k/2}\Vert
\Lambda^{(r-1)_+-k} F\Vert_{L^2}. 
\end{eqnarray*}
\end{lemm}
\begin{proof}
The product estimates are a straightforward consequence of the chain rule;
the second pair of estimates is derived as in Lemma \ref{lemmfrac}
using these product estimates. 
\end{proof}
Using Lemma \ref{lemfrac2} and the above expression for
$\dz^2\chi_1^{(1)}$, we get, with $\widetilde{r}=(r-1)_++1$, 
\begin{eqnarray}
\label{bourg1}
\Abs{\Lambda^{r-1}\dz^2\chi_1^{(1)}}_{L^2}&\leq& M_r
\sum_{k=0}^{(r-1)_+}\mu^{-k/2}\big(\sqrt{\mu}\Abs{\Lambda^{\widetilde{r}-k}
  \nam
  \chi_1^{(1)}}_2+\Abs{\Lambda^{(r-1)_+-k}RHS\eqref{eq3.17new}}_2\big). 
\end{eqnarray}
We can now use Lemma \ref{lemmcontrolA} to get
\begin{eqnarray}
\nonumber
\lefteqn{\Abs{\Lambda^{(r-1)_+-k}RHS\eqref{eq3.17new}}_{L^2} 
   \leq
   \sqrt{\mu}\Abs{\Lambda^{\widetilde{r}-k}A}_{L^2}+\Abs{\Lambda^{(r-1)_+-k}g}_{L^2}
   + \Abs{\Lambda^{(r-1)_+-k}\dz 
    A}_{L^2}}\\ 
\label{bourg2}
&\leq&
\mu^{-\frac{\widetilde{r}-k-1}{2}}M_r\big(\abs{\nabla\psi_0}_{H^{r+1}}
  +
  \mu^\frac{\widetilde{r}}{2}\abs{\nabla_{X,Y}\psi_1}_{H^{\widetilde{r},r_0}}
  + \abs{\nabla_{X,Y}\psi_1}_{H^{0,\widetilde{r}+r_0}}\big)+\Abs{\Lambda^{(r-1)_+-k}\dz 
  A}_{L^2}, 
\end{eqnarray}
so that the only thing we still need to prove is a control 
on  $\Abs{\Lambda^{(r-1)_+-k}\dz A}_{L^2}$.
\begin{lemm}
For all $m\in {\mathbb N}$,  $0\leq m\leq r-1$, one has
$$
\Abs{\Lambda^{m}\dz A}_{L^2}\leq
\mu^{-\frac{m}{2}}M_r\big(\abs{\nabla\psi_0}_{H^{r+1}}+\mu^{m/2}
\abs{\nabla_{X,Y}\psi_1}_{H^{m+1,r_0}}+\abs{\nabla_{X,Y}\psi_1}_{H^{0,r_0+m+1}}
\big). 
$$ 
\end{lemm}
\begin{proof}
\noindent From the explicit expression of $A$ provided by
(\ref{Eqn:DefinitionOfA}), and proceeding as in the proof of Lemma
\ref{lemmcontrolA}, we get 
$$
\Abs{\Lambda^{m}\dz A}_{L^2}\leq M_r\big(\mu^{-\frac{m}{2}}\abs{\nabla\psi_0}_{H^{r+1}}+\Abs{\Lambda^{m}(\nabla_{X,Y,z}\phi_1^{(0)}\,_\trace)}_{H^1_zL^2}
+\sum_{k=0}^{m}\mu^{-k/2}\Abs{\Lambda^{m-k}\dz \nam\phi_1^{(0)}}_{H^1_zL^2}\big),
$$
and the result follows from Proposition \ref{lem2.4}.
\end{proof}
The desired control on $\Abs{\Lambda^{r-1}\dz^2\chi_1^{(1)}}_{L^2}$ is
then a direct consequence of (\ref{bourg1}), (\ref{bourg2}) 
and the lemma.
\end{proof}

\subsection{Asymptotic expansion of the Dirichlet-Neumann 
operator with estimates}

We study here the asymptotic behavior of the Dirichlet-Neumann operator,
\begin{equation}\label{rappelG}
   G[ \zeta, \beta b_\gamma] \psi 
   =  {\bf e_z} \cdot P[\sigma] \nam \phi\, _{\vert_{z=0}} ~,
\end{equation}
where $\phi$ solves \eqref{eq14}. In the previous section, we have 
shown that when the surface parametrization $\zeta$ and the trace 
of the potential at the surface $\psi$ are of the form \eqref{ansatz}, 
one can decompose $\phi$ into
\[
   \phi = \phi_0 + \mu\chi,
\]
where $\phi_0$ is deduced from \eqref{eq14} by neglecting all the 
contributions due to the roughness. We have further decomposed 
$\phi_0$ and the residual $\chi$ (which contains all the roughness 
effects) as
\[
   \phi_0 = \psi_0+\mu\phi_0^{(1)}+\mu^2\chi_0^{(2)}
       \quad\mbox{ and }\quad
   \chi = \phi_1^{(0)}+\sqrt{\mu}\chi_1^{(1)},
\]
with controls on the residuals $\chi_0^{(2)}$ and $\chi_1^{(1)}$ given 
in Propositions \ref{lemm3} and \ref{lemmchi} respectively. We can therefore 
rewrite $\phi$ as the sum of an {\it effective} part and a
 {\it residual} part,
\[
   \phi = \phi_{\hbox{\small \itshape eff}} + \mu^{3/2}\phi_{res}
\]
with
\[
   \phi_{\hbox{\small \itshape eff}} = \psi_0+\mu(\phi_0^{(1)}+\phi_1^{(0)})
     \quad\mbox{ and }\quad
   \phi_{res} = \chi_1^{(1)}+\sqrt{\mu}\chi_0^{(2)} ~.
\]
Similarly, we decompose the Dirichlet-Neumann operator into an
effective and residual part as follows:

\begin{prop}\label{Lemma3.7}
We separate the  expression for the Dirichlet-Neumann operator as   
\[
    G[ \zeta,\beta b_\gamma] \psi = (G\psi)_{\hbox{\small \itshape eff}} + 
    \mu^{3/2} (G\psi)_{res}
\]
where
\begin{eqnarray*} 
   \frac{1}{\mu} (G\psi)_{\hbox{\small \itshape eff}} 
   & = & -\nabla\cdot (h_0\nabla\psi_0)  
     - \nabla_Y \zeta_1\,_{\trace} \cdot \nabla \psi_0\cr
   & & + |D_Y|\tanh(h_0 |D_Y|) \psi_1(X,Y) _\trace 
       - \nabla\psi_0 \cdot \nabla_Y (\sech(h_0|D_Y) b)_\trace ~.
\end{eqnarray*}
{The remainder $(G\psi)_{res}$ satisfies the estimate  ($r$  integer), 
\begin{equation}\label{Eqn:G-RemainderEstimate}
   |{(G\psi)_{res}}|_{H^{r}}  \leq M_r \mu^{-r/2 -1/8} \Big(\abs{\nabla\psi_0}_{H^{r+3}}
  + \mu^{r/2} \abs{\nabla_{X,Y}\psi_1}_{H^{r+1,r_0}} + \abs{\nabla_{X,Y}\psi_1}_{H^{1,r_0+r}}
\Big)
\end{equation}}

\end{prop}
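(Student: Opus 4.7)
\medskip

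\noindent\textbf{Proof proposal.}
My plan is to substitute the decomposition $\phi=\phi_{\hbox{\small \itshape eff}}+\mu^{3/2}\phi_{res}$ into the definition \eqref{rappelG} of $G[\zeta,\beta b_\gamma]\psi$ and organize the resulting expression by powers of $\sqrt{\mu}$. Since $\mathbf{e}_z\cdot P[\sigma]\nabla^\mu f|_{z=0}=-\mu\nabla\sigma\cdot\nabla f|_{z=0}+\frac{1+\mu|\nabla\sigma|^2}{h}\partial_z f|_{z=0}$ and $\sigma|_{z=0}=\zeta=\zeta_0+\gamma\zeta_1\trace$, every contribution can be expanded using the decomposition $P[\sigma]=P_0^{(0)}+\sqrt{\mu}(P_0^{(1)}+P_1^{(0)})+\mu P_1^{(1)}$ introduced in Section~3.3, together with the identities $\epsilon=1$, $\beta=\gamma=\sqrt{\mu}$.

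To extract the effective part, I will treat each piece of $\phi_{\hbox{\small \itshape eff}}=\psi_0+\mu(\phi_0^{(1)}+\phi_1^{(0)})$ separately. First, since $\psi_0$ is independent of $z$, only the $-\sqrt{\mu}\nabla\sigma$ row of the $\sqrt{\mu}$-order parts of $P[\sigma]$ acts nontrivially on $\nabla^\mu\psi_0=(\sqrt{\mu}\nabla\psi_0,0)^T$, and evaluating $\nabla\sigma|_{z=0}=\nabla\zeta_0+\nabla_Y\zeta_1\trace+O(\gamma)$ produces the terms $-\mu\nabla\zeta_0\cdot\nabla\psi_0-\mu\nabla_Y\zeta_1\trace\cdot\nabla\psi_0$. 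Next, the explicit formula $\phi_0^{(1)}=-h_0^2(\tfrac{z^2}{2}+z)\Delta\psi_0$ yields $\partial_z\phi_0^{(1)}|_{z=0}=-h_0^2\Delta\psi_0$, so the $(d+1,d+1)$ entry of $P_0^{(0)}$ (which equals $1/h_0$) contributes $-\mu h_0\Delta\psi_0$; combined with the $\nabla\zeta_0$ term above this produces the divergence $-\mu\nabla\cdot(h_0\nabla\psi_0)$. Finally, using the explicit representation \eqref{Eqn:ExpressionForPhi_1} of $\phi_1^{(0)}$ and computing $\partial_z$ at $z=0$, the $\cosh$ piece gives $h_0|D_Y|\tanh(h_0|D_Y|)\psi_1$ and the $\sinh$ piece gives a term involving $\sech(h_0|D_Y|)\nabla_Y b\cdot\nabla\psi_0$; division by $h_0$ from $P_0^{(0)}$ produces the stated $|D_Y|\tanh(h_0|D_Y|)\psi_1\trace$ and $\sech$-contributions in the effective part.

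For the residual part, I collect all the remaining contributions: first, the term $\mu^{3/2}\mathbf{e}_z\cdot P[\sigma]\nabla^\mu\phi_{res}|_{z=0}$ with $\phi_{res}=\chi_1^{(1)}+\sqrt{\mu}\chi_0^{(2)}$; second, the higher-order-in-$\sqrt{\mu}$ contributions of $P[\sigma]$ applied to $\phi_{\hbox{\small \itshape eff}}$ that were not absorbed into $(G\psi)_{\hbox{\small \itshape eff}}$ (in particular the action of $\mu P_1^{(1)}$, and the $\sqrt{\mu}P_0^{(1)},\sqrt{\mu}P_1^{(0)}$ rows acting on $\mu\nabla^\mu(\phi_0^{(1)}+\phi_1^{(0)})$). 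Each such term is controlled in $H^r(\mathbb R^d)$ by reducing a boundary norm at $z=0$ to $L^2$-norms on $\Omega_0$ of $\nabla^\mu$ of the relevant function and of its generalized Riesz transform $\Lambda^{-1}\partial_z\nabla^\mu$, using the same trace argument discussed before Proposition~\ref{lemm3}. Proposition~\ref{lemm3}(iii) provides the needed bounds on $\chi_0^{(2)}=(\phi_0-\psi_0-\mu\phi_0^{(1)})/\mu^2$, while Proposition~\ref{lemmchi} provides them on $\chi_1^{(1)}$, and Lemma~\ref{lemmeP1} handles the commutators and products with the rapidly oscillating coefficient matrices $P_1^{(0)},P_1^{(1)}$.

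The main technical obstacle is the bookkeeping of powers of $\mu$ on the boundary $\{z=0\}$. The volume estimates for $\chi_1^{(1)}$ are weighted by the expected $\mu^{-r/2}$ but passing to the trace at the surface forces an interpolation between the $L^2$ bound on $\Lambda^r\nabla^\mu\chi_1^{(1)}$ and the $L^2$ bound on $\Lambda^{r-1}\partial_z\nabla^\mu\chi_1^{(1)}$ proved in Proposition~\ref{lemmchi}; this interpolation, combined with the loss caused by the Sobolev embedding in the fast variables (reflected by $r_0>d/2$), is what generates the additional factor $\mu^{-1/8}$ in \eqref{Eqn:G-RemainderEstimate}. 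Everything else is a careful accounting of the Leibniz rule for multiscale functions and a repeated use of the product estimates from Lemma~\ref{lemfrac2}.
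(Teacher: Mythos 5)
Your proposal follows essentially the same route as the paper: decompose $P[\sigma]=P_0^{(0)}+\sqrt{\mu}(P_0^{(1)}+P_1^{(0)})+\mu P_1^{(1)}$, split $\phi=\phi_{\hbox{\small \itshape eff}}+\mu^{3/2}\phi_{res}$, compute the effective part from the explicit formulas for $\phi_0^{(1)}$ and $\phi_1^{(0)}$, and estimate the residual by tracing to $\{z=0\}$ and invoking Propositions~\ref{lemm3} and~\ref{lemmchi} and Lemma~\ref{lemmeP1}. The computation of $(G\psi)_{\hbox{\small \itshape eff}}$ (the $-\nabla\zeta_0\cdot\nabla\psi_0$ from the $\sqrt{\mu}$-order off-diagonal row on $\nam\psi_0$ combining with $-h_0\Delta\psi_0$ from $\frac{1}{h_0}\partial_z\phi_0^{(1)}|_{z=0}$ to give $-\nabla\cdot(h_0\nabla\psi_0)$, and the $\tanh$/$\sech$ terms from $\partial_z\phi_1^{(0)}|_{z=0}$) is exactly as in the paper, as is the enumeration of the residual terms.

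One imprecision worth correcting: you attribute the $\mu^{-1/8}$ loss to the interpolation in the trace estimate \emph{combined with} the Sobolev embedding $H^{r_0}_Y\hookrightarrow L^\infty_Y$. In fact the index $r_0>d/2$ only affects which $H^{r_1,r_2}$ norms of $\psi_1$ appear on the right-hand side and contributes nothing to the power of $\mu$. The $\mu^{-1/8}$ comes entirely from the chain
\[
\abs{F\,_{\vert_{z=0}}}_{L^2}\lesssim\mu^{1/8}\abs{F\,_{\vert_{z=0}}}_{H^{1/2}}+\mu^{-1/8}\abs{F\,_{\vert_{z=0}}}_{H^{-1/2}},
\]
applied with $F=\nam\phi_{res}$, followed by the $\mu$-weighted trace estimate $\abs{F\,_{\vert_{z=0}}}_{L^2}\lesssim\mu^{1/4}\Abs{\Lambda^{1/2}F}_{L^2}+\mu^{-1/4}\Abs{\Lambda^{-1/2}\dz F}_{L^2}$ for the $H^{1/2}$ piece and the unweighted one for the $H^{-1/2}$ piece. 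The exponent $1/8$ is precisely the balance point between the $\mu^{-1/2}$ cost of one full $\Lambda$-derivative on $\chi_1^{(1)}$ (Proposition~\ref{lemmchi} with $r=1$) and the $\mu^{0}$ cost at $r=0$. Aside from this misattribution, the argument is the paper's argument.
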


\begin{rema}
The nonlocal operators  in the expression for $(G\psi)_{eff}$ arise from the simultaneous   homogenization process and shallow water limit.
Homogenization analysis on a shallow water expansion would give a different result. The reason for
 this difference is that certain terms neglected in standard shallow water expansions are not negligible in
 the presence of rapidly varying bathymery;  their effects are described in  the nonlocal terms of 
$(G\psi)_{eff}$.
\end{rema}

\begin{proof}
Recalling that $P[\sigma]=P_0^{(0)}+\sqrt{\mu}(P_0^{(1)}+P_1)$, 
with $P_1=P_1^{(0)}+\sqrt{\mu}P_1^{(1)}$, one has

\begin{eqnarray*}
   G[\zeta,\beta b_\gamma]\psi 
   & = & {\bf e_z}\cdot P_0^{(0)}\nam\phi_{\hbox{\small \itshape eff}}\,_{\vert_{z=0}}
    + \sqrt{\mu}{\bf e_z}\cdot
        (P_0^{(1)}+P_1)\nam \phi_{\hbox{\small \itshape eff}}\,_{\vert_{z=0}}
    \\ \nonumber
       & & + \mu^{3/2} {\bf e_z}\cdot P[\sigma]\nam\phi_{res}\,_{\vert_{z=0}} 
\\ \nonumber
&=& {\bf e_z}\cdot P_0^{(0)}\nam\phi_{\hbox{\small \itshape eff}}\,_{\vert_{z=0}}
    + \sqrt{\mu}{\bf e_z}\cdot
  (P_0^{(1)}+P_1^{(0)})\nam\psi_0 \,_{\vert_{z=0}} +\mu 
  {\bf e_z}\cdot P_1^{(1)} \nam \psi_0 \,_{\vert_{z=0}} \nonumber \\
  && + \mu^{3/2} {\bf e_z}\cdot 
    (P_0^{(1)}+P_1)\nam (\phi_0^{(1)}+\phi_1^{(0)})\,_{\vert_{z=0}}
   + \mu^{3/2} {\bf e_z}\cdot P[\sigma]\nam\phi_{res}\,_{\vert_{z=0}}.
\end{eqnarray*}
We now decompose
\begin{eqnarray*}
   G[\eps\zeta,\beta b_\gamma]\psi 
   & = &  (G\psi)_{\hbox{\small \itshape eff}} + \mu^{3/2}(G\psi)_{res} ~,
\end{eqnarray*}
with
\begin{eqnarray}
\nonumber
   (G\psi)_{\hbox{\small \itshape eff}} 
    & = & {\bf e_z}\cdot P_0^{(0)}\nam\phi_{\hbox{\small \itshape eff}}\,_{\vert_{z=0}}
      + \sqrt{\mu}{\bf e_z}\cdot
        (P_0^{(1)}+P_1^{(0)})\nam\psi_0\,_{\vert_{z=0}} ~,  \label{defGpsi} \\ 
   (G\psi)_{res} & = & \frac{1}{\sqrt{\mu}}{\bf e_z}
     \cdot P_1^{(1)}\nam\psi_0\,_{\vert_{z=0}}
     + {\bf e_z} \cdot (P_0^{(1)}+P_1)\nam(\phi_0^{(1)}+\phi_1^{(0)})\,_{\vert_{z=0}}
\label{defGr}   \\
   & & + {\bf e_z}\cdot P[\sigma]\nam\phi_{res}\,_{\vert_{z=0}} ~. \nonumber 
\end{eqnarray}
The two tasks of this proposition are to give an expression for 
$(G\psi)_{\hbox{\small \itshape eff}}$ and to prove an estimate for 
$(G\psi)_{r}$. 

\smallskip\noindent
- \emph{Explicit computation of $(G\psi)_{\hbox{\small \itshape eff}}$}. From the 
definition of $\phi_{\hbox{\small \itshape eff}}$, we have
\begin{eqnarray}\label{expansionG} 
   \frac{1}{\mu}(G\psi)_{\hbox{\small \itshape eff}} & = &  
   \frac{1}{\mu}{\bf e_z} \cdot (P_0+\sqrt{\mu}P_1^{(0)})\nam\psi_0 \, _{\vert_{z=0}}
   + {\bf e_z}\cdot P_0^{(0)} \nam \phi_0^{(1)}\,_{\vert_{z=0}}
   + {\bf e_z}\cdot P_0^{(0)}
    \nam \phi_1^{(0)}\,_{\vert_{z=0}}    \nonumber\\
   & = & -\nabla\cdot (h_0\nabla\psi_0) - \nabla_Y \zeta_1 \trace \cdot \nabla \psi_0
   +\frac{1}{h_0} \partial_z \phi_1^{(0)} \vert_{z=0} ~.
\end{eqnarray}
Computing the last term in the RHS with the help of Proposition \ref{lem2.4}, we get
\[
   \frac{1}{h_0} \partial_z \phi_1^{(0)} \vert_{z=0} 
   = |D_Y|\tanh(h_0 |D_Y|) \psi_1(X,Y) _\trace +
     \nabla\psi_0 \cdot \nabla_Y (\sech(h_0|D_Y|) b)_\trace ,
\]
so that $G_{\hbox{\small \itshape eff}}$ is indeed given by the expression
stated in the proposition.

\smallskip\noindent
- \emph{Control of the residual $(G\psi)_{res}$.}  From the explicit
expression (\ref{defGr}) of $(G\psi)_{res}$ and 
Proposition \ref{lemm3} and \ref{lem2.4} and  Lemma \ref{lemmeP1}, we get
\begin{eqnarray*}
\abs{(G\psi)_{res}}_{H^r}&\leq& \mu^{-r/2} M_r\big(\abs{\nabla\psi_0}_{H^{r+1}}+\mu^{r/2}
{\abs{\nabla_{X,Y}\psi_1}_{H^{r+1,r_0}}+\abs{\nabla_{X,Y}\psi_1}_{H^{1,r_0+r}}\big)}\\
& &+ M_r \sum_{k=0}^r \mu^{-k/2}\abs{ \Lambda^{r-k}\nam
  \phi_{res}\,_{\vert_{z=0}}}_{L^2} ~,
\end{eqnarray*}
which motivates the following lemma.
\begin{lemm}
\[
   \abs{\nam\phi_{res}\,_{\vert_{z=0}}}_{L^2}\leq
   \mu^{-1/8}M_1\big(\abs{\nabla\psi_0}_{H^2} 
  + \mu^{1/2}{\abs{\nabla_{X,Y}\psi_1}_{H^{2,r_0}} 
  + \abs{\nabla_{X,Y}\psi_1}_{H^{1,1+r_0}}\big)} 
\]
\end{lemm}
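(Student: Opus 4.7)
The plan is to apply a Fourier-based anisotropic trace inequality to $u=\nam\phi_{res}$ on the flat strip $\Omega_0$, where $\phi_{res}=\chi_1^{(1)}+\sqrt{\mu}\chi_0^{(2)}$ as in the decomposition given before the proposition. The trace inequality I would use is
\[
\abs{u\vert_{z=0}}_{L^2_X}^2\leq C\bigl(\Abs{u}_{L^2}^2+\Abs{\Lambda^\alpha u}_{L^2}^2+\Abs{\Lambda^{-\alpha}\dz u}_{L^2}^2\bigr),
\]
valid for any $\alpha\in(0,1)$. It is derived by expressing $|\hat u(\xi,0)|^2$ as the average over $z'\in(-1,0)$ of $|\hat u(\xi,z')|^2+2\int_{z'}^0{\rm Re}(\hat u\,\overline{\dz\hat u})\,dz$, then applying the weighted Young inequality $2ab\leq(1+|\xi|^2)^\alpha a^2+(1+|\xi|^2)^{-\alpha}b^2$ before integrating in $\xi$. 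I would apply this with the choice $\alpha=1/4$.

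The three right-hand side terms are then controlled using Propositions \ref{lemm3}(iii) and \ref{lemmchi}. For the dominant contribution $\nam\chi_1^{(1)}$, Proposition \ref{lemmchi} at $r=0$ gives $\Abs{\nam\chi_1^{(1)}}_{L^2}=O(1)$, while at $r=1$ it gives $\Abs{\Lambda\nam\chi_1^{(1)}}_{L^2}=O(\mu^{-1/2})$ — the loss reflecting the fact that each horizontal derivative costs $\mu^{-1/2}$ when applied to a function oscillating at the fast scale $X/\sqrt\mu$. The standard interpolation
\[
\Abs{\Lambda^{1/4}\nam\chi_1^{(1)}}_{L^2}\leq\Abs{\nam\chi_1^{(1)}}_{L^2}^{3/4}\Abs{\Lambda\nam\chi_1^{(1)}}_{L^2}^{1/4}
\]
raises this loss to the fourth power, producing exactly the $(\mu^{-1/2})^{1/4}=\mu^{-1/8}$ factor appearing in the statement. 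The term $\Abs{\Lambda^{-1/4}\dz\nam\chi_1^{(1)}}_{L^2}$ is dominated by $\Abs{\dz\nam\chi_1^{(1)}}_{L^2}$ since $\Lambda^{-1/4}$ is bounded on $L^2$, and the latter is the $r=1$ estimate in Proposition \ref{lemmchi} (no $\mu$-loss). The subdominant piece $\sqrt{\mu}\nam\chi_0^{(2)}$ is handled by Proposition \ref{lemm3}(iii), which bounds $\nam\chi_0^{(2)}$ and $\dz\nam\chi_0^{(2)}$ in $L^2$ at order $\mu^0$; the prefactor $\sqrt\mu$ then makes this contribution absorbable in the main term.

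The main obstacle I anticipate is purely the bookkeeping of data norms. The geometric mean $F_0^{3/4}F_1^{1/4}$ produced by the interpolation must be bounded via Young's inequality by $F_0+F_1$ and then matched against the right-hand side claimed in the lemma. In particular, the factor $\mu^{1/2}$ attached to $\abs{\nabla_{X,Y}\psi_1}_{H^{2,r_0}}$ is inherited directly from the corresponding weight already present in the $r=1$ bound of Proposition \ref{lemmchi}, while the lower-regularity $r=0$ norms are absorbed into the $r=1$ ones through the Sobolev embeddings $H^2\supset H^1$ and $H^{1,r_0+1}\supset H^{1,r_0}$.
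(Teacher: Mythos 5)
Your argument is correct and reaches the same $\mu^{-1/8}$ count, but by a genuinely different route from the paper. The paper's proof first interpolates the $L^2$ trace norm between $H^{1/2}$ and $H^{-1/2}$ with $\mu$-weights, $\abs{u\vert_{z=0}}_{L^2}\lesssim\mu^{1/8}\abs{u\vert_{z=0}}_{H^{1/2}}+\mu^{-1/8}\abs{u\vert_{z=0}}_{H^{-1/2}}$, and then applies two distinct $\mu$-weighted trace inequalities: $|F_{\vert z=0}|_{L^2}\lesssim\mu^{1/4}\Abs{\Lambda^{1/2}F}_{L^2}+\mu^{-1/4}\Abs{\Lambda^{-1/2}\dz F}_{L^2}$ for the first piece and the unweighted version for the second, producing the four interior norms $\Abs{\Lambda\nam\phi_{res}}$, $\Abs{\dz\nam\phi_{res}}$, $\Abs{\nam\phi_{res}}$, $\Abs{\Lambda^{-1}\dz\nam\phi_{res}}$ with the appropriate $\mu$-prefactors; the $\mu^{-1/8}$ then comes entirely from this weighted splitting, applied before any appeal to Proposition~\ref{lemmchi}. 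Your version instead starts from a single anisotropic Fourier trace lemma (with exponent $\alpha=1/4$), which is not $\mu$-weighted, and the $\mu^{-1/8}$ emerges afterward via the interpolation $\Abs{\Lambda^{1/4}u}_{L^2}\leq\Abs{u}_{L^2}^{3/4}\Abs{\Lambda u}_{L^2}^{1/4}$ combined with the $\mu$-dependence of the $r=0$ and $r=1$ bounds in Proposition~\ref{lemmchi}. Both routes consume identical input estimates and lead to the same bound, so the difference is essentially whether the small parameter is threaded through the trace machinery itself or extracted from a fractional interpolation of the elliptic estimates. Your approach is arguably more self-contained, since the anisotropic trace lemma and the $L^2$--$H^1$ log-convexity are elementary, whereas the paper relies on prior forms of the weighted trace lemma; the paper's version has the merit of being more mechanical and requiring no fractional interpolation step. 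One small remark on bookkeeping: in your final absorption you do not actually need Young's inequality $F_0^{3/4}F_1^{1/4}\leq F_0+F_1$; since $F_0\lesssim F_1$ by the embeddings you cite, $F_0^{3/4}F_1^{1/4}\lesssim F_1$ directly, which is cleaner.
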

\begin{proof}
 We write 
\begin{eqnarray*}
\abs{\nam\phi_{res}\,_{\vert_{z=0}}}_{L^2}&\lesssim& \mu^{1/8}
\abs{\nam\phi_{res}\,_{\vert_{z=0}}}_{H^{1/2}} 
  + \mu^{-1/8}\abs{\nam\phi_{res}\,_{\vert_{z=0}}}_{H^{-1/2}} 
\\
&\lesssim& \mu^{1/8}\big(\mu^{1/4}\Abs{\Lambda
  \nam\phi_{res}}_{L^2}+\mu^{-1/4}\Abs{\dz
  \nam\phi_{res}}_{L^2}\big)+\mu^{-1/8}\big(\Abs{\nam\phi_{res}}_{L^2}
  + \Abs{\Lambda^{-1}\dz\nam\phi_{res}}_{L^2}\big)  ~,
\end{eqnarray*}
where, for the second inequality, we have used two different version
of the trace lemma, namely, $|{F_{\vert_{z=0}}}|_{L^2}\lesssim
\mu^{1/4}\Abs{\Lambda^{1/2}F}_{L^2}+\mu^{-1/4}\Abs{\Lambda^{-1/2} \dz
  F}_{L^2}$ and  
 $|{F_{\vert_{z=0}}}|_{L^2}\lesssim
 \Abs{\Lambda^{1/2}F}_{L^2}+\Abs{\Lambda^{-1/2} \dz F}_{L^2}$. The
 estimate follows therefore 
 from the definition of $\phi_{res}$ and Proposition \ref{lemmchi}.
\end{proof}
{Bound for $ \abs{ \Lambda^{r-k}\nam
    \phi_{res}\,_{\vert_{z=0}}}_{L^2} $ are obtained in the same
  manner, giving rise to a power of $\mu$ in the form
  $\mu^{-1/8-(r-k)/2}$.}  
\end{proof}

Later in the consistency analysis, we will need an estimate of
$\abs{(G\psi)_{res}}_{H^{1/2}}$. For this purpose,  
interpolating between $H^{r-1}$ and $H^{r}$, we have ($r\ge 1$):
\begin{eqnarray}
   \abs{(G\psi)_{res}}_{H^{r-\frac{1}{2}}} &\leq& \mu^{1/4}  \abs{(G\psi)_{res}}_{H^{r}}  +
\mu^{-1/4}  \abs{(G\psi)_{res}}_{H^{r-1}} \cr
&\leq&  M_r \mu^{-(r-\frac{1}{2})/2 -1/8} \Big(\abs{\nabla\psi_0}_{H^{r+3}}
+\mu^{r/2} 
\abs{\nabla_{X,Y}\psi_1}_{H^{r+1,r_0}}+\abs{\nabla_{X,Y}\psi_1}_{H^{1,r_0+r}}
\Big)
\end{eqnarray}

\section{Homogenization with estimates of the 
   equations for water waves}\label{secthom}

Up to this point the canonical variables of the water wave problem
$(\zeta,\psi)$ have been treated as data for an elliptic partial 
differential equation in a fixed domain. We now return to study the
dynamics of the water waves system \eqref{eq7} as a time dependent 
problem. We first derive the effective PDEs satisfied by
$(\zeta_0,\psi_0)$ and $(\zeta_1,\psi_1)$, after which we investigate
the precision of our approximate solution \eqref{ansatz} in satisfying
the full water wave equation.  

\subsection{Effective equations}\label{sect-effective-eqs}
Consider the decomposition \eqref{Eqn:BasicAnsatz1}\eqref{Eqn:BasicAnsatz2} 
to be an Ansatz for the full Euler equations \eqref{eq7}, for 
which the slow and fast scale variables are identified through $Y =
X/\sqrt{\mu}$ and $\tau = t/\sqrt{\mu}$. Substituting these
expressions into the full equations and using the rigorous expansion
\eqref{expansionG}, for the first component of  equations of
\eqref{eq7} we obtain an equation for the quantity $(\zeta_a,\psi_a)
= (\zeta_0 + \sqrt{\mu} \zeta_1, \psi_0 +\mu \psi_1)$; 
\begin{eqnarray}\label{eq4.1}
   && \partial_t\zeta_0 + \partial_\tau \zeta_1 + h_0 \Delta \psi_0
    + \nabla \zeta_0 \cdot \nabla \psi_0 + \nabla_Y \zeta_1 \cdot
    \nabla \psi_0 \nonumber \\
   && \qquad - |D_Y|\tanh(h_0 |D_Y|) \psi_1(X,Y)_\trace 
    -\nabla\psi_0 \cdot \nabla_Y (\sech(h_0|D_Y|))b_\trace 
    = -\sqrt{\mu}\partial_t \zeta_1 + \sqrt{\mu} (G \psi_a)_{res} ~.
\end{eqnarray}
At this point the fast and slow time scales are identified, $Y =
X/\sqrt{\mu}$ and $\tau = t/\sqrt{\mu}$, and we have made no
approximation. Similarly, for the second equation of \eqref{eq7}  
we obtain
\begin{eqnarray}\label{eq4.2}
   &&  \partial_t \psi_0 + \sqrt{\mu}\partial_\tau \psi_1 + \mu\partial_t\psi_1 
       + \zeta_0 + \sqrt{\mu}\zeta_1  
       + \frac{1}{2} |\nabla \psi_0 +\sqrt{\mu}
       (\nabla_Y+\sqrt{\mu} ~\nabla_X) \psi_1|^2 =    \nonumber \\
   && \qquad \mu\frac{ \Big(  \frac{1}{\mu} 
   \big(   (G \psi_a)_{\hbox{\small \itshape eff}} +  \mu^{3/2} (G \psi_a)_{res} \big)
      + (\nabla \zeta_0 + \nabla_Y\zeta_1+\sqrt{\mu}\nabla_X\zeta_1)\cdot
      (\nabla \psi_0 +\sqrt{\mu} \nabla_Y\psi_1 +\mu \nabla_X\psi_1)
      \Big)^2}{2 \big(1 + 
      \mu|\nabla\zeta_0 + (\nabla_Y + \sqrt{\mu} ~\nabla_X)\zeta_1|^2\big) } ~.
\end{eqnarray}
Isolating the error terms in \eqref{eq4.2} onto the RHS, one obtains
\begin{eqnarray}\label{eq4.3}
   &&  \partial_t\psi_0 + \sqrt{\mu}\partial_\tau \psi_1 
       + \zeta_0 + \sqrt{\mu}\zeta_1  
       + \frac{1}{2}  |\nabla \psi_0|^2  +\sqrt{\mu}
       \nabla\psi_0 \cdot \nabla_Y \psi_1 \nonumber \\
   && \qquad = -\mu\partial_t\psi_1 
      - \mu  \nabla\psi_0 \cdot \nabla_X \psi_1 - \frac{1}{2}\mu
        |\nabla_Y\psi_1 +\sqrt{\mu} ~\nabla_X \psi_1|^2 \\
   && \qquad + \mu\frac{ \Big( \frac{1}{\mu} 
      \big( ( G \psi_a)_{\hbox{\small \itshape eff}} + \mu^{3/2} (G \psi_a)_{res} \big)
      + (\nabla \zeta_0 + \nabla_Y\zeta_1 + \sqrt{\mu}\nabla_X\zeta_1)\cdot
      (\nabla \psi_0 +\sqrt{\mu} \nabla_Y\psi_1 + \mu \nabla_X\psi_1)
      \Big)^2}{2 \big(1 + 
      \mu|\nabla\zeta_0 + (\nabla_Y + \sqrt{\mu} ~\nabla_X)\zeta_1|^2\big)} ~.
       \nonumber 
\end{eqnarray}
The bottom profile  $b$ is function of the fast variables $Y = X/\sqrt{\mu}$. 

Now adopt the point of view that we seek multi-scale approximations to
the system of equations \eqref{eq4.1}\eqref{eq4.3}. To impose this
scaling regime, make the assumption that the variables $t,X$ and $\tau,Y$
are independent, so that $(\zeta_0 +\sqrt{\mu} \zeta_1, \psi_0 +
{\mu}\psi_1)$ are multiscale functions of the variables $(t,X,\tau,Y)$. 
In these equations, $\zeta_0$ and $\psi_0$ are functions of $X,t $
only, while $\zeta_1$ and $\psi_1$ are multi-scale functions of both
time and space. The original variables will be re-imposed when we return
to the identification $Y=X/\sqrt{\mu}$ and $\tau=t/\sqrt{\mu}$. In order
to justify this otherwise formal separation of slow and fast scales, we
will use results on scale separation that appear in \cite{BLP}\cite{CGNS}. 

Equations \eqref{eq4.1}\eqref{eq4.2} are two equations for the four 
unknown quantities $(\zeta_0, \zeta_1, \psi_0, \psi_1)$.  In order to 
obtain well defined evolution equations for them, one must identify
dynamics that take place on the slow and fast space and time scales. 
This is performed using the following scale separation lemmas, in
which the underlying periodic nature of the cell problem plays a r\^ole. 

\begin{prop}\label{sc-sep1} Let $g$ be a  continuous 
function on $\R^d$ which is
periodic over  $\T^d$,  and denote 
$\bar g=\frac{1}{(2\pi)^d}
\int_{\T^d} g(Y) dY $ its average value on $\T^d$. For any function $f(X)$  
in the Schwarz space ${\mathcal S}(\R^d)$, we have 
\begin{equation}
    \int g(X/\gamma) f(X)  dX 
    = \bar{g} \int f(X)  dX +
    O(\gamma^N) ~,
\label{scale-sepa-lemma1}
\end{equation}
for any $N$.
\end{prop}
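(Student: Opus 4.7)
The plan is to prove this via Fourier series on the torus. First I expand $g$ into its Fourier series,
\[
   g(Y) = \sum_{k \in \Z^d} \hat g_k\, e^{ik \cdot Y}, \qquad \hat g_0 = \bar g,
\]
noting that the continuity of $g$ gives the uniform bound $|\hat g_k| \leq \|g\|_{L^\infty(\T^d)}$. Substituting $Y = X/\gamma$ and formally pairing with $f \in \mathcal S(\R^d)$, I would obtain
\[
   \int_{\R^d} g(X/\gamma) f(X)\,dX
   = \sum_{k \in \Z^d} \hat g_k \int_{\R^d} e^{ik\cdot X/\gamma} f(X)\,dX
   = (2\pi)^{d/2} \sum_{k \in \Z^d} \hat g_k\, \widehat{f}(-k/\gamma),
\]
where $\widehat f$ denotes the Fourier transform of $f$. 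The $k=0$ term contributes exactly $\bar g \int_{\R^d} f(X)\,dX$, the advertised main term.

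For the remainder, I use the Schwartz decay of $\widehat f$: since $f \in \mathcal S$, for every $M \in \mathbb N$ there is a constant $C_M$ with $|\widehat f(\xi)| \leq C_M(1+|\xi|)^{-M}$, so that $|\widehat f(-k/\gamma)| \leq C_M \gamma^M |k|^{-M}$ for $k\neq 0$. Combined with $|\hat g_k| \leq \|g\|_\infty$, this yields
\[
   \Big|\int_{\R^d} g(X/\gamma) f(X)\,dX - \bar g \int_{\R^d} f(X)\,dX\Big|
   \leq C_M'\, \gamma^M \|g\|_\infty \sum_{|k|\geq 1} |k|^{-M},
\]
and the sum converges as soon as $M > d$. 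Taking $M = N + d + 1$ delivers the desired bound $O(\gamma^N)$ for any prescribed $N$.

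The main obstacle is justifying the formal interchange of summation and integration in the first display, since the Fourier series of a merely continuous $g$ need not converge pointwise. I would circumvent this in either of two ways. First, one may work in the tempered-distribution framework: $g(\cdot/\gamma)$ is a periodic tempered distribution whose distributional Fourier transform equals the Dirac comb $(2\pi)^{d/2} \sum_k \hat g_k\, \delta_{k/\gamma}$, and Parseval duality $\langle T, f\rangle = \langle \widehat T, \widehat f\rangle$ gives the identity directly, with the Schwartz decay of $\widehat f$ making the sum absolutely convergent. Alternatively, one may approximate $g$ uniformly on $\T^d$ by trigonometric polynomials via Fej\'er means, establish the estimate for each polynomial in the finite sum, and pass to the limit; the uniform bound $|\hat g_k| \leq \|g\|_\infty$ together with the Schwartz decay of $\widehat f$ provides an absolutely convergent majorant for dominated convergence on the Fourier side, and the estimate is preserved in the limit with constants depending only on $\|g\|_\infty$ and seminorms of $f$.
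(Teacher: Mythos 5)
The paper does not actually supply a proof of Proposition~\ref{sc-sep1}: it is presented as a standard scale-separation fact and used directly (the homogenization references in the bibliography play the role of citation for this kind of statement). Your proof is correct, complete, and is the natural argument one would write.

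Your argument is sound. Expanding $g$ in its Fourier series on $\T^d$, isolating the $k=0$ mode to obtain the main term $\bar g \int f$, and using the rapid decay $|\widehat f(\xi)|\leq C_M(1+|\xi|)^{-M}$ on the nonzero modes gives $|\widehat f(-k/\gamma)|\leq C_M\gamma^M|k|^{-M}$ for $k\neq 0$, so the remainder sum is absolutely convergent (for $M>d$) and of size $O(\gamma^M)$; choosing $M$ large enough yields $O(\gamma^N)$ for any $N$, with constants depending only on $\|g\|_{L^\infty(\T^d)}$ and Schwartz seminorms of $f$. You correctly flag the one genuine technical point, namely that the Fourier series of a merely continuous $g$ need not converge pointwise or uniformly, so the interchange of $\sum_k$ and $\int dX$ requires justification. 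Both proposed repairs are valid: the Poisson-comb/tempered-distribution route realizes $g(\cdot/\gamma)$ as a bounded tempered distribution whose Fourier transform is a Dirac comb with $\ell^\infty$ coefficients, and the duality pairing against $\widehat f$ is absolutely convergent; the Fej\'er-approximation route is also correct, since trigonometric polynomials converge uniformly to $g$, the estimate holds for each partial sum with a constant controlled by $\|\sigma_n g\|_\infty\leq\|g\|_\infty$, and the bound passes to the limit. One small remark: the statement implicitly assumes $\gamma$ is small (say $\gamma\leq 1$), which you use when bounding $(1+|k|/\gamma)^{-M}\leq(\gamma/|k|)^M$; this is consistent with the paper's regime $\gamma\ll 1$ and worth stating explicitly for a clean argument.
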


\begin{prop}\label{sc-sep2} Let $g(X,Y)$ be a continuous
 function on $\R^d \times 
\R^d$ which is periodic in $Y\in \T^d$,  and denote 
$\bar g(X ) =\frac{1}{(2\pi)^d} \int_{\T^d} g(X,Y) dY $ its average value over 
$\T^d$. For any function $f(X)$  
in the Schwarz space ${\mathcal S}(\R^d)$, we have 
\begin{equation}
    \int g(X,X/\gamma) f(X)  dX 
    = \int \bar{g}(X)  f(X)  dX  
      + O(\gamma^N) ~,
\label{scale-sepa-lemma}
\end{equation}
for any $N$.
\end{prop}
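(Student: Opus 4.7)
The plan is the standard nonstationary-phase mechanism of two-scale analysis: expand $g$ as a Fourier series in the periodic variable $Y$, peel off the zero mode (which produces $\bar g$), and then show that each nonzero mode gives a contribution of order $\BigOh{\gamma^N}$ through $N$ integrations by parts against the rapidly oscillating exponential $e^{ik\cdot X/\gamma}$. Proposition \ref{sc-sep1} is simply the $X$-independent special case and the same argument applies, slightly simpler.

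Concretely, write
\begin{equation*}
g(X,Y)=\sum_{k\in \Z^d}\widehat{g}_k(X)e^{ik\cdot Y},\qquad \widehat{g}_0(X)=\bar g(X),
\end{equation*}
where the series is understood in the sense made legitimate by whatever $Y$-smoothness $g$ carries in the context it is invoked (for the applications in the paper $g$ arises from smooth multiscale profiles, so the $\widehat{g}_k$ decay super-polynomially in $k$ uniformly in $X$). Substituting $Y=X/\gamma$ and isolating $k=0$ yields
\begin{equation*}
\int_{\R^d} g(X,X/\gamma)f(X)\,dX - \int_{\R^d}\bar g(X)f(X)\,dX \;=\; \sum_{k\ne 0}\int_{\R^d}\widehat{g}_k(X)f(X)\,e^{ik\cdot X/\gamma}\,dX.
\end{equation*}

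For each $k\ne 0$, I would use the identity
\begin{equation*}
e^{ik\cdot X/\gamma}=\frac{-i\gamma}{|k|^2}\,k\cdot\nabla_X\bigl(e^{ik\cdot X/\gamma}\bigr)
\end{equation*}
and integrate by parts $N$ times. Since $f\in{\mathcal S}(\R^d)$ and $\widehat{g}_k$ together with its $X$-derivatives is smooth and at worst polynomially growing in $|k|$, the product $\widehat{g}_kf$ is Schwartz in $X$ so all boundary terms vanish. This produces the pointwise-in-$k$ bound
\begin{equation*}
\bigg|\int_{\R^d}\widehat{g}_k(X)f(X)\,e^{ik\cdot X/\gamma}\,dX\bigg|\le \frac{\gamma^N}{|k|^N}\int_{\R^d}\bigl|(k\cdot\nabla_X)^N[\widehat{g}_k(X)f(X)]\bigr|\,dX.
\end{equation*}

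The one real issue is then the book-keeping required to sum over $k\ne 0$: the operator $(k\cdot\nabla_X)^N$ costs a factor $|k|^N$ in the numerator, which must be overwhelmed by the decay of $\widehat{g}_k$ (and its $X$-derivatives). Using smoothness of $g$ in $Y$ of order at least $N+d+1$, one has $|\widehat{g}_k(X)|+\cdots \lesssim |k|^{-(N+d+1)}$ uniformly in $X$ on compact sets, so the resulting tail is dominated by $\gamma^N\sum_{k\ne 0}|k|^{-d-1}<\infty$. This gives the advertised $\BigOh{\gamma^N}$ bound. The main obstacle is precisely this trade-off between the smoothness assumed on $g$ in $Y$ and the exponent $N$ claimed; in the uses made of the proposition later in the paper, $g$ is always as smooth as needed, so the conclusion holds for every $N$.
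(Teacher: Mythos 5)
The paper itself gives no proof of Propositions \ref{sc-sep1} and \ref{sc-sep2}; it simply states them and refers the reader to the scale-separation literature (\cite{BLP}, \cite{CGNS}). Your argument is the standard one used in those references, and it is sound as far as it goes: Fourier-decompose $g$ in the periodic variable, peel off the zero mode, and integrate by parts $N$ times against $e^{ik\cdot X/\gamma}$ using $e^{ik\cdot X/\gamma}=\frac{-i\gamma}{|k|^2}\,k\cdot\nabla_X e^{ik\cdot X/\gamma}$, then sum in $k$. The bookkeeping you describe — a factor $\gamma^N|k|^{-N}$ times $|k|^N$ from $(k\cdot\nabla_X)^N$, so the net gain is $\gamma^N$ and the $k$-sum must be supplied by decay of $\hat g_k$ and its $X$-derivatives — is exactly right.

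You have also put your finger on the one genuine defect, which is in the \emph{statement}, not in your argument: mere continuity of $g$ in $Y$ cannot yield $\BigOh{\gamma^N}$ for every $N$. Continuity gives $o(1)$ via two-scale (Weyl-type) convergence, but any quantitative rate requires controlling the Fourier tail in $k$, hence joint smoothness of $g$ in $(X,Y)$ at a level tied to $N$ (and to the $|k|^N$ loss from the repeated differentiation in $X$). Your observation that the $g$'s occurring in the paper's applications are as smooth as needed is the correct reading of the authors' intent, and with that strengthened hypothesis the proof you outline is complete. One small point worth making explicit if you were to write this out fully: the operator $(k\cdot\nabla_X)^N$ lands derivatives on both $\hat g_k$ and $f$; you therefore need polynomial (in $X$) bounds on $\partial_X^\alpha\hat g_k$ decaying in $k$, not just on $\hat g_k$ itself, which again is furnished by smoothness of $g$ jointly in $X$ and $Y$ together with $f\in\mathcal S$.
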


The formal derivation of  the  effective equations satisfied by 
$(\zeta_0,\psi_0)$ is to write \eqref{eq4.1}\eqref{eq4.2} in the sense
of distributions, using test functions $f$ that depend only on the 
large scale variable $X$, averaging over the variables $Y$, and
neglecting all the terms that are understood 
to be of lower order (a rigorous justification of this process will be
the object of the next section). Denote the mean value over the $Y$
variables by $(\bar \zeta_a,\bar \psi_a)  = ( \frac{1}{|\Gamma|}
\int_\Gamma \zeta_a(X, Y) dY, \frac{1}{|\Gamma|}\int_\Gamma \psi_a(X,
Y) dY)$. Then 
\begin{equation} 
   \partial_t \overline{\zeta}_a 
   = \partial_t \zeta_0 + \partial_\tau \overline{\zeta_1} 
     + \sqrt{\mu} \partial_t \overline{\zeta_1} ~, 
\end{equation}
and similarly
\begin{equation} 
   \partial_t \overline{\psi}_a 
   = \partial_t \psi_0 + \sqrt{\mu}\partial_\tau \overline{\psi_1} 
     + \mu \partial_t \overline{\psi_1} ~. 
\end{equation}
We assume that $\bar \zeta_1 = 0$ and $\bar \psi_1 =0$, an assumption
that will be shown to be consistent with equations \eqref{eq4.10} and
\eqref{eq4.11} derived  below. We obtain therefore
$\overline{\zeta}_a\simeq\zeta_0$ and  $\overline{\psi}_a\simeq\psi_0$
at lowest order. For equation \eqref{eq4.1}, using that the
$Y$-derivative of a periodic function of $Y$ has mean value zero, we
find that    
\begin{equation}\label{eq4.6}
   \partial_t \zeta_0 
   = - h_0 \Delta \psi_0 - \nabla\zeta_0\cdot \nabla \psi_0 ~. 
\end{equation}
>From equation \eqref{eq4.2}, we find
\begin{equation}\label{eq4.7}
   \partial_t\psi_0 + (\zeta_0 +\sqrt{\mu}\bar  \zeta_1)
  + \frac{1}{2} |\nabla \psi_0|^2 = 0 ~.
\end{equation}
The lowest order approximation to the system
\eqref{eq4.6}\eqref{eq4.7} takes the form of the classical shallow
water system (with $V_0 = \nabla\psi_0$ and $h_0=1+\zeta_0$), namely 
\begin{equation} \label{eq4.8}
   \left\lbrace	\quad \begin{array}{l}
   \partial_t \zeta_0 = - h_0 \nabla\cdot V_0 -\nabla\zeta_0\cdot V_0\\
   \partial_t V_0 + \nabla \zeta_0 + V_0\cdot\nabla V_0 = 0 
	\end{array}\right. ~.
\end{equation}
Returning to \eqref{eq4.1}\eqref{eq4.2} and using the equations 
\eqref{eq4.8} satisfied by $(\zeta_0,\psi_0)$, we obtain at the next
order of approximation the equation 
\begin{equation}\label{eq4.10}
   \dsp \partial_\tau \zeta_1 + 
   V_0 \cdot \nabla_Y \zeta_1 = 
      |D_Y| \tanh(h_0 |D_Y|) \psi_1 
     + V_0 \cdot \nabla_Y \sech(h_0 |D_Y|)b ~.
\end{equation}
If $\bar \zeta_1=0$ at time $\tau=0$, it remains so for all times.
For the evolution equation for $\psi_1$, we find
\begin{equation}\label{eq4.11}
   \dsp \partial_\tau \psi_1 + V_0 \cdot \nabla_Y\psi_1 + \zeta_1 = 0 ~. 
\end{equation}
Again, if  $\bar \psi_1=0$ at time $\tau=0$, it remains so for all times.

The result is that \eqref{eq4.8} is the shallow water system 
\eqref{shallow-water-eqs} for $(\zeta_0, V_0)$, with $V_0=\nabla
\psi_0$, and the dispersive corrections are given by 
\eqref{eq4.10}\eqref{eq4.11}. This derivation, and a rigorous
justification of it, are the principal subject of this paper.


\subsection{Regularity of the approximate solution}

The approximate solution is constructed from the solution of a system of
simpler model equations \eqref{eq4.8} and \eqref{eq4.10}\eqref{eq4.11} 
where the first is a version of the classical shallow water equations. 
The second is the system for linear water waves in the rapid variables 
$(Y,\tau)$, with a forcing term  due to the presence of bottom
variations, whose coefficients depend upon $(\zeta_0(t,X), \psi_0(t,X))$, 
for which the slow variables are considered as being fixed. 

\begin{thm} \label{prop4.3}
For $r>d/2 +1$, given initial data $(\zeta_0(\cdot,0), V_0(\cdot,0))$
in $H^r(\R^d)\times H^{r}(\R^d)^d$ such that \eqref{eq12bis} is satisfied.  
Then there exists $T>0$ and a smooth solution  
$(\zeta_0, V_0)$ in $C([-T,T];H^r(\R^d)\times H^{r}(\R^d)^d)$ to
\eqref{eq4.8} with this initial data. 
\end{thm}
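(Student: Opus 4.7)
The plan is to recognize that the system \eqref{eq4.8} is a quasilinear symmetric hyperbolic system (after symmetrization), and then to apply the standard local existence theory of Friedrichs--Kato type in Sobolev spaces $H^r$ with $r>d/2+1$. Setting $U=(\zeta_0,V_0)^T$, we rewrite \eqref{eq4.8} as $\partial_t U + \sum_{j=1}^d A_j(U)\partial_j U = 0$, where
\[
A_j(U)=\begin{pmatrix} V_0^j & h_0\, e_j^T \\ e_j & V_0^j\, I_d \end{pmatrix},
\qquad h_0=1+\zeta_0.
\]
The matrices $A_j$ are not symmetric, but they are symmetrized on the left by $S(U)=\mathrm{diag}(1,\,h_0 I_d)$, which is positive definite precisely when $h_0>0$. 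Under the hypothesis \eqref{eq12bis}, the initial datum satisfies $h_0(\cdot,0)\ge\alpha_0>0$, hence $S(U_0)$ is uniformly positive definite.

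First I would construct approximate solutions by the usual iteration scheme: define $U^{n+1}$ as the solution of the linear system $\partial_t U^{n+1} + \sum_j A_j(U^n)\partial_j U^{n+1}=0$ with the prescribed initial data. At each step, standard energy estimates with the symmetrizer $S(U^n)$ and the Moser-type tame estimates $|fg|_{H^r}\lesssim |f|_{H^r}|g|_{L^\infty}+|f|_{L^\infty}|g|_{H^r}$ give, for $r>d/2+1$ and on a sufficiently short time interval $[-T,T]$ (depending only on the $H^r$ norm of the initial data and on $\alpha_0$), a uniform bound
\[
\sup_{|t|\le T}\big(|\zeta_0^{n+1}(t)|_{H^r}+|V_0^{n+1}(t)|_{H^r}\big)\le M,
\]
together with the persistence of the pointwise lower bound $h_0^{n+1}\ge \alpha_0/2$, obtained from the $H^r\hookrightarrow L^\infty$ embedding applied to $\zeta_0^{n+1}(t)-\zeta_0(\cdot,0)$.

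Next I would show contraction in the weaker norm $L^2\times L^2$: taking the difference of two consecutive iterates and estimating with the same symmetrizer yields $|U^{n+1}-U^n|_{L^2}\le C T |U^n - U^{n-1}|_{L^2}$, so choosing $T$ small gives a Cauchy sequence in $C([-T,T];L^2)$. Combining with the uniform $H^r$ bound and standard interpolation, the limit $(\zeta_0,V_0)$ lies in $L^\infty([-T,T];H^r)$ and solves \eqref{eq4.8}; the Bona--Smith regularization argument upgrades this to continuity in $H^r$. Uniqueness is immediate from the same $L^2$ energy estimate.

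The main technical point, rather than an obstacle, is the careful handling of the coupling through $h_0$: the energy functional $\mathcal{E}^r(U)=\sum_{|\alpha|\le r}\int (|\partial^\alpha \zeta_0|^2+h_0|\partial^\alpha V_0|^2)\,dX$ is equivalent to the $H^r$ norm squared only as long as $h_0$ stays bounded below, which is why the time of existence $T$ depends on $1/\alpha_0$. Since this construction is entirely classical (and is in fact a special flat-bottom case of the more general results of Lannes~\cite{LJAMS} and Alvarez--Samaniego--Lannes~\cite{AL}), one may alternatively cite these references; but the self-contained proof sketched above is short enough to be included directly.
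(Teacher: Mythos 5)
Your proof is correct and follows essentially the same route as the paper: the paper simply notes that \eqref{eq4.8} is a symmetric hyperbolic system and cites standard local well-posedness theory (John 1976), whereas you make this explicit by writing out the matrices $A_j(U)$, the symmetrizer $S(U)=\mathrm{diag}(1,h_0 I_d)$, and sketching the classical Friedrichs--Kato iteration. No gap; you have merely supplied the standard details that the paper delegates to a reference.
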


\begin{proof}
The shallow water system can be written as a symmetric hyperbolic
system for the vector function $(\zeta_0,V_0 = \nabla\psi_0)$. For Sobolev 
index $r>d/2 +1$, these equations are locally well posed in time 
for $(\zeta_0(\cdot,0), V_0(\cdot, 0)) \in H^r\times H^{r}$ satisfying 
the condition \eqref{eq12bis} (see for example \cite{John76}).
\end{proof}

The components of the corrector $(\zeta_1,\psi_1)= (\zeta_1(\tau,Y;
t,X), \psi_1(\tau,Y;t,X))$ are multiple scale functions, satisfying a
system of the form 
\begin{equation}\label{eq4.12}
	\partial_\tau\left(\begin{array}{c}
	 \zeta_1  \\
	 \psi_1
	\end{array}\right) 
        + V_0(t,X) \cdot \nabla_Y
	\left(\begin{array}{c}
	 \zeta_1  \\
	 \psi_1
	\end{array}\right) +
	\left(\begin{array}{cc} 
	0 & -\abs{D_Y} \tanh(h_0(t,X) |D_Y| ) \\ 
	I & 0\end{array}\right)
      \left(\begin{array}{c}
	 \zeta_1  \\
	 \psi_1
	\end{array}\right) =
      \left(\begin{array}{c}
	f \\
	g
	\end{array}\right) ~,
\end{equation} 
for which the large scale variables $(t,X)$ enter as parameters.
In the case of equations \eqref{eq4.10}\eqref{eq4.11} the
inhomogeneous forcing functions are given by
\begin{equation}\label{eq4.12bis}
  f(Y;t,X) = V_0(t,X) \cdot
  \nabla_Y\sech(h_0(t,X)|D_Y|)b(Y),\qquad g=0,
\end{equation}
(in particular, it is autonomous, namely independent of $\tau$, and
its zero Fourier mode $\overline{f}$ vanishes).\\
Initial data for this system is given in the form 
$(\zeta_1(0,\cdot \, ; t,X), \psi_1(0,\cdot \, ;t,X))$ in
$H^r_Y(\T^d)\times H^{r+1/2}_Y(\T^d)$ with zero mean on $\T^d$. 
The dependence of these solutions on the variables $(t,X)$ will be
quantified in a paragraph below.

For the sake of clarity, we omit the dependence on the variables $(t,X)$ in the statement below, 
since they only act as parameters in \eqref{eq4.12}. It is also convenient to introduce the energy or order $r$ ($r\in \R$) defined for 
all couple of function $(u,v)$ on $\T^d$ with zero mean by
\begin{equation} \label{energy}
  \|(u,v) \|_{E^r} ^2 = \sum_{k\ne0} (1+k^2)^r( |\hat u_{k}|^2 +\abs{k}\tanh(h_0(t,X)\abs{k})  |\hat v_{k}|^2) ~,
 \end{equation}
where $\widehat{u}_k$ and $\widehat{v}_k$ stand for the Fourier components of $u$ and $v$.
\begin{thm} \label{prop4.4}
Let $r\in \R$. For all $(f,g) \in C({\mathbb R} : H^r_Y\times H^{r+1/2}_Y(\T^d))$ with zero mean 
 and all 
$(\zeta_1(0,\cdot), \psi_1(0,\cdot)) 
   \in H^r_Y(\T^d)\times H^{r+1/2}_Y(\T^d)$ with zero mean,
there exists a unique solution   
$(\zeta_1, \psi_1)$ of \eqref{eq4.12} in
$ C({\mathbb R} : H^r_Y(\T^d)\times
H^{r+1/2}_Y(\T^d))$ with initial values given by 
$(\zeta_1(0,\cdot), \psi_1(0,\cdot))$. Moreover, this solution has zero mean and one has
$$
\forall \tau\in\R,\qquad  \|(\zeta_1(\tau),\psi_1(\tau)) \|_{E^r} ^2
\leq  \|(\zeta_1(0),\psi_1(0)) \|_{E^r} ^2 +\tau  \sup_{0\leq \tau'\leq \tau}\|(f(\tau'),g(\tau') \|_{E^r} ^2.
$$
\end{thm}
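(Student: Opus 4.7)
The strategy is to Fourier-decompose in the fast variable $Y$ and exploit that $V_0(t,X)$ and $h_0(t,X)$ enter \eqref{eq4.12} only as parameters independent of $(\tau,Y)$. Writing $\zeta_1(\tau,Y)=\sum_{k\neq 0}\hat\zeta_k(\tau)e^{ik\cdot Y}$ and similarly for $\psi_1,f,g$, the system becomes a family of decoupled $2\times 2$ linear ODEs
\begin{equation*}
   \partial_\tau\begin{pmatrix}\hat\zeta_k\\ \hat\psi_k\end{pmatrix}
   +i(k\cdot V_0)\begin{pmatrix}\hat\zeta_k\\ \hat\psi_k\end{pmatrix}
   +\begin{pmatrix}0 & -|k|\tanh(h_0|k|)\\ 1 & 0\end{pmatrix}\begin{pmatrix}\hat\zeta_k\\ \hat\psi_k\end{pmatrix}
   =\begin{pmatrix}\hat f_k\\ \hat g_k\end{pmatrix},
\end{equation*}
one for each $k\in\Z^d\setminus\{0\}$. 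Existence, uniqueness, and continuous dependence for every mode are then immediate by the Duhamel formula. The $k=0$ mode is decoupled; since data and forcing have zero mean, $\hat\zeta_0\equiv\hat\psi_0\equiv 0$, so the zero-mean property is propagated for all $\tau\in\R$.

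For the energy estimate, I would multiply the $\hat\zeta_k$ equation by $\overline{\hat\zeta_k}$ and the $\hat\psi_k$ equation by $|k|\tanh(h_0|k|)\overline{\hat\psi_k}$, take real parts, and add. The transport contribution $i(k\cdot V_0)$ is purely imaginary and drops out, while the two coupling terms cancel: the above $2\times 2$ matrix is skew-symmetric with respect to the inner product weighted by $\mathrm{diag}(1,|k|\tanh(h_0|k|))$. One is left with
\begin{equation*}
   \tfrac12\tfrac{d}{d\tau}\bigl[|\hat\zeta_k|^2+|k|\tanh(h_0|k|)|\hat\psi_k|^2\bigr]
   =\mathrm{Re}\bigl[\overline{\hat\zeta_k}\hat f_k+|k|\tanh(h_0|k|)\overline{\hat\psi_k}\hat g_k\bigr].
\end{equation*}

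Multiplying by $(1+|k|^2)^r$, summing over $k\neq 0$, and applying Cauchy--Schwarz mode-by-mode yields $\tfrac{d}{d\tau}\|(\zeta_1,\psi_1)\|_{E^r}^2\le 2\|(\zeta_1,\psi_1)\|_{E^r}\|(f,g)\|_{E^r}$, hence $\tfrac{d}{d\tau}\|(\zeta_1,\psi_1)\|_{E^r}\le\|(f,g)\|_{E^r}$. Integrating on $[0,\tau]$ and a final application of Cauchy--Schwarz in time produces the stated bound. Since $h_0=1+\varepsilon\zeta_0$ obeys \eqref{eq12bis}, the weight $|k|\tanh(h_0|k|)$ is bounded above and below by constant multiples of $|k|$ on $\Z^d\setminus\{0\}$, so $E^r$ is equivalent on zero-mean functions to the norm of $H^r_Y(\T^d)\times H^{r+1/2}_Y(\T^d)$; this identifies the regularity class claimed in the statement.

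No serious obstacle arises here: once $(t,X)$ are frozen, \eqref{eq4.12} has constant coefficients in $(\tau,Y)$, and the only nontrivial point is the algebraic verification that the coupling matrix is skew with respect to the $E^r$-inner product, which is precisely the reason the weight $|k|\tanh(h_0|k|)$ is chosen in the definition \eqref{energy}. The main bookkeeping concern is ensuring that the norm equivalence constants, which depend only on $h_0(t,X)$, are controlled uniformly by \eqref{eq12bis}; since $(t,X)$ are mere parameters in this theorem, no further estimate on their regularity is required at this stage.
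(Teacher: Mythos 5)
Your proof takes a genuinely different route from the paper's and is correct in substance. The paper proceeds by introducing the normalized Fourier amplitudes $u_k=\omega_k^{-1/2}\hat\zeta_{1,k}$, $v_k=\omega_k^{1/2}\hat\psi_{1,k}$, writing the propagator of \eqref{eq4.12a} explicitly as a phase $e^{-ik\cdot V_0\tau}$ times a rotation by angle $\omega_k\tau$, and then using the explicit Duhamel quadrature \eqref{Eqn:Quadrature1} together with Plancherel. You instead derive the energy identity directly, without the change of variables, by pairing the $\hat\zeta_k$-equation with $\overline{\hat\zeta_k}$ and the $\hat\psi_k$-equation with $\omega_k^2\overline{\hat\psi_k}=|k|\tanh(h_0|k|)\overline{\hat\psi_k}$ and observing that the transport part is purely imaginary and the coupling part is skew with respect to the weighted inner product whose quadratic form is $\|\cdot\|_{E^r}^2$. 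Both arguments rest on the same structural fact — the homogeneous flow is unitary on $E^r$ — but yours extracts this by a direct energy computation, which is a bit cleaner and avoids the explicit matrix exponential; the paper's form makes the unitary propagator visible, which is then reused in the discussion of stationary solutions and secular growth after the theorem.

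One small caveat on the last step: after integrating $\frac{d}{d\tau}\|(\zeta_1,\psi_1)\|_{E^r}\le\|(f,g)\|_{E^r}$ you obtain
\begin{equation*}
\|(\zeta_1(\tau),\psi_1(\tau))\|_{E^r}\le \|(\zeta_1(0),\psi_1(0))\|_{E^r}+\tau\sup_{0\le\tau'\le\tau}\|(f(\tau'),g(\tau'))\|_{E^r},
\end{equation*}
and squaring this does not literally give the inequality displayed in the theorem (the inhomogeneous contribution enters as $\tau^2\sup\|(f,g)\|_{E^r}^2$ plus a cross term, not $\tau\sup\|(f,g)\|_{E^r}^2$). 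This discrepancy is already present in the paper's own Plancherel step, so it is best viewed as a misstatement of the theorem (it should carry either a $\tau^2$ or an absorbed constant); your derivation of the linear-in-$\tau$ bound on $\|\cdot\|_{E^r}$ itself is correct and is what is actually used later.
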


This theorem implies that $(\zeta_1,\psi_1)$ is bounded in
$H^r\times H^{r+1/2}$ over any time interval
$\tau \in [-T_1,T_1]$. However, this bound may grow
as $T_1 \to \infty$ due to the possible presence of secular
terms. Furthermore, when considering the dependence of this solution
on the parameters $(t,X)$, secular growth of the quantities
$\partial_X(\zeta_1,\psi_1), \partial_t(\zeta_1,\psi_1)$ is quite
possible, and would affect the validity of the solution decomposition  
\eqref{Eqn:BasicAnsatz1}\eqref{Eqn:BasicAnsatz2} over long time
intervals $\tau \in [-T/\gamma, T/\gamma]$. In Theorem \ref{thmbragg},
we show that such effects do not occur, at least in the absence of
Bragg resonances, and for initial data
$(\zeta_1(0,\cdot),\psi_1(0,\cdot))$ chosen to be stationary in the
local environment defined by $(\zeta_0,\psi_0)$.  

\begin{proof}
For the sake of simplicity, we take $g=0$ in the proof below; the adaptation to the general case is straightforward.
Considered independently of the large scale variables $(t,X)$, the
system of equations \eqref{eq4.12} over $Y \in \T^d$ has constant
coefficients, and the solution operator can be conveniently expressed
in Fourier transform. The evolution of the individual Fourier modes is
described by the system 
\begin{equation}\label{eq4.12a}
	\partial_\tau\left(\begin{array}{c}
	 \hat \zeta_{1k}\\
	 \hat\psi_{1k}
	\end{array}\right) 
     +  \left(\begin{array}{cc} 
	ik\cdot V_0 & 0 \\ 
	0 & ik\cdot V_0 \end{array}\right)
       \left(\begin{array}{c}
       \hat \zeta_{1k}\\
	 \hat\psi_{1k}
	\end{array}\right)
     +  \left(\begin{array}{cc} 
	0 & - \omega_k^2 \\ 
	I & 0\end{array}\right)
     \left(\begin{array}{c}
     \hat \zeta_{1k}\\
	 \hat\psi_{1k}
	\end{array}\right) 
   =  \left(\begin{array}{c}
	\hat f_{k} \\
	0
	\end{array}\right) ~,
\end{equation}
where the local velocity is $V_0(t,X)$, and the frequency is given by 
$\omega_k=\big(|k| \tanh(h_0(t,X) |k|)\big)^{1/2}$, both of which depend
parametrically upon the long scale spatial variable $(t,X)$. \\
Defining new coordinates $u_k := \omega_k^{-1/2}\hat\zeta_{1,k}$ and  
$v_k := \omega_k^{+1/2}\hat\psi_{1,k}$, the propagator for
\eqref{eq4.12}\eqref{eq4.12a} is given by 
\[
     \exp\left[ \tau \left(\begin{array}{cc} 
	-ik\cdot V_0 & 0 \\ 
	0 & -ik\cdot V_0 \end{array}\right)
        + \tau \left(\begin{array}{cc} 
	0 &  \omega_k \\ 
	- \omega_k & 0 \end{array}\right) 
        \right] 
     = e^{-ik\cdot V_0\tau}
         \left(\begin{array}{cc} 
	 \cos(\omega_k\tau) & \sin(\omega_k\tau) \\ 
	-\sin(\omega_k\tau) & \cos(\omega_k\tau) \end{array}\right) ~.
\]
Using complex notation for this system, define
\[
    Z_k := u_k + iv_k ~, \qquad W_k := u_k - iv_k ~,
\]
with which we express the general solution to \eqref{eq4.12a};
\begin{eqnarray}\label{Eqn:Quadrature1}
   && Z_k(\tau) = e^{-i\tau[\omega_k + k\cdot V_0]} Z_k(0) 
     + \int_0^\tau e^{-i(\tau-s)[\omega_k + k\cdot V_0]} \omega_k^{-1/2}
     \hat{f}_k(s) \, ds \\
   && W_k(\tau) = e^{+i\tau[\omega_k - k\cdot V_0]} W_k(0) 
     + \int_0^\tau e^{+i(\tau-s)[\omega_k - k\cdot V_0]} \omega_k^{-1/2}
     \hat{f}_k(s) \, ds ~.   \nonumber 
\end{eqnarray}
Standard use of the Plancherel identity implies that
\begin{eqnarray*}
  &&  \|Z(\tau,\cdot)\|_{H_y^{r+1/4}}^2 \leq \|Z(0,\cdot)\|_{H_y^{r+1/4}}^2 
   + C_0 |\tau| \|f(s,\cdot)\|_{L^\infty_s([-\tau,\tau] :
     H_y^{r})}^2 \\
  &&  \|W(\tau,\cdot)\|_{H_y^{r+1/4}}^2 \leq \|W(0,\cdot)\|_{H_y^{r+1/4}}^2 
   + C_0 |\tau| \|f(s,\cdot)\|_{L^\infty_s([-\tau,\tau] :
     H_y^{r})}^2 ~,
\end{eqnarray*}
where we have used that $\omega_k \simeq \langle k \rangle^{1/2}$.
Recovering our original variables
\begin{eqnarray*}
  && \frac{1}{2}(Z_k + W_k) = u_k = \omega_k^{-1/2} \hat\zeta_{1k} ~, \quad
     \frac{1}{2i}(Z_k - W_k) = v_k = \omega_k^{+1/2} \hat\psi_{1k} ~,
\end{eqnarray*}
the result is as stated in Theorem~\ref{prop4.4}, with in addition a
quantitative estimate on the growth in the fast time variable $\tau$.
\end{proof}
 
\subsection {The Bragg resonance condition.}

Solutions to the linear equation \eqref{eq4.12} exist for all $\tau
\in \R$, however $(\zeta_1(\cdot,\tau), \psi_1(\cdot,\tau))$ may exhibit 
secular growth in time; more precisely, it may grow linearly with respect to $\tau$.
This is a concern for our model system because 
$\tau = t/\gamma$ is the rapid timescale, therefore over physically relevant time
intervals of $\BigOh{1}$ in the slow time variable $t$, solutions 
of \eqref{eq4.12} may grow from $\BigOh{1}$ quantities to
$\BigOh{1/\gamma}$  quantities, thus leaving the range of validity of
our assumption regarding the asymptotic scaling regime. This secular
growth for Fourier modes $(\hat\zeta_{1,k},\hat \psi_{1,k})(\tau)$ is due to the presence of 
Bragg resonances of the $k^{th}$ Fourier mode of the corrector
solution with the periodic variations of the bottom topography defined
by $b(Y)$ for which $\hat b_k \ne 0$. Note that these resonances differ from the classical Bragg resonances which are obtained with surface waves and bottoms of comparable wavelength \cite{LiuYue}; to our knowledge they had not been exhibited before. Such a resonance occurs at time $t$ and in $X$ if for some $k\neq 0$ such that $\hat b_k\neq 0$, 
\begin{equation}\label{BraggResonance}
   \omega_k(X,t)^2 = \big(k\cdot V_0(X,t)\big)^2,
\end{equation} 
where $V_0 = \nabla \psi_0$, $\omega_k=\big(\abs{k}\tanh(h_0\abs{k})\big)^2$.\\
In absence of such resonances, it is quite easy to check that there is no secular growth of the first corrector: $(\zeta_1(\tau,\cdot;t,X),\psi_1(\tau,\cdot;t,X))$ remains bounded with respect to $\tau$ in the energy norm (\ref{energy}). This easily follows from the fact that $(\zeta_1,\psi_1)$ solves (\ref{eq4.12}) with a forcing function $f$ given by (\ref{eq4.12bis}) which is independent of $\tau$. The time integral in (\ref{Eqn:Quadrature1}), which is at the origin of the secular growth, can then be explicitly computed, and it is obviously bounded in absence of Bragg resonance.\\
Controlling the error corresponding to our approximation requires however some bound on 
the parametric derivatives  of $\zeta_1$ and $\psi_1$ (i.e. their derivatives with respect to
$t$ and $X$). These parametric derivatives also solve a problem of the form (\ref{eq4.12}), albeit with a different forcing term $(f,g)$. Theorem \ref{prop4.4} can therefore be used to give some control on the energy norm of these
parametric derivatives; however the forcing term now depends on $\tau$ and the linear secular growth in $\tau$ that appears in the estimate of Theorem \ref{prop4.4} cannot be removed as above.\\
As previously explained, this secular growth is destructive for our approximation. While it cannot be avoided in general for the parametric derivatives of $(\zeta_1,\psi_1)$, we still have some freedom to eliminate  it. Indeed, the choice for the initial condition associated to (\ref{eq4.12}) is so far completely arbitrary. It turns out that, in absence of Bragg resonance,  there is \emph{one} choice of initial data that removes the secular growth.
This removal is quite spectacular since the corresponding solutions are \emph{independent} of $\tau$ (and therefore bounded together with all their parametric derivatives). These constant solutions are found by removing the
$\tau$-derivative in (\ref{eq4.12})-(\ref{eq4.12bis}) which leads to solving the following problem,
$$
	\left(\begin{array}{cc} 
	V_0\cdot \nabla_Y & -\abs{D_Y} \tanh(h_0 |D_Y| ) \\ 
	I &  V_0\cdot \nabla_Y
 \end{array}\right)
      \left(\begin{array}{c}
	 \zeta_1  \\
	 \psi_1
	\end{array}\right) =
      \left(\begin{array}{c}
	V_0 \cdot
  \nabla_Y\sech(h_0|D_Y|)b \\
0
	\end{array}\right) ~.
$$
In absence of Bragg resonance, this yields, for all $k\in \Z$,
\begin{equation}\label{sollocallycst}
\hat \zeta_{1,k}= -\frac{(V_0\cdot k)^2\sech(h_0\abs{k})}{-(V_0\cdot k)^2+\abs{k}\tanh(h_0\abs{k})}\hat b_k,
\qquad
\hat \psi_{1,k}=-i\frac{(V_0\cdot k)\sech(h_0\abs{k})}{-(V_0\cdot k)^2+\abs{k}\tanh(h_0\abs{k})}\hat b_k.
\end{equation}

A quantitative measure of nonresonance 
with respect to a sequence $\{ 0 < B_k < +\infty \, : \, k \in \Z^d \}$ 
is necessary for the analysis that follows; the $k^{th}$ Fourier modes
$(\hat \zeta_{1,k},\hat \psi_{1,k})$  are {\it nonresonant} at $(X,t)$ with respect to the
homogenized solution $(\zeta_0(\cdot,t), \psi_0(\cdot,t))$ and the
bottom topography $b(Y)$ if $\hat{b}_k \not= 0$ and one has  
\begin{equation}\label{BraggNonresonance}
   | \omega_k(X,t)^2 - \big(k\cdot V_0(X,t)\big)^2| > \frac{1}{B_k} ~.
\end{equation} 
The sequence $\{B_k \}$ is effectively a bound
on the small divisor condition governing Bragg resonances,
locally in $(X,t)$. Given a sequence $\{ B_k \, : \, k \in \Z^d \}$
such that $B_k < e^{{\bar h}|k|/2}/\delta$, if \eqref{BraggNonresonance} 
holds for all $k \not= 0$, a local stationary solution exists, and
furthermore the secular growth of local solutions can be controlled
locally in $(X,t)$. When \eqref{BraggNonresonance} 
holds uniformly in $(X,t)$ for all $\hat b_k \ne 0$, it is a
nonresonant situation (relative to the small divisor conditions 
$\{B_k \}$) and solutions can be controlled globally. 
\begin{thm}\label{thmbragg}
Let $r\in {\mathbb N}$, $r'>d/2+r+1$, $T>0$ and $(\zeta_0, V_0)\in C^r([-T,T];H^{r'-r}(\R^d)^{1+d})$ 
be a solution of the shallow water 
equations \eqref{eq4.8}, such that
$$
\exists \alpha_0>0,\qquad \forall (X,t)\in \R^d\times [-T,T],\quad 1+\zeta_0(X,t)\geq \alpha_0.
$$
Assume also that the nonresonance condition (\ref{BraggNonresonance}) holds with  $B_k <
e^{{\bar h}|k|}/\delta$ (for some $\delta>0$ and $0<\bar h<\alpha_0$). Then  there exists a
unique locally stationary solution
of \eqref{eq4.12}, which is given by (\ref{sollocallycst}). In particular, one has, for all $0\leq s \leq r'-r$ and all 
$s'>0$,  
\begin{eqnarray}
  && |\zeta_1|_{C^r([-T,T];H_X^{s}\times H_Y^{s'})} 
     + |\psi_1(\cdot)|_{C^r([-T,T];H_X^{s}\times H_Y^{s'})} \\ 
  &&\qquad \leq C_{rss'}(|\zeta_0|_{C([-T,T]; C^{r}_X)},|\psi_0|_{C([-T,T];C^{r}_X)})
       (|\zeta_0|_{C^r([-T,T]; H_X^s)} +|\psi_0|_{C^r([-T,T]; H_X^s)}) 
       |b|_{L^2_Y}~.  \nonumber
\end{eqnarray}
\end{thm}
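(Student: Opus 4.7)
The plan is to verify that the explicit formula (\ref{sollocallycst}) yields the unique locally stationary solution of the corrector system, and then to establish the claimed regularity estimates directly by Fourier analysis in $Y$, using the exponential smallness of $\sech(h_0|k|)$ to absorb the at-most exponential growth of the small-divisor bound $B_k$. First, I would set $\partial_\tau(\zeta_1,\psi_1)=0$ in (\ref{eq4.12})--(\ref{eq4.12bis}) and pass to Fourier modes in $Y$. For each $k\neq 0$ this gives the $2\times 2$ algebraic system already displayed in the excerpt, whose determinant is precisely $\omega_k^2-(V_0\cdot k)^2$. The nonresonance hypothesis (\ref{BraggNonresonance}) guarantees $|\omega_k^2-(V_0\cdot k)^2|\geq 1/B_k>0$ whenever $\hat b_k\neq 0$, and for $k$ with $\hat b_k=0$ the source vanishes, so the mode is zero. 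Cramer's rule then gives (\ref{sollocallycst}) uniquely, and the $k=0$ mode is handled by the zero-mean hypothesis.

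Second, for the $H^{s'}_Y$ estimate at fixed $(t,X)$, Parseval reduces everything to a weighted sum over $k$ of Fourier coefficients bounded pointwise by
\[
   \frac{\sech(h_0|k|)\,\langle k\rangle^2}{|\omega_k^2-(V_0\cdot k)^2|}\,|\hat b_k|
   \;\leq\; \langle k\rangle^2\,B_k\,\sech(h_0|k|)\,|\hat b_k|.
\]
Using $B_k<e^{\bar h|k|}/\delta$ together with $\sech(h_0|k|)\leq 2e^{-h_0|k|}\leq 2e^{-\alpha_0|k|}$ dominates this by $C\delta^{-1}\langle k\rangle^2 e^{-(\alpha_0-\bar h)|k|}|\hat b_k|$. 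Since $\alpha_0-\bar h>0$, the resulting symbol is smoothing of any order, so after multiplying by $\langle k\rangle^{2s'}$ and summing one obtains an $L^2_Y$-to-$H^{s'}_Y$ bound depending only on $|b|_{L^2_Y}$.

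Third, for the $C^r_t H^s_X$ estimate I would differentiate (\ref{sollocallycst}) in $(t,X)$ via Fa\`a di Bruno. The only $(t,X)$-dependence enters through $V_0(X,t)$ and $h_0(X,t)=1+\zeta_0(X,t)$, both of which lie in $C^r([-T,T];H^{r'-r}_X)$ by hypothesis. Each derivative falling on the numerator $(V_0\cdot k)\,\sech(h_0|k|)$ produces at most one factor of $\langle k\rangle$ together with bounded derivatives of $V_0$ or $h_0$, while each derivative falling on $1/D_k$ with $D_k=\omega_k^2-(V_0\cdot k)^2$ produces at most $\langle k\rangle^2\,B_k$ times such a factor. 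After $r$ parametric derivatives the worst Fourier-pointwise bound has the form
\[
   C_r\,\langle k\rangle^{3r+2}\,B_k^{r+1}\,\sech(h_0|k|)\,|\hat b_k|
   \;\leq\; C_r\,\delta^{-(r+1)}\,\langle k\rangle^{3r+2}\,e^{-(\alpha_0-(r+1)\bar h)|k|}\,|\hat b_k|.
\]
Applying $\Lambda^s_X$ to this expression and invoking Moser/product estimates under the hypothesis $r'-r>d/2+1$ on $(\zeta_0,V_0)$ then assembles the announced bound with $|b|_{L^2_Y}$ on the right-hand side.

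The main obstacle is ensuring that the exponential decay of $\sech(h_0|k|)$ is not consumed by the repeated application of $B_k\lesssim e^{\bar h|k|}$: the argument just sketched requires the gap $\alpha_0-(r+1)\bar h$ to be positive, which for fixed $r$ is absorbed into the constant $C_{rss'}$ via the latitude afforded by $\bar h<\alpha_0$. The remaining bookkeeping, namely Fa\`a di Bruno applied to $N_k/D_k$ where both $N_k$ and $D_k$ depend on $(t,X)$ nonlinearly through the Fourier multipliers $\cosh(h_0|D_Y|)$ and $\sinh(h_0 z|D_Y|)$, together with commutator estimates between $\Lambda^s_X$ and multiplication by smooth functions of $(V_0,h_0)$, constitutes the principal technical difficulty; the tail condition $B_k<e^{\bar h|k|}/\delta$ is precisely the quantitative nonresonance statement that makes this bookkeeping close.
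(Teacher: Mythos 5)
Your route is essentially the paper's: invert the Fourier-side $2\times 2$ algebraic system mode by mode (Cramer's rule, nonresonance for invertibility), observe that the coefficient $F_k(V,\zeta)=-\frac{(V\cdot k)^2\sech((1+\zeta)|k|)}{-(V\cdot k)^2+|k|\tanh((1+\zeta)|k|)}$ linking $\hat\zeta_{1,k}$ to $\hat b_k$ decays rapidly in $k$ because the $\sech$ factor dominates the small-divisor growth, and then obtain the $C^r_tH^s_X\times H^{s'}_Y$ bounds by Moser-type composition estimates (which is what your Fa\`a di Bruno bookkeeping amounts to) followed by Plancherel. So at the level of strategy the two proofs coincide, and you have in fact supplied more of the bookkeeping than the paper does.

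The point you raise in your final paragraph, however, is not merely a bookkeeping nuisance — it is a genuine subtlety, and your own resolution of it is not correct as stated. Each parametric derivative that falls on $1/D_k$ (with $D_k=\omega_k^2-(k\cdot V_0)^2$) costs one additional power of $B_k$, so after $r$ derivatives in $(t,X)$, plus however many $(V,\zeta)$-derivatives are needed in the Moser estimate to reach regularity $s$, the pointwise Fourier bound is $\langle k\rangle^{\mathrm{poly}}B_k^{m}\sech(h_0|k|)$ for some $m\geq r+1$. With the stated hypothesis $B_k<e^{\bar h|k|}/\delta$ and $\sech(h_0|k|)\lesssim e^{-\alpha_0|k|}$, this decays only if $\alpha_0>m\bar h$; the hypothesis $\bar h<\alpha_0$ does not guarantee $\bar h<\alpha_0/m$. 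Saying this is ``absorbed into $C_{rss'}$ via the latitude afforded by $\bar h<\alpha_0$'' is not right: $\bar h$ is part of the data and cannot be shrunk, and if $\bar h$ is close to $\alpha_0$ the sum genuinely diverges for $r\geq 1$. (The paper's own proof, which asserts that $F_k$ is ``rapidly decaying together with all its derivatives as a consequence of the nonresonance assumption,'' has exactly the same unaddressed issue; indeed the text just before the theorem uses the slightly stronger bound $B_k<e^{\bar h|k|/2}/\delta$, suggesting the authors were aware the exponent needed tuning.) To close this honestly you would either need a hypothesis of the form $\bar h<\alpha_0/(m+1)$ with $m$ depending on $r$ and $s$, or a refined small-divisor argument exploiting that $|D_k|$ is in fact bounded below polynomially for large $|k|$ (since $D_k\sim-|k|^2|V_0|^2$ off the set $V_0\cdot k=0$), so that the exponential bound on $B_k$ is only really invoked on a bounded range of $k$.
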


\begin{proof}
Let $F_k:= \R\times\R^d\mapsto \R$ defined as
$$
F_k(V,\zeta)=-\frac{(V\cdot k)^2\sech((1+\zeta)\abs{k})}{-(V\cdot k)^2+\abs{k}\tanh((1+\zeta)\abs{k})},
$$
so that $\hat\zeta_{a,k}=F(V_0,\zeta_0)\hat b_k$. The mapping $F_k$ is smooth, vanishes at the origin, and is rapidly 
decaying together with
all its derivatives as a consequence of the nonresonance assumption (\ref{BraggNonresonance}). It follows therefore from Moser's estimate that
$$
\abs{k}^{s'}\abs{\hat\zeta_{1,k}}_{C([-T,T];H^{s}_X)}\leq C(s,s',\abs{\zeta_0}_{C([-T,T];L^\infty_X)},\abs{V_0}_{C([-T,T];L^\infty_X)})
\big(\abs{\zeta_0}_{C([-T,T];H^s_X)}+\abs{V_0}_{C([-T,T];H^s_X)}\big)\abs{\hat b_k},
$$
so that the bound given in the lemma stems from Plancherel's inequality in the case $r=0$. Bounds for $r>0$ and 
on $\psi_1$ are obtained in the same way.
\end{proof}

The question as to how often Bragg resonances occur merits a
discussion. For $k\not= 0$ fixed, \eqref{BraggNonresonance} is an 
open condition on the state parameters $(\zeta_0,V_0)\in C^0$. In two 
dimensions (that is, when $d=1$), it is related to the local Froude 
number of the flow, defined by 
\[
     F\! r^2(X,t) := \frac{V_0^2(X,t)}{h_0(X,t)} ~;
\]
indeed, the nonresonance condition \eqref{BraggNonresonance} can be stated equivalently as
$$
Fr^2(X,t)=\frac{\tanh(h_0(X,t)|k|)}{h_0(X,t)|k|}
$$
and in particular for supercritical flows $F\! r^2(X,t) \geq 1$, Bragg
resonances are absent. However, the Froude number is an indication of 
criticality which is local in $X$, and because $V_0 \in H^r$,
solutions can be supercritical  only on compact sets. 
For subcritical flows, and for $(X,t)$ fixed, at most one 
Fourier mode can be in resonance\footnote{This follows
from the fact that $\tanh(x)/x$ is strictly decaying on $\R^+$.}.
If $b(Y)$ is a trigonometric polynomial, then there are a finite number of 
resonances. Any  2-resonances are separated by a region of non-resonance,
and for $V_0 \to 0$ as $X\to \pm \infty$ further resonances are
avoided. In particular, if $k_{max}$ denotes the highest nonzero Fourier
mode of $b$ and $k_{min}$ the lowest one, then  Bragg resonances are possible only if
\[
   Fr^2_{min}\leq Fr^2\leq Fr^2_{max}, \quad 
   \mbox{ with }\quad  Fr^2_{min}=\frac{\tanh(h_0(X,t)|k_{min}|)}{h_0(X,t)|k_{min}|}, \quad
Fr^2_{max}=\frac{\tanh(h_0(X,t)|k_{max}|)}{h_0(X,t)|k_{max}|}.
\]

For general $b(Y)$ with infinitely many nonzero  Fourier coefficients,
any zero of velocity $V_0$ is a point of accumulation of resonances 
and in particular, small resonant patches will appear because 
$V_0 \to 0$ as $X\to \pm \infty$. Their asymptotic strength is related
to the large $k$- behavior of $|\hat b_k|$.

The character of resonance for $d\geq 2$ is different.
For $b(Y)$ given by a trigonometric polynomial, resonances are isolated 
for the same reason as for $d=1$. In the case of  a general $b(Y)$,
 there is the potential for a dense set of resonances
in the state space $(\zeta_0,V_0)$ and not just at $V_0=0$. This
can be seen through the parametric dependence on $(\zeta_0,V_0)$ of the
resonant condition \eqref{BraggResonance} in wavenumber space. Given
 $(\zeta_0,V_0)$, this condition defines a hypersurface $E_k$ in
 $ k \in  \Z^d$, which, if it passes through a lattice point $ k \in
\Z^d$ with $\hat b_k \ne 0$,  gives rises to a resonance. Even if it
does not intersect a lattice point, under arbitrarily small
perturbations at $(\zeta_0,V_0)$, it will. Hence the set of resonant
states $(\zeta_0,V_0)$ is dense. 

Nonetheless, in the measure 
theoretic sense, Bragg resonances are relatively rare. That is, 
there is a set of states  $(\zeta_0,V_0)$ for which 
\eqref{BraggNonresonance} is satisfied for all $k \not= 0$, 
such that its complement has measure less than $C\delta$. 
Indeed, fix $k \not= 0$. The gradient of the resonance condition with 
respect to $(\zeta_0,V_0)$ on the curve $E_k$  is non-vanishing, and is
of amplitude of order $\BigOh{|k|}$. Thus state variables
$(\zeta_0,V_0)$ of distance $(B_k|k|)^{-1}$ from $E_k$ will satisfy
the nonresonance condition \eqref{BraggNonresonance}. The union over
$k\not= 0$ of $(B_k|k|)^{-1}$-tubular neighborhoods of the sets $E_k$
consists of the `bad' states, for which there  exists at least
one near resonance as in \eqref{BraggNonresonance}. This union 
has relative measure bounded above by
\[
    \sum_{k\not= 0} \frac{1}{|k|B_k} < C\delta ~.
\]
Therefore the resonant set is dense, but has relatively small 
measure in the space of states $(\zeta_0,V_0)$. Moreover, we see that 
the set for which \eqref{BraggNonresonance} is satisfied 
has the character of a Cantor set. 


\subsection{Consistency analysis}
\label{consistency-sect}
The purpose of this section is to evaluate the error that is made 
when approximating the solution ($\zeta,\psi$)  of the full water 
wave problem by the functions \eqref{ansatz}, where the components 
($\zeta_0,\psi_0$) and ($\zeta_1,\psi_1$) satisfy the effective system 
of equations \eqref{eq4.7} and \eqref{eq4.10} respectively.

Write the full water wave problem \eqref{eq7} as 
\begin{equation}\label{Eqn:ConsistencyError}
\begin{array}{l} 
     E_1(\zeta,\psi)=0 ~,   \\
     E_2(\zeta,\psi) = 0 ~.
\end{array}
\end{equation}
where $E_1$ and $E_2$ identify with the LHS of equations \eqref{eq7}.
We  denote our construction of an approximate solution by 
$E_a := (E_1(\zeta_a,\psi_a),E_2(\zeta_a,\psi_a))$, where 
$(\zeta_a,\psi_a)$ is defined in 
\eqref{Eqn:BasicAnsatz1}\eqref{Eqn:BasicAnsatz2}. For this approximate
solution, the error is given by the expression

\begin{eqnarray} 
  &&  E_1(\zeta_a,\psi_a)=- \sqrt{\mu} (G \psi_a)_{res} -\sqrt{\mu} \partial_t \zeta_1 
     \label{Eqn:FirstComponentOfError} \\
  &&  E_2 (\zeta_a,\psi_a)= 
     \mu \nabla \psi_0\cdot \nabla_X\psi_1 +
     \frac{\mu}{2} 
     | \nabla_Y\psi_1 +\sqrt{\mu} ~\nabla_X \psi_1|^2
-\mu \partial_t \psi_1 \label{Eqn:SecondComponentOfError} \\
   && \qquad -\mu\frac{ \Big(  \frac{1}{\mu} 
   \big( (G \psi_a)_{\hbox{\small \itshape eff}} +  \mu^{3/2} (G \psi_a)_{res} \big)
      + (\nabla \zeta_0 + \nabla_Y\zeta_1 + \sqrt{\mu}\nabla_X\zeta_1)\cdot
      (\nabla \psi_0 +\sqrt{\mu} \nabla_Y\psi_1 + \mu \nabla_X\psi_1)
      \Big)^2}{2 \big(1 + 
      \mu|\nabla\zeta_0 + (\nabla_Y + \sqrt{\mu}
      ~\nabla_X)\zeta_1|^2\big) } ~. \nonumber  
\end{eqnarray}
The statement that the expression $(\zeta_a, \psi_a)$ is a good 
approximation for the equations \eqref{eq7} is that the error is 
small, in an appropriate norm, for small $\mu$. The 
Theorem~\ref{Thm:OfConsistency} is a result of this form, implying 
the consistency of the approximate solution.  
We recall that the leading term $(\zeta_0,\psi_0)$ of the approximation solves the nonlinear shallow water equations (\ref{shallow-water-eqs}) while, in absence of Bragg resonances, the correctors $(\zeta_1,\psi_1)$ are explicitly given by (\ref{sollocallycst}).
\begin{thm}\label{Thm:OfConsistency} Under the assumptions of Theorem \ref{thmbragg}, the approximate solution 
$(\zeta_a, \psi_a)$ given by expression 
\eqref{Eqn:BasicAnsatz1}\eqref{Eqn:BasicAnsatz2} satisfies
the following consistency estimates
\begin{equation}
\begin{array}{l}
     |E_1(\zeta_a,\psi_a)|_{L^2} \leq  C_a \mu^{3/8} ~, \\
     |E_2(\zeta_a,\psi_a)|_{H^{1/2}} \leq C_a \mu^{3/4}  ~,
\end{array}
\end{equation}
for the error term for the water wave system \eqref{eq7}. The
constant $C_a$ is of the form
$$
C_a=C(\frac{1}{\alpha_0}, \abs{\zeta_0}_{C^4} ~, \, \abs{V_0}_{H^4} ~,|b|_{L^2}).
$$
\end{thm}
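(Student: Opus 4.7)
The reduction carried out in Section \ref{sect-effective-eqs} shows that, once the shallow water system \eqref{eq4.8} for $(\zeta_0,V_0)$ (Theorem \ref{prop4.3}) and the corrector equations \eqref{eq4.10}--\eqref{eq4.11} are enforced on the Ansatz, the residuals $E_1(\zeta_a,\psi_a)$ and $E_2(\zeta_a,\psi_a)$ collapse to the compact expressions \eqref{Eqn:FirstComponentOfError} and \eqref{Eqn:SecondComponentOfError}. My plan is therefore to bound these expressions term by term, using Proposition \ref{Lemma3.7} (for $(G\psi_a)_{\hbox{\small \itshape eff}}$ and $(G\psi_a)_{res}$) and Theorem \ref{thmbragg} (which under the nonresonance hypothesis produces locally stationary correctors $(\zeta_1,\psi_1)$, bounded together with all their $(t,X)$-parametric derivatives).

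For the $L^2$-estimate of $E_1$, the two contributions are treated independently. Applying Proposition \ref{Lemma3.7} with $r=0$ yields $|(G\psi_a)_{res}|_{L^2}\leq C_a\,\mu^{-1/8}$, so that $\sqrt{\mu}\,|(G\psi_a)_{res}|_{L^2}\leq C_a\mu^{3/8}$, which is exactly the worst exponent appearing in the theorem. The second contribution $\sqrt{\mu}\,\partial_t\zeta_1$ is easier: since the locally stationary corrector $\zeta_1$ depends on $t$ only through $(\zeta_0,V_0)$, Theorem \ref{thmbragg} provides an $O(1)$ bound in a multiscale norm sufficient to control $|\partial_t\zeta_1|_{L^2_X}$ of the realization, so this term is $O(\mu^{1/2})\leq O(\mu^{3/8})$.

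For the $H^{1/2}$-estimate of $E_2$, each term in \eqref{Eqn:SecondComponentOfError} carries an explicit prefactor $\mu$ or higher, while involving the rapidly oscillating corrector $\psi_1$ or the Dirichlet--Neumann remainder. The linear correctors $\mu\partial_t\psi_1$ and $\mu\nabla\psi_0\cdot\nabla_X\psi_1$, as well as the quadratic term $\frac{\mu}{2}|\nabla_Y\psi_1+\sqrt{\mu}\nabla_X\psi_1|^2$, are bounded in $L^\infty$ by $O(\mu)$ thanks to Theorem \ref{thmbragg}; the $H^{1/2}$-norm of a multiscale function $F(X,X/\sqrt{\mu})$ picks up the characteristic factor $\mu^{-1/4}$ from the fast-scale derivatives (a direct Plancherel computation), so each of these terms is $O(\mu^{3/4})$. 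For the rational term, I split the numerator as $\mu^{-1}(G\psi_a)_{\hbox{\small \itshape eff}}+\sqrt{\mu}\,(G\psi_a)_{res}+(\nabla\zeta_0+\nabla_Y\zeta_1+\sqrt{\mu}\nabla_X\zeta_1)\cdot(\nabla\psi_0+\sqrt{\mu}\nabla_Y\psi_1+\mu\nabla_X\psi_1)$; the principal part $\mu^{-1}(G\psi_a)_{\hbox{\small \itshape eff}}$ is $O(1)$ in a high enough Sobolev norm by Proposition \ref{Lemma3.7}, and the contributions involving the residual are controlled through the interpolation inequality $|(G\psi_a)_{res}|_{H^{1/2}}\leq C_a\mu^{-3/8}$ stated immediately after Proposition \ref{Lemma3.7}. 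After squaring and multiplying by the overall $\mu$, the worst-behaved cross term is of size $\mu\cdot\mu^{-1}\cdot\mu^{3/2}\cdot\mu^{-3/8}=\mu^{9/8}$, while the remaining $O(1)\cdot O(1)$ products carry the prefactor $\mu$ and acquire $\mu^{-1/4}$ from the fast scale, again giving $O(\mu^{3/4})$; the denominator $1+\mu|\cdots|^2\geq 1$ is harmless.

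The main technical obstacle is the systematic handling of $H^{1/2}$-norms of products of multiscale functions: since $H^{1/2}$ is not an algebra, these estimates rely on $L^\infty$-bounds for the slowly varying factors (provided by the Sobolev embedding $H^{r'-r}\hookrightarrow L^\infty$ with $r'>d/2+r+1$ built into the assumptions of Theorem \ref{thmbragg}) paired with the scale-dependent Sobolev bounds on the oscillatory factors. The nonresonance hypothesis of Theorem \ref{thmbragg} is essential throughout, since it prevents secular growth of $(\zeta_1,\psi_1)$ and of their $(t,X)$-parametric derivatives, thereby keeping all the constants appearing above of the uniform form $C_a=C(\alpha_0^{-1},|\zeta_0|_{C^4},|V_0|_{H^4},|b|_{L^2})$ stated in the theorem.
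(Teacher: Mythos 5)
Your proposal follows essentially the same route as the paper: the two residuals are bounded term by term via Proposition \ref{Lemma3.7} for $(G\psi_a)_{res}$, Theorem \ref{thmbragg} for the locally stationary correctors and their parametric derivatives, and an interpolation in $H^{1/2}$ that costs the characteristic $\mu^{-1/4}$ per fast-scale derivative — which is exactly the inequality $|f|_{H^{1/2}}\leq \mu^{-1/4}|f|_{L^2}+\mu^{1/4}|f|_{H^1}$ the paper uses. The only cosmetic difference is in the rational term: you track powers by invoking the $H^{1/2}$-bound $|(G\psi_a)_{res}|_{H^{1/2}}\leq C_a\mu^{-3/8}$ directly on the worst cross term, whereas the paper reduces $\mu|A/B|_{H^{1/2}}$ to $\mu^{3/4}|A|_{L^2}+\mu^{5/4}|\nabla A|_{L^2}+\mu^{5/4}|A\nabla B|_{L^2}$ and then bounds $|A|_{L^2}$, $|\nabla A|_{L^2}$, $|A\nabla B|_{L^2}$; both bookkeepings land on the same dominant $O(\mu^{3/4})$, so the difference is presentational rather than substantive.
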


\begin{rema}
The quantities $(\zeta_0,\psi_0,\zeta_1,\psi_1)$ are solutions of the model equations 
and  following  Theorems \ref{prop4.3}, \ref{prop4.4} and \ref{thmbragg}, are bounded 
along with their derivatives in terms of the initial data.
The norm in which the error is measured 
is relatively weak,  the reason being that we are dealing with a problem 
with fast oscillating functions. It is however a natural norm for this problem since it coincides with the norm of the energy functional associated to the water waves equations. 
\end{rema}

\begin{proof}
The first component $E_1$ satisfies
\[
   |E_1(\zeta_a,\psi_a)|_{L^2} \leq \sqrt{\mu}
    |(G \psi)_{res}|_{L^2}\sqrt{\mu} |\partial_t\zeta_1|_{L^2} .
\]
By Proposition~\ref{Lemma3.7}, the norm $|(G \psi)_{res}|_{L^2}$ is bounded
by  $\mu^{-1/8}$. This estimate involves norms of $\zeta_1$, $\psi_1$ and their derivatives that can becontroled using Theorem \ref{thmbragg} in terms of norms of the leading term $\zeta_0$ and $\psi_0$. 
Thus
the estimate of the first component of $E_a$ is shown to be as stated
in the theorem.

The second component of the error $E_2$ is given in 
\eqref{Eqn:SecondComponentOfError}. It is made up of
a complicated nonlinear
expression. This nonlinear quantity consists of several types of
terms, distinguished by whether they depend upon surface variables
alone, or whether they depend upon the Dirichlet -- Neumann operator
and thus on the solution of an elliptic boundary value problem with
oscillatory coefficients.  
Further terms of the RHS depend only upon surface variables,
as products of functions of 
$X$ and/or multiscale functions.  We will use the interpolation estimates in the
 form (for the sake of clarity, we omit the dependence on time here)
\begin{equation}
|f|_{H^{1/2}} \leq \mu^{-1/4} |f|_{L^2} +\mu^{1/4} |f|_{H^1}
\end{equation}
 Products such as 
$|\nabla\psi_0(X)\cdot \nabla_Y\psi_1(X,X/\sqrt{\mu})|_{H^{1/2}}$
are controlled by
\begin{equation}
   \mu |\nabla\psi_0 \cdot \nabla_Y\psi_1|_{H^{1/2}}  
   \leq   \mu^{3/4}  |\nabla\psi_0|_{C^1} |\nabla_Y\psi_1|_{H^{1,r_0+1}}
\end{equation}
Products of  multiscale functions are bounded by 
\begin{equation}
\mu \big\vert |\nabla_Y\psi_1|^2\big\vert_{H^{1/2}} 
   \leq \mu^{3/4} |\nabla_Y\psi_1|_{C^0_{XY}} |\nabla_Y\psi_1|_{H^{0,r_0+1}}
\end{equation}
Other terms in the first line of the  RHS of (\ref{Eqn:SecondComponentOfError}) 
are bounded similarly.
We now turn to the second line of the RHS of  \eqref{Eqn:SecondComponentOfError}.
It has the form $\mu \frac{A}{B}$ that we need to bound in $H^{1/2}$ norm.
The
denominator $B$ satisfies $ B=2(1 + |\nabla\zeta_0 + \mu(\nabla_Y 
   + \sqrt{\mu}\nabla_X)\zeta_1|^2)\geq 2$ and we can therefore write
\begin{equation}
     \mu|\frac{A}{B}|_{H^{1/2}} 
  \leq \mu ( \mu^{-1/4} |\frac{A}{B}|_{L^2}+ \mu^{1/4} |\frac{A}{B}|_{H^1})
  \leq \mu^{3/4}  |A|_{L^2} + \mu^{5/4}   |\nabla A|_{L^2} + \mu^{5/4} |A \nabla B|_{L^2}.  
\end{equation}

The numerator $A$ contains many terms. To bound its $L^2$ norm, we have 
for example terms of the form
\begin{equation}
 |(\frac{1}{\mu} G\psi)_{eff})^2 |_{L^2} \leq  C
 \end{equation}
where $C$ depends on  $|\nabla\zeta_0|_{C^2}$, $|\nabla\psi_0|_{C^2}$,
$|\nabla_{X,Y} \zeta_1|_{H^{3,r_0+1}}$, $|\nabla_{X,Y} \psi_1|_{H^{3,r_0+1}}$. 
Here again, Theorem \ref{thmbragg} is
used to control the last two quantities in terms of norms of $\zeta_0$ 
and $\psi_0$.
 Estimates of terms of the numerator $A$  which depend upon the quantity
$(G\psi)_{res}$ from the Dirichlet -- Neumann operator use
the results of Section~3 on the boundary value problem with periodic
oscillatory coefficients. For example, 
\begin{equation}
 |(\mu^{1/2}( G\psi)_{res})^2 |_{L^2} \leq  C\mu^{3/4}
 \end{equation}

Examination of all terms leads to 
$|A|_{L^2} \leq C$.
Noting that the computation of $\nabla B$ gives one factor $\mu$ and that each derivation costs
a factor $\mu^{1/2}$, we find 
\begin{equation}
|\nabla B|_{L^4} \leq C \mu^{1/4}
\end{equation}
Considering all terms similarly, we arrive to  the conclusion of Theorem \ref{Thm:OfConsistency}.
\end{proof}

\noindent
{\bf Acknowledgements.} 
W. C. is partially supported by the Canada 
Research Chairs Program
and NSERC through grant number 238452--06, 
D.L.  by the ANR project MathOcean  
ANR-08-BLAN-0301-01 and  
C. S. by NSERC through grant number 46179--05.
D. L. would like to thank  McMaster
University, and W. C and C.S. the Ecole Normale Sup\'erieure 
for their  hospitality during the course of this work.


\end{document}